\newcommand{\mic}[1]{\textcolor{violet}{#1}}
\newcommand{\comment}[1]{}
    \newcommand{\set}[1]{{\left\{#1\right\}}}
\newcommand{\pa}[1]{{\left(#1\right)}}
\newcommand{\sq}[1]{{\left[#1\right]}}
\newcommand{\abs}[1]{{\left|#1\right|}}
\newcommand{\norm}[1]{{\left \|#1\right \|}}
\newcommand{\T}{\mathbb{T}}
\newcommand{\Z}{\mathbb{Z}}
\newcommand{\R}{\mathbb{R}}
\newcommand{\C}{\mathbb{C}}
\newcommand{\teta}{\theta}
\newcommand{\dg}{{\mathtt{D}_\g}}
\newcommand{\im}{I}
\newcommand{\na}{\widehat{n}}
\newcommand{\id}{\operatorname{id}}
\newcommand{\ad}{\operatorname{ad}}
   \newtheorem*{thm*}{Theorem}
\newtheorem{thm}{Theorem}[section]
\newtheorem{cor}[thm]{Corollary}
\newtheorem{prop}[thm]{Proposition}
\newtheorem{lemma}[thm]{Lemma}
\newtheorem{rmk}[thm]{Remark}
\newtheorem{ex}[thm]{Example}
\theoremstyle{definition}
\newtheorem{defn}[thm]{Definition}
\newcommand{\g}{\gamma}
\newcommand{\s}{{\sigma}}
\newcommand{\wtZ}{\widetilde{Z}}
\def\wc{ {}}
\newcommand{\om}{{\omega}}
\newcommand{\N}{{\mathbb N}}
\newcommand{\cC}{{\mathcal C}}
\newcommand{\cF}{{\mathcal F}}
\newcommand{\cG}{{\mathcal G}}
\newcommand{\cH}{{\mathcal H}}
\newcommand{\cK}{{\mathcal K}}
\newcommand{\cM}{{\mathcal M}}
\newcommand{\cR}{{\mathcal R}}
\newcommand{\td}{{\mathtt{d}}}
\newcommand{\tn}{{\mathtt{n}}}
\newcommand{\tr}{{\mathtt{r}}}
\newcommand{\tC}{{\mathtt{C}}}
\newcommand{\tD}{{\mathtt{D}}}
\newcommand{\tK}{{\mathtt{K}}}
\newcommand{\tN}{{\mathtt{N}}}
\newcommand{\be}{{\bf e}}
\newcommand{\bu}{{\bf u}}
\newcommand{\al}{{\alpha}}
\newcommand{\bt}{{\beta}}
\renewcommand{\im}{{\rm i}}
\newcommand{\jap}[1]{\langle #1 \rangle}
\newcommand{\und}[1]{\underline{#1}}
\newcommand{\e}{{\varepsilon}}
\renewcommand{\th}{{\mathtt{h}}}
\newcommand{\nnorm}[1]{{\left\vert\kern-0.25ex\left\vert\kern-0.25ex\left\vert #1 
    \right\vert\kern-0.25ex\right\vert\kern-0.25ex\right\vert}}
\newcommand{\bcoeffu}[1]{#1_{\bal^{1},\bbt^{1}}}
\newcommand{\bcoeffd}[1]{#1_{\bal^{2},\bbt^{2}}}
\newcommand{\buu}{{u^\bal}{\bar{u}^\bbt}}
\newcommand{\es}{e^{\set{S,\cdot}}}
\newcommand{\bal}{{ \al}}
\newcommand{\bbt}{{ \bt}}
\newcommand{\baluno}{\bal^{(1)}}
\newcommand{\baldue}{\bal^{(2)}}
\newcommand{\bbtuno}{\bbt^{(1)}}
\newcommand{\bbtdue}{\bbt^{(2)}}
\newcommand{\suca}{\mathtt N}
\newcommand{\ri}{r}
\newcommand{\rs}{{r^*}}
\newcommand{\Cuno}{{\mathcal C_1}}
\newcommand{\rp}[1]{Proposition~\ref{#1}}
\definecolor{indianyellow}{rgb}{0.89, 0.66, 0.34}
\definecolor{goldenpoppy}{rgb}{0.99, 0.76, 0.0}
\definecolor{gamboge}{rgb}{0.89, 0.61, 0.06}
\definecolor{frenchbeige}{rgb}{0.65, 0.48, 0.36}
\definecolor{burntorange}{rgb}{0.8, 0.33, 0.0}
\author{Michela Procesi$^{\dag}$}
\address{Department of Mathematics and Physics,
	Roma 3}
\email{procesi@mat.uniroma3.it}
\author{Laurent Stolovitch$^{\dag\dag}$}
\address{CNRS and Laboratoire J.-A. Dieudonn\'e
	U.M.R. 7351, Universit\'e C\^ote d'Azur, Parc Valrose
	06108 Nice Cedex 02, France}
\email{stolo@unice.fr}
\thanks{ $^{\dag\dag}$Research of L. Stolovitch was supported by the French government, through the UCAJEDI	Investments in the Future project managed by the National Research Agency (ANR) with the reference	number ANR-15-IDEX-01.}
\begin{document}

 \title{About linearization of infinite-dimensional Hamiltonian systems}

\begin{abstract}
This article is concerned with analytic Hamiltonian dynamical systems in infinite dimension in a neighborhood of an elliptic fixed point. Given a quadratic Hamiltonian, we consider the set of its analytic higher order perturbations. We first define the subset of elements which are formally symplectically conjugacted to a (formal) Birkhoff normal form. We  prove that if the quadratic Hamiltonian satisfies a Diophantine-like condition and if such a perturbation is formally symplectically conjugated to the quadratic Hamiltonian, then it is also analytically symplectically conjugated to it. Of course what is an analytic symplectic change of variables depends strongly on the choice of the phase space. Here we  work on periodic functions with Gevrey regularity.
\end{abstract} 
\maketitle
\setcounter{tocdepth}{2} 

\section{Introduction}\label{intro}
In finite dimension, studying the behaviour of the orbits of a vector field (or of diffeomorphism) nearby a fixed point is a fundamental and classical problem. The very first natural step into this understanding is to compare the dynamical system with its linearization at the fixed point. This is done by trying to transform the dynamical system into its linear part by a change of coordinates. There are formal obstructions to do so, called resonances. Hence, in general, one can merely expect the dynamical system to be transformed into a {\it normal form}, that is supposed to capture effect the very nonlinearities, through a formal change of coordinates. It was understood by the end of the 19th century that if the convex hull of the eigenvalues of the linear part does not contain the origin (one says then that the linear part is in the "Poincaré domain"), and if an higher order analytic perturbation is formally conjugate to the linear part, then it is also analytically so. When the linear part does not satisfy this property, then one has so-called "small divisors" that may forbid the transformation to be analytic. It was a major step forward made by C.L. Siegel \cite{siegel}, followed by H. R\"ussmann \cite{russmann-ihes}(for diffeomorphisms) and by A.D. Brjuno \cite{bruno} (for vector fields) who devised a sufficient "small divisors condition" ensuring the analycity of a linearizing transformation as soon as there exists a formal one. 
Linearizing (resp. Normalizing) problems for diffeomorphisms  were devised by J. P\"oschel \cite{Posch} and for commuting families by the second author \cite{stolo-diffeos} (resp. \cite{Stolo-ihes}). 
By the end of the 70's, it became clear to few people that some PDE's problems could be translated into an infinite dimensional dynamical systems to which one would have tried to apply methods of finite dimension. In particular, we mention the work by E. Zehnder \cite{zehnder-infinite} and V. Nikolenko \cite{Nik} who gave results similar to finite dimensional ones. It happens that the "small divisors condition" they required are too strong and are rarely satisfied. Furthermore, in general, the notion of formal normal form and formal change of variables should be clarified (for instance  if one defines formal polynomials and formal power series it is not in general true that this space has a Poisson algebra structure). Neverteless, in some very peculiar situation, this problem can be handled\cite{stolo-bambusi}. 
\\
Starting from the mid 80', there has been a  lot of interest {in studying long time behavior of solutions of PDEs. For those PDEs which can be considerered as Hamiltonian (infinite dimensional) dynamical systems related to a symplectic sturucture, one natural way to proceed is to prove the existence of finite dimensional invariant tori in the phase space. This usually implies the existence of quasi-periodic solutions, which are defined for all time. Lot of progresses has been done on the problem of extending KAM theory to PDEs. This circle of problems are very related, though distinct, to the ones solved in this article. Indeed, here one considers a dynamical system close to an elliptic  fixed point with the purpose of conjugating it to its most simple {\it normal form}~: its linear part at the fixed point. On the other hand, in KAM theory, one looks for the existence of a { \it finite dimensional  invariant  flat torus} on which the dynamics is the linear translation by a diophantine frequency. There is by now a wide literature dealing the subject related to semilinear PDEs, starting from \cite{Kuk,Po,KP, W,CW}, (for instance, see  \cite{EK,GYX, PPBumi, BKM18,Y} for more recent treatments). It has been early understood that these results might be seen through elaborated versions of  "Nash-Moser" theorem see for instance \cite{Bo98,BB3,BCP,CM}. 
	We finally mention \cite{FGPr, BBHM,BM,FG} for the case of fully-nonlinear PDEs. See also \cite{BMP:almost,CY} and references therein for infinite-dimensional tori. }

Birkhoff normal form (BNF) methods have been used in order to prove long time existence results and control of Sobolev norms for many classes of evolution PDEs close to an elliptic fixed point. Loosely speaking the point is    to  canonically transform $H$ into a Hamiltonian Normal form which depends only on the actions  plus a remainder term whose the Taylor polynomial, at the origin is  of degree $\tN+1 $. If one achieves this  then initial data which are $\delta$-small (with respect to the norm on the phase space) stay small (in the same norm) for times of order $\delta^{-\suca}$. A more precise formulation is given in the {\it Strategy section }below. Of course in the infinite dimensional setting this stability time depends strongly on the choice of the phase space as well as on the nature of the non-linear terms. A further problem is that in general it is not obvious that one can perform even one step of this procedure,  indeed the generating function of the desired change of variables is a {\it formal} polynomial which in infinite dimension is not necessarily analytic. This is a particularly difficult problem in the case of PDEs with derivatives in the nonlinearity.
\\
Let us briefly describe some of the literature.  Regarding applications to PDEs (and particularly the NLS) the first results were given in \cite{Bourgain:1996} by Bourgain, who proved that for any $\suca$ there exists $p=p(\suca)$  such that small initial data 
in the $H^{p'+p}$ norm stay small  in the $H^{p'}$ norm, for times of order $\delta^{-\suca}$. 
Afterwards, Bambusi in \cite{Bambu:1999b} proved that superanalytic initial data stay small in analytic norm for subexponentially long times.
Following the strategy proposed in \cite{Bambusi:2003} for the Klein-Gordon equation Bambusi and Gr\'ebert in \cite{Bambusi-Grebert:2003} first considered  NLS equations  on  $\T^d$  and then, in   \cite{Bambusi-Grebert:2006}, proved 
polynomial bounds for a class of {\it tame-modulus} PDEs.
Similar results were also proved for the Klein Gordon equation on tori and Zoll manifolds in \cite{DS0},\cite{DS},\cite{BDGS}. Successively Faou and Gr\'ebert  in \cite{Faou-Grebert:2013} considered the case of analytic initial data and proved subexponential bounds on the stability time
for classes of NLS equations in $\T^d$. In \cite{BMP18} the first author with Biasco and Massetti studied an abstract Birkhoff normal form on {\it sequence spaces} proving subexponential stability times for Gevrey regular initial data. A similar result was proved in \cite{Cong}. 
 An interesting feature of the last three papers is that instead on relying on tameness properties they use the fact that  the equations they study have some symmetries, namely they are gauge and translation invariant (actually in \cite{BMP18} the translation invariance condition is weakened).
\\
All the preceding results regard semilinear PDEs.
Regarding equations with derivatives in the nonlinearity, the first results were in \cite{Yuan-Zhang} for the semilinear case. Then we mention \cite{Delort-2009,DS15}  for the Klein-Gordon equation, \cite{Berti-Delort} for the water waves and \cite{FI} for the reversible NLS equation. Recently, Feola and Iandoli, \cite{FI2} prove polynomial lower bounds for the stability times of Hamiltonian NLS equations with two derivatives in the nonlinearity. In the context of infinite chains with a finite range coupling, similar considerations can be done and we mention \cite{BenFroGior}.
\subsection{Statements}
We study  Hamiltonians on infinite dimensional sequence spaces, which are higher  order ({\it $M$-regular}) analytic perturbations of quadratic Hamiltonians nearby an elliptic  fixed point (i.e a zero) and satisfying the {\it Momentum conservation} property, namely they are formally translation invariant,  see Definition \ref{mconserv}. 
\\
 We first show that the space $\cF$  of formal Hamiltonians in infinite variables $u=\pa{u_j}_{j\in \Z}$ satisfying this {\it Momentum conservation property} is well defined and  closed w.r.t Poisson brackets, then we define a scaling degree (which is the homogeneity degree minus two, see Definition \ref{scialla}, so that the degree of the Poisson bracket of two functions is the sum of the respective degrees) so that $\cF$ has a natural filtered Lie algebra structure. Thus $\cF$ is decomposed in homogeneous components $\cF^d$  and we define  $\cF^{\ge d}:= \widehat{\oplus}_{h\ge d}\cF^{h}$.
 \\
Given a  rationally independent $\omega\in \R^\Z$,  namely such that all non-trivial finite rational combinations of $\omega$ are non zero, we consider the affine space $D_\omega +\cF^{\ge 1}$ of formal Hamiltonians of the form 
 \begin{equation}
 \label{lequazione}
 H= D_\omega + P \,,\quad D_\omega= \sum_{j\in \Z} \omega_j |u_j|^2\,,\qquad P= O(u^3)\,,
 \end{equation}
  and acting on this space we define the group of formal symplectic (i.e canonical) transformations $e^{\set{\cF^{\ge 1},\cdot}}$.
  Finally we define the space of normal forms as those formal Hamiltonians which Poisson commute with $D_\omega$. We prove the following
  \begin{thm*}
All  Hamiltonians $H$ as above  are formally symplectically conjugated  to normal form. Moreover the normal form Hamiltonian associated to $H$ is unique.
 \end{thm*}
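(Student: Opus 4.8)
The plan is the standard Birkhoff normal form iteration carried out degree by degree along the filtration of $\cF$. Since everything here is formal, no small-divisor estimate is needed; the only points to monitor are that each elementary operation stays inside $\cF$ and that the relevant series converge in the filtered sense. The engine is the cohomological operator $\ad_{D_\omega}=\set{D_\omega,\cdot}$. As $D_\omega=\sum_{j}\omega_j|u_j|^2$, this operator is diagonal on monomials, sending $u^\alpha\bar u^\beta$ to $\im\,\omega\cdot(\beta-\alpha)\,u^\alpha\bar u^\beta$ (up to the convention-dependent constant). Since $\alpha-\beta$ is a finitely supported integer vector and $\omega$ is rationally independent, $\omega\cdot(\alpha-\beta)=0$ forces $\alpha=\beta$; hence on each homogeneous component $\cF^d$ the kernel of $\ad_{D_\omega}$ is exactly the subspace $\cN^d\subset\cF^d$ of monomials with $\alpha=\beta$, i.e. the degree-$d$ part of the space of normal forms. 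Writing $\Pi_{\cN}$ for the projection onto $\cN$ and $\Pi_{\cR}:=\id-\Pi_{\cN}$, the operator $\ad_{D_\omega}$ is invertible on $\Pi_{\cR}\cF^d$ with right inverse $\ad_{D_\omega}^{-1}$ obtained by dividing coefficients by the nonzero scalars $\im\,\omega\cdot(\beta-\alpha)$; this manifestly preserves both the momentum condition and the homogeneity, so $\ad_{D_\omega}^{-1}\colon\Pi_{\cR}\cF^d\to\Pi_{\cR}\cF^d$. Two further algebraic facts will be used: $\cN$ is a Lie subalgebra of $\cF$ (by the Jacobi identity, since $\ad_{D_\omega}$ annihilates it), and moreover $\cN$ is \emph{abelian}, because any two functions of the actions $|u_j|^2$ Poisson-commute.

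\emph{Existence.} Start from $H=D_\omega+P$, $P\in\cF^{\ge1}$, and build the transformation degree by degree. Assume inductively that $\Phi_{<d}\in e^{\set{\cF^{\ge1},\cdot}}$ has been produced with $\Phi_{<d}H=D_\omega+Z_{<d}+P_{\ge d}$, where $Z_{<d}\in\bigoplus_{1\le h<d}\cN^h$ is in normal form and $P_{\ge d}\in\cF^{\ge d}$ (for $d=1$ take $\Phi_{<1}=\id$, $Z_{<1}=0$, $P_{\ge1}=P$). Let $P^{(d)}\in\cF^d$ be the degree-$d$ component of $P_{\ge d}$, put $\chi^{(d)}:=\ad_{D_\omega}^{-1}\Pi_{\cR}P^{(d)}\in\cF^d\subset\cF^{\ge1}$ and $\Phi_d:=e^{\set{\chi^{(d)},\cdot}}$. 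Because $\ad_{\chi^{(d)}}$ strictly raises the scaling degree (by $d\ge1$), the Lie series $e^{\ad_{\chi^{(d)}}}=\sum_{k\ge0}\tfrac1{k!}\ad_{\chi^{(d)}}^{k}$ is well defined on $\cF$ in the filtered sense, and collecting terms gives
\begin{align*}
\Phi_d\Phi_{<d}H
&= D_\omega+Z_{<d}+\set{\chi^{(d)},D_\omega}+P^{(d)}+(\text{degree}>d)\\
&= D_\omega+Z_{<d}+\Pi_{\cN}P^{(d)}+(\text{degree}>d),
\end{align*}
where we used $\set{\chi^{(d)},D_\omega}=-\ad_{D_\omega}\chi^{(d)}=-\Pi_{\cR}P^{(d)}$. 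Setting $Z^{(d)}:=\Pi_{\cN}P^{(d)}$, $Z_{<d+1}:=Z_{<d}+Z^{(d)}$ and $\Phi_{<d+1}:=\Phi_d\Phi_{<d}$ advances the induction, the new remainder $\Phi_{<d+1}H-D_\omega-Z_{<d+1}$ lying in $\cF^{\ge d+1}$. The infinite composition $\Phi:=\lim_{d\to\infty}\Phi_{<d}$ lies in $e^{\set{\cF^{\ge1},\cdot}}$ --- on any fixed degree only finitely many factors act nontrivially, since $\Phi_d$ differs from $\id$ only in degrees $\ge d$ --- and $\Phi H=D_\omega+Z$ with $Z:=\sum_{d\ge1}Z^{(d)}$ in normal form. (Equivalently one assembles a single generator $\chi\in\cF^{\ge1}$ via the Baker--Campbell--Hausdorff series, which converges in the filtered setting.)

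\emph{Uniqueness.} Let $\Phi_1H=D_\omega+Z_1$ and $\Phi_2H=D_\omega+Z_2$ with $Z_1,Z_2$ in normal form, and write $\Psi:=\Phi_2\circ\Phi_1^{-1}=e^{\set{g,\cdot}}$, $g=\sum_{h\ge1}g^{(h)}\in\cF^{\ge1}$, so $\Psi(D_\omega+Z_1)=D_\omega+Z_2$. We prove by induction on $d\ge1$ that $Z_1^{(h)}=Z_2^{(h)}$ and $g^{(h)}\in\cN^h$ for all $h\le d$. Granting this for $h<d$, extract the degree-$d$ component of $e^{\ad_g}(D_\omega+Z_1)$: a term $\ad_{g^{(h_1)}}\!\cdots\ad_{g^{(h_k)}}D_\omega$ with $h_1+\dots+h_k=d$, $k\ge1$, either has $k=1$, $h_1=d$ (contributing $\set{g^{(d)},D_\omega}$) or has $h_k<d$, whence its innermost bracket $\ad_{g^{(h_k)}}D_\omega=-\ad_{D_\omega}g^{(h_k)}=0$ since $g^{(h_k)}\in\cN$; and a term $\ad_{g^{(h_1)}}\!\cdots\ad_{g^{(h_k)}}Z_1^{(m)}$ with $h_1+\dots+h_k+m=d$, $k\ge1$, $m\ge1$, has all $h_i<d$ and $m<d$, hence is an iterated bracket of elements of the abelian algebra $\cN$ and vanishes. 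Therefore the degree-$d$ component of $\Psi(D_\omega+Z_1)$ equals $Z_1^{(d)}+\set{g^{(d)},D_\omega}=Z_1^{(d)}-\ad_{D_\omega}g^{(d)}$, and this must equal the degree-$d$ component of $D_\omega+Z_2$, namely $Z_2^{(d)}$. Applying $\Pi_{\cN}$ (which kills $\ad_{D_\omega}g^{(d)}$) yields $Z_2^{(d)}=Z_1^{(d)}$; applying $\Pi_{\cR}$ yields $\ad_{D_\omega}g^{(d)}=0$, hence $g^{(d)}\in\cN^d$. The induction closes and $Z_1=Z_2$.

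\emph{On the main difficulty.} At the level of ideas the proof is a routine telescoping Birkhoff argument; its real substance is carried by the structural facts established earlier in the paper --- that $\cF$ is a filtered Poisson (Lie) algebra in spite of being infinite-dimensional, that $e^{\set{\cF^{\ge1},\cdot}}$ is a genuine group, and that formal Lie series and infinite compositions converge in the filtered sense while remaining inside $\cF$ --- together with the bookkeeping that $\Pi_{\cN},\Pi_{\cR}$ and $\ad_{D_\omega}^{-1}$ do not leave $\cF$. Within the argument itself, the one ingredient that is essential rather than cosmetic is that $\cN$ is not merely a subalgebra but \emph{abelian}: this is precisely what forces the lower-order corrections in the uniqueness induction to vanish, and without it the normal form would not be unique (post-composition with the time-one flow of an element of $\cN^{\ge1}$ would alter it while preserving the normal form property).
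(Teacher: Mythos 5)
Your proof is correct and follows essentially the same route as the paper: solve the homological equation for the non-resonant part using rational independence of $\omega$, iterate using the filtered Lie algebra structure to make sense of the Lie series and the infinite composition, and prove uniqueness by a degree-by-degree induction exploiting that the kernel of $\ad_{D_\omega}$ is exactly the abelian algebra of functions of the actions. The only (immaterial for the formal statement) difference is bookkeeping: you normalize one scaling degree at a time, whereas the paper removes the entire resonant part $\Pi^{\cR}H_i$ at each step, gaining two degrees per iteration (and, in the linearizable case, a quadratic gain that it later needs for the analytic convergence).
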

Having properly developed the formal framework, we consider the question of formal vs. analytic linearization in the infinite dimensional setting on the phase space of Gevrey regular functions.
\\
In order to keep technical difficulties to a minimum, we work on
Nonlinear Schr\"odinger like Hamiltonians of the form
with the standard symplectic structure on $\ell_2= \ell_2(\Z,\C)$. As phase space we consider the sequences of {\it Gevrey}  regularity, namely
we  consider the weighted space
\begin{equation}
\label{gevrey}
\th_{s,p,\theta} :=\set{ u\in \ell^2(\Z,\C): \quad |u|^2_s:=\sum_{j\in\Z}\jap{j}^{2p}e^{2s\jap{j}^\theta}|u_j|^2<\infty   }
\end{equation}
where $\jap{j}:=\max(|j|,1)$, $s>0$, $p\geq \frac12$ and $0<\theta< 1$. Then, given $r>0$, we consider the space of {\it M-regular Hamiltonians} $P\in \cH_r(\th_{s,p,\theta})$, such that
the Cauchy majorant of the map $u\to X_P(u)$ is analytic from the ball $B_r (\th_{s,p,\theta})$, centered at the origin and of radius $r$ into $\th_{s,p,\theta}$.
\\
Now we consider a Hamiltonian as in \eqref{lequazione}, with the additional condition that $P\in \cH_{r_0}(\th_{s_0,p,\theta})$ and the 
 frequency $\omega$ is ``Diophantine" in the following  sense introduced by Bourgain \cite{Bourgain:2005}.
 We set
 \begin{equation}
 \Omega_{}:=\set{\omega=\pa{\omega_j}_{j\in \Z}\in \R^\Z,\quad \sup_j|\omega_j-j^2| < 1/2 }
 \end{equation}
 \begin{defn} Given $\gamma>0$ , we  denote by $\dg$ the set  of \sl{Diophantine} frequencies 
 	\begin{equation}\label{diofantinoBIS}
 	\dg:=\set{\omega\in \Omega_{}\,:\;	|\omega\cdot \ell|> \gamma \prod_{n\in \Z}\frac{1}{(1+|\ell_n|^{2} \jap{n}^{2})}\,,\quad \forall \ell\in \Z^\Z_f\setminus \{0\}}.
 	\end{equation} 
 \end{defn}
 The map $\pa{\omega_j}_{j\in\Z}\to  \pa{j^2-\omega_j}_{j\in\Z}$ identifies $\Omega$ with $[-1/2,1/2]^\Z$. Hence we  endow $\Omega$ with the product topology and with the corresponding  probability measure. With respect to such measure Diophantine frequencies are typical, namely $\Omega\setminus\tD_\g$ has measure {proportionally bounded by $\g$ (see \cite{BMP18}[Lemma 4.1])}.
 \\
  Then  we prove~:
\begin{thm*}
 	{\it If $H$ is formally conjugated to $D_\omega$, then there exists $r_1<r_0$,  $s_1> s_0$ and a close to identity analytic symplectic change of variables 
 $
 \Psi: B_{r_1}(\th_{s_1,p,\theta}) \to \th_{s_1,p,\theta}
 $
 such that $H\circ \Psi = \sum_{j\in \Z} \omega_j |u_j|^2$.}
\end{thm*}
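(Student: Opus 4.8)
\textit{Overall strategy.} Since the target is the pure quadratic Hamiltonian $D_\omega$, the natural plan is a quadratically convergent (Newton) normalization scheme: at each step one kills a whole block of nonlinear orders of $P$ by a close-to-identity symplectic map $e^{\set{S_\nu,\cdot}}$, paying a controlled loss of the analyticity radius and of the Gevrey exponent, so that the infinite composition converges to the sought $\Psi$.

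\textit{One normalization step.} Write $H=D_\omega+P$ as in \eqref{lequazione}; after the rescaling $u\mapsto\lambda u$ with $\lambda$ small (which turns $P=O(u^3)$ into an arbitrarily small perturbation on a fixed ball) start with $P\in\cH_{r}(\th_{s,p,\theta})$ of small norm. Given $H_\nu=D_\omega+P_\nu$ with $P_\nu\in\cF^{\ge m_\nu}$, look for $S_\nu\in\cF^{\ge 1}$ with $e^{\set{S_\nu,\cdot}}H_\nu=D_\omega+P_{\nu+1}$, $P_{\nu+1}\in\cF^{\ge m_{\nu+1}}$, $m_{\nu+1}>m_\nu$. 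Expanding in the monomials $u^\alpha\bar u^\beta$ allowed by momentum conservation ($\sum_j j(\alpha_j-\beta_j)=0$), the homological equation in the degrees being removed reads $\set{D_\omega,S_\nu}+P_\nu=Z_\nu$, where $Z_\nu$ is the $\ker\ad_{D_\omega}$-component, i.e.\ the span of the monomials with $\alpha=\beta$. Here the hypothesis is essential: each $H_\nu$ is a formal symplectic transform of $H$, hence still formally conjugate to $D_\omega$, so by uniqueness of the formal normal form (the formal normalization result stated above) $Z_\nu$ equals the corresponding block of the normal form of $H_\nu$, namely of $D_\omega$ — which has no nonlinear part. Thus $Z_\nu=0$, the homological equation is solvable with no leftover resonant term, and $S_\nu$ is obtained by dividing the coefficient of each monomial by $\im\,\omega\cdot(\alpha-\beta)$.

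\textit{Small divisors versus the Gevrey weight.} This is the quantitative core. For $\ell=\alpha-\beta\neq 0$ and $\omega\in\dg$, \eqref{diofantinoBIS} gives $|\omega\cdot\ell|^{-1}\le\gamma^{-1}\prod_n(1+|\ell_n|^2\langle n\rangle^2)\le\gamma^{-1}\exp\bigl(2\sum_n\log(1+|\ell_n|\langle n\rangle)\bigr)\le\gamma^{-1}\exp\bigl(2C_\theta\sum_n|\ell_n|\langle n\rangle^\theta\bigr)$, using $\log(1+xy)\le C_\theta\,x\,y^\theta$ for $x,y\ge 1$, which holds because $xy^\theta\ge(xy)^\theta$ and $\sup_{t\ge1}\log(1+t)/t^\theta<\infty$. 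On the other hand the exponential weight in \eqref{gevrey} assigns to $u^\alpha\bar u^\beta$ a factor $\exp\bigl(s\sum_j(\alpha_j+\beta_j)\langle j\rangle^\theta\bigr)\ge\exp\bigl(s\sum_j|\ell_j|\langle j\rangle^\theta\bigr)$, so dividing by $\omega\cdot\ell$ costs at most the \emph{fixed} shift $s\mapsto s-2C_\theta$ of the Gevrey exponent. Combined with the algebra and tame estimates for $\cH_{r}(\th_{s,p,\theta})$ (controlling $X_{S_\nu}$, its time-one flow, and the new Poisson brackets) and the standard geometric loss of the radius, this should give $\|S_\nu\|\lesssim\gamma^{-1}\|P_\nu\|$ and $\|P_{\nu+1}\|\lesssim\gamma^{-2}\|P_\nu\|^{2}$ in contracted parameters, hence a geometrically decaying $\eps_\nu:=\|P_\nu\|$ and a convergent product $\Psi:=\lim_\nu e^{\set{S_1,\cdot}}\circ\cdots\circ e^{\set{S_\nu,\cdot}}$, analytic and symplectic on $B_{r_1}(\th_{s_1,p,\theta})$, with $H\circ\Psi=D_\omega$.

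\textit{The main difficulty.} The Gevrey weight absorbs the Bourgain small divisors only up to the \emph{fixed} shift $2C_\theta$, so a naive step-by-step accounting of these shifts would consume an infinite amount of regularity over the (necessarily infinite) iteration. The heart of the proof is to arrange the bookkeeping so that the total loss stays finite: one should exploit that the part of $P_\nu$ involving a mode of frequency $>K$ has, in a slightly smaller Gevrey class, norm exponentially small in $K^\theta$, so that ultraviolet cutoffs $K_\nu\to\infty$ may be inserted at negligible cost and the ``expensive'' divisors act only on a controlled set of frequencies; equivalently, one may estimate the whole formal conjugacy at once by a majorant-series (Siegel–Brjuno type) argument and show — this is exactly what Bourgain's form of the Diophantine condition \eqref{diofantinoBIS} is designed for — that the products of small divisors entering the order-$d$ component of $S$ do not accumulate, being bounded by $C^{d}\exp\bigl(C'\sum_n|\ell_n|\langle n\rangle^\theta\bigr)$ with $C,C'$ independent of $d$. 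I expect this small-divisor accumulation estimate, rather than the formal algebra of $\cF$ or the tame nonlinear estimates, to be the principal obstacle.
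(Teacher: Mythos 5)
Your skeleton is the paper's: a super-convergent iteration in which the order of $P_\nu$ doubles at each step, the resonant part $Z_\nu$ vanishes because formal linearizability is preserved under the conjugations and the formal normal form is unique (this is Corollary \ref{keypoint} and Lemma \ref{Birlinform}), and the homological equation is solved by dividing by $\im\,\omega\cdot(\al-\bt)$ with $\omega\in\dg$. The gap is exactly where you place it yourself, and neither of the two alternatives you sketch closes it. Your divisor bound $|\omega\cdot\ell|^{-1}\le\gamma^{-1}\exp\pa{2C_\theta\sum_n|\ell_n|\jap{n}^\theta}$ forces a Gevrey shift of \emph{fixed} size $2C_\theta$ at every step, which is fatal over an infinite iteration; the ultraviolet-cutoff idea is not developed, and the Brjuno-type product bound $C^{d}\exp\pa{C'\sum_n|\ell_n|\jap{n}^\theta}$ you ask for is of the very same non-summable form, so it would not help. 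The paper's resolution is the sharp homological estimate of Lemma \ref{estim-cohom}: for \emph{every} $\s>0$ the loss is only $\s$ in the Gevrey index, at the price of a multiplicative constant $e^{\Cuno\s^{-3/\theta}}$. This rests on the combinatorial Lemma \ref{constance 2 gen}, a consequence of momentum conservation alone, which gives
\[
\sum_i|\al_i-\bt_i|\jap{i}^{\theta/2}\le C_*\pa{\sum_i(\al_i+\bt_i)\jap{i}^\theta-2\jap{j}^\theta}
\]
with exponent $\theta/2$ on the left: the polynomial divisor $\prod_i\pa{1+(\al_i-\bt_i)^2\jap{i}^2}$ is then dominated by $e^{\frac{\s}{C_*}\sum_i|\al_i-\bt_i|\jap{i}^{\theta/2}}$ for all but the finitely many indices with $\jap{i}$ below a threshold $i_\sharp(\s)\sim(\s^{-1}\ln\s^{-1})^{2/\theta}$, and those exceptional indices contribute only the constant $e^{\Cuno\s^{-3/\theta}}$.

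With that lemma the bookkeeping closes: one takes summable losses $\s_k=s_0/(\tC\jap{k}^2)$, $\rho_k=r_0/(2\tC\jap{k}^2)$, so $s_k\to 2s_0$ and $r_k\to r_0/2$, and the constants $e^{\cC_2(s_0)k^{6/\theta}}$ generated at step $k$ are absorbed by the super-exponential decay $\|P_k\|_{r_k,s_k}\le\g\e_0 e^{-\chi^k}$ with $1<\chi<2$. Note also that $P_{k+1}$ contains the term $\Pi^{\ge 2^{k+1}}\Pi^\cK P_k$, which is \emph{linear} in $P_k$; it is made super-exponentially small not by any Newton quadraticity but by the scaling Lemma \ref{gasteropode}, via $(r_{k+1}/r_k)^{2^{k+1}}\le(1-\tfrac{1}{2\tC k^2})^{2^{k+1}}$, i.e.\ by playing the doubling of the formal order against the shrinking radius — a mechanism your bound $\|P_{\nu+1}\|\lesssim\gamma^{-2}\|P_\nu\|^{2}$ glosses over. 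In short: correct architecture, but the quantitative heart of the proof (arbitrarily small Gevrey loss per homological equation, obtained from momentum conservation through the $\theta/2$ versus $\theta$ exponent gap) is missing, and the substitutes you propose would not supply it.
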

\subsection{Strategy}
In order to describe our strategy consider a finite dimensional Hamiltonian system with a non-degenerate  elliptic fixed point, which in the standard complex symplectic coordinates
$u_j= \frac{1}{\sqrt2}(q_j+ \im p_j)$ 
is described by the Hamiltonian 
\begin{equation}
\label{finito}
H= \sum_{j=1}^n \omega_j |u_j|^2 + O(u^3)\,,\quad \mbox{where $\omega_j\in \R$ are the {\it linear frequencies}}.
\end{equation}
Here if the frequencies $\omega$ are rationally independent, then one can perform the so-called Birkhoff normal form procedure: for $\suca\geq 1$ 
Hamiltonian \eqref{finito} is transformed into 
\begin{equation}
\label{finitobirk}
\sum_{j=1}^n \omega_j |u_j|^2 +Z +R\,
\,,
\end{equation}
where $Z$ depends only on the actions $(|u_i|^2)_{i=1}^n$ while $ R= O(|u|^{\suca +3})$ has a zero of order at least $\suca + 3$ in $\abs{u}$. 
At each step, the generating function of the change of variables is a polynomial, so it is analytic and generates a flow in a sufficiently small ball $B_\delta$ around the origin.
It is well known that this procedure generically diverges in $\suca$, but assuming that  $\omega$ is appropriately non resonant, say diophantine\footnote{A vector $\omega\in\R^n$ is called diophantine when it is badly approximated by rationals, i.e. it satisfies, for some $\gamma,\tau>0$, $\abs{k\cdot \omega} \ge \gamma \abs{k}^{-\tau},\quad \forall k\in\Z^n\setminus \set{0}\,$.} one can control $R$ and hence find  $\suca=\suca(\delta)$ which minimizes the size of the remainder $R$. It can be shown that it is bounded by an exponentially flat function of $\delta$, of order related to $\tau$ (for a general treatment, see instance, \cite{lombardi-nf,stolo-lombardi}). This phenomenon is also related to Nekhoroshev kind of result \cite{Posch-Nekho,BGG85,Nekho,Nid,BCG}. 
\\
If $H$ in \eqref{finito} is "formally linearizable", namely there exists a formal symplectic change of variables which conjugates $H$ to $\sum_{j=1}^n \omega_j |u_j|^2$, and $\omega$ is Diophantine, then at each step of the procedure described above, \bu{we find} $Z=0$ and one can prove convergence.
In order to apply this general scheme in the infinite dimensional setting we first discuss the BNF procedure at the level of  formal power series. Here the fundamental difference w.r.t. the finite dimensional case is that even polynomials can be just formal power series, so it is not a priori obvious that the space 
of  formal power series is well defined and has a Poisson algebra structure (which coincides with the usual one on finite dimensional subspaces).
As a simple example consider the formal  power series $ H= \sum_j u_j$, then 
\[
\{H,\bar H\}= \sum_i\sum_j \{u_j\,,\bar u_i\}=\infty\,.
\]
 We show  that for translation invariant formal Hamiltonians the Poisson brackets are well defined (see also \cite{FGP} ), and that formal Hamiltonians are a filtered Lie algebra with respect to a {\it scaling degree}.  Then we define a group of formal symplectic changes of variables, and prove our BNF result. In order to define our changes of variables and prove the group structure we strongly rely on the properties of the {\it scaling degree} as well as on the Baker Campbell Hausdorf formula.
 \\
 Then we restrict to functions on the sequence space $\th_{s,p,\theta}$, introduce the space of regular Hamiltonians and state the main relevant properties. All properties were proved in \cite{BMP18} in the more restrictive case of Gauge invariant Hamiltonians, so we follow the same strategy; for completeness we give all the proofs in the appendix.
 One we have all the basic properties needed to perform Birkhoff Normal Form, proving that formal linearizability implies analytic linearizability becomes a relatively straightforward induction.


\section{ Formal Birkhoff  Normal on sequence spaces}
As usual given a vector $k\in \Z^\Z$, $|k|:=\sum_{j\in\Z}|k_j|$. We denote $\N^\Z_f$ to be the set of  finitely supported sequences of non negative integers, similarly for $\Z^\Z_f$. If $j\in\mathbb{Z}$ then $\be_j\in \Z^\Z_f$ denotes the vector the $j$-coordinate of which is  $1$, while the others are zero.
\begin{defn}[Formal power series]\label{Hr}
	We consider the space $\cF$ of formal  power series expansions  
 	in $u\in \C^\Z$:
	$$ 
	H(u)  = \sum_{\substack{\bal,\bbt\in\N^\Z_f} }H_{\bal,\bbt}u^\bal \bar u^\bbt\,,
	\qquad u\in \C^\Z,\quad
	u^\bal:=\prod_{j\in\Z}u_j^{\bal_j} \quad |v|: = \sum_i|v_i|
	$$ 
	with the following properties: 
	\begin{enumerate}
		\item $H_{0,0}= 0$, $H_{\be_0,0}= H_{0,\be_0}=0$ 
		\item Reality condition:
		\begin{equation}\label{real}
			H_{\bal,\bbt}= \overline{ H}_{\bbt,\bal}\,;
		\end{equation}
		\item Momentum conservation:
		\begin{equation}
			H_{\bal,\bbt}= 0 \quad\mbox{if}\;\, \pi(\bal,\bbt):= \sum_{j\in\Z} j(\bal_j-\bbt_j)\ne 0 \label{mconserv}
		\end{equation}
	\end{enumerate}
\begin{rmk}
	The condition (3) means that the formal Hamiltonian is invariant w.r.t. the symmetry  $u_j\to e^{  \im j\tau} u_j$, $\tau\in\mathbb{R}$.
\end{rmk}

We shall denote 
\[
\cM:=\{(\bal,\bbt) \in \N^\Z_f:\qquad \pi(\bal,\bbt)=0\}
\]
so that $H\in \cF$ can be written as
\[
\sum_{(\bal,\bbt)\in \cM}H_{\bal,\bbt}u^\bal \bar u^\bbt
\] 

	Finally we define 
	\begin{equation}\label{nocciolina}
		\cK:=\left\{Z\in \cF\, : \, Z(u) = \!\!\sum_{\substack{\bal\in \N^\Z_f}} Z_{\bal,\bal}|u|^{2\bal}\right\}\,,\qquad \cR:=\left\{R\in \cF\, : \, R(u) = \!\!\!\!\!\!\!\!\sum_{\bal,\bbt\in \cM:\, \bal\ne \bbt}\!\!\!\!\!\!\!\! R_{\bal,\bbt}u^\bal\bar u^\bbt\right\}
	\end{equation}
	and we can decompose $\cF= \cK\oplus \cR$  as each element of $\cF$ can uniquely be expressed in term of monomials the coefficients of which is either zero or not zero. 
\end{defn}
\begin{defn}[ scaling degree]\label{scialla}
	For $d\in \N$, we denote by $\cF^d\subset \cF$ the vector space of homogeneous formal polynomials of degree $\td+2$, and define 
\[
	\cF^{\le d}= \oplus_{h\le d} \cF^{h}\,,\quad \cF^{>d}:=\widehat\oplus_{h>d} \cF^{h} \,,\quad  \cF^{\ge d}:= \cF^{>d}\oplus \cF^{d}\,,\cF =\cF^{\le d}\oplus\cF^{> d},\dots
	\]
		We define the projections  associated to these direct sum decompositions
	\[
	\Pi^{(\td)} H= \sum_{|\al|+|\bt|=\td+2}H_{\al,\bt} u^\al\bar u^\bt \,,\quad \Pi^{(> \td)} H= \sum_{|\al|+|\bt|> \td+2}H_{\al,\bt} u^\al\bar u^\bt\,,\dots
	\]
	Elements of $\cF^{\geq d}$ (resp. $\cF^{> d}$) are said to be {\it of scaling order} $\geq d+2$ (resp. $>d+2$). In the sequel, for simplicity, we shall just say that an element of $f\in \cF^{\geq d}$ is of "order d" and we shall say that $f$ is "exactly of order d" if it has a non vanishing component $\Pi^{(\td))}f$ in $\cF^d$.
	Finally we define
	\[
	\Pi^\cK H =  \sum_{\al}H_{\al,\al} |u|^{2\al}\,,\quad \Pi^\cR H = \sum_{\al\ne \bt}H_{\al,\bt} u^\al\bar u^\bt.
	\]
	We denote by $\cK^d:= \cF^d \cap \cK$  and similarly for $\cR$ and $\ge \td ,\le \td$.
	Note that $\cF= \widehat \oplus_d \cF^d$.
\end{defn}
\begin{rmk}
Of course, since we are in infinite dimension, even if  the $\cF^d$ are homogeneous  they are  only formal polynomials.  However if we restrict to monomials $u^\bal\bar{u}^\bbt$ with $|\bal_j|+|\bbt_j| =0$ for all $j>N$  we are working on the usual space of polynomials on which we have the standard symplectic structure $\im \sum_{j\le N} d u_j\wedge d \bar u_j$. We now  show that such structure extends to $\cF$.
\end{rmk}
	\begin{prop}\label{degree decompositionbis}
		The following Formula \eqref{well} is well defined and  endows $\cF$ with a  Poisson algebra structure  which is a filtered  Lie algebra w.r.t. the $\cF^{\geq d}$'s. 
		\begin{equation}\label{well}
			\set{F,G} := \im\sum_{  (\bal^{(i)},\bbt^{(i))}\in\cM }  \bcoeffu{F}\bcoeffd{G} \sum_j \pa{\baluno_j \bbtdue_j  - \bbtuno_j \baldue_j }u^{\baluno + \baldue -\be_j}\bar{u}^{\bbtuno + \bbtdue - \be_j}
		\end{equation}
		\end{prop}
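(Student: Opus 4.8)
The plan is to verify in order: (i) that the series \eqref{well} is well defined, i.e.\ for each pair of multi-indices $(\bal,\bbt)$ only finitely many terms of the double sum contribute to the coefficient of $u^\bal\bar u^\bbt$; (ii) that the bracket lands in $\cF$ (reality and momentum conservation), respects the filtration, and in fact maps $\cF^{d_1}\times\cF^{d_2}$ into $\cF^{d_1+d_2}$; (iii) that it is bilinear, antisymmetric, satisfies the Leibniz rule, and obeys the Jacobi identity, so that $(\cF,\{\cdot,\cdot\})$ is a Poisson/Lie algebra; (iv) that on the finite-dimensional truncations it reduces to the standard bracket coming from $\im\sum_j du_j\wedge d\bar u_j$. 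The main obstacle is step (i): in infinite dimensions the naive bracket $\sum_j \partial_{u_j}F\,\partial_{\bar u_j}G$ is a genuinely infinite sum of monomials, and one must show the momentum constraint tames it.

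For step (i) I would fix $(\bal,\bbt)\in\cM$ and look for which $(\baluno,\bbtuno,\baldue,\bbtdue,j)$ produce the monomial $u^\bal\bar u^\bbt$ in \eqref{well}. The constraints are $\baluno+\baldue-\be_j=\bal$ and $\bbtuno+\bbtdue-\be_j=\bbt$, together with $\pi(\baluno,\bbtuno)=\pi(\baldue,\bbtdue)=0$. From the first two relations, once $j$ and the ``splitting'' of $\bal+\be_j$ into $\baluno+\baldue$ (and of $\bbt+\be_j$ into $\bbtuno+\bbtdue$) are chosen, everything is determined; there are only finitely many ways to split two \emph{finitely supported} non-negative integer vectors, and $j$ must lie in the (finite) union of the supports of $\bal$ and $\bbt$ \emph{unless} $\baluno_j=\baldue_j=\bbtuno_j=\bbtdue_j$ forces a cancellation — but the antisymmetric factor $\baluno_j\bbtdue_j-\bbtuno_j\baldue_j$ kills exactly the terms where $j$ is outside $\mathrm{supp}(\bal)\cup\mathrm{supp}(\bbt)$ and both $F$- and $G$-monomials are ``diagonal at $j$'', so only finitely many genuinely nonzero contributions survive. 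This is where momentum conservation is essential: it guarantees $F$ and $G$ have coefficients supported on $\cM$, and combined with finite support it closes the argument. (I expect this to be the technical heart; the formal computation above in the excerpt, $\{\sum_j u_j,\sum_i \bar u_i\}=\infty$, is precisely the obstruction that condition (3) removes.)

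Steps (ii)--(iv) are then routine. For (ii): reality of $\{F,G\}$ follows by conjugating \eqref{well} and relabelling $(\baluno,\bbtuno)\leftrightarrow(\bbtuno,\baluno)$ using \eqref{real}; momentum conservation follows because $\pi(\baluno+\baldue-\be_j,\bbtuno+\bbtdue-\be_j)=\pi(\baluno,\bbtuno)+\pi(\baldue,\bbtdue)=0$; and the scaling-degree additivity is a degree count: a monomial of homogeneity $d_1+2$ times one of homogeneity $d_2+2$, after removing one $u$ and one $\bar u$, has homogeneity $d_1+d_2+2$, hence scaling degree $d_1+d_2$, giving the filtered Lie algebra property $\{\cF^{\ge d_1},\cF^{\ge d_2}\}\subset\cF^{\ge d_1+d_2}$. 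For (iii), bilinearity is clear from the formula, antisymmetry from the factor $\baluno_j\bbtdue_j-\bbtuno_j\baldue_j$ changing sign under $F\leftrightarrow G$ together with relabelling, and the Leibniz rule / Jacobi identity I would prove by reducing to finite dimensions: both sides of each identity, evaluated on any fixed triple of monomials, involve only finitely many variables (again by the finite-support argument of step (i)), so the identity follows from the corresponding classical identity for the standard Poisson bracket on $\C^{2N}$ and then extends to all of $\cF$ by bilinearity and continuity in the formal topology. This same reduction gives (iv) immediately, since for monomials supported on $\{1,\dots,N\}$ formula \eqref{well} is literally $\im\sum_{j\le N}(\partial_{u_j}F\,\partial_{\bar u_j}G-\partial_{\bar u_j}F\,\partial_{u_j}G)$.
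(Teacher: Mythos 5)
Your steps (ii)--(iv) are fine and coincide with the paper's treatment: reality and momentum conservation of the bracket by relabelling, the degree count $|\bal|+|\bbt|=|\baluno|+|\baldue|+|\bbtuno|+|\bbtdue|-2$ for the filtration, and reduction of the Jacobi and Leibniz identities to finitely many variables. The gap is in step (i), which you rightly call the technical heart but whose mechanism you misidentify. You claim the antisymmetric factor $\baluno_j\bbtdue_j-\bbtuno_j\baldue_j$ kills all contributions with $j\notin\mathrm{supp}(\bal)\cup\mathrm{supp}(\bbt)$. It does not: take $\baluno_j=\bbtdue_j=1$ and $\bbtuno_j=\baldue_j=0$, i.e.\ the monomial of $F$ contributes a factor $u_j$ and that of $G$ a factor $\bar u_j$ (the genuine contraction term $\partial_{u_j}F\,\partial_{\bar u_j}G$ of the bracket). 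Then $\bal_j=\baluno_j+\baldue_j-1=0$ and $\bbt_j=0$, so $j$ lies outside the support of $(\bal,\bbt)$, yet the prefactor equals $1$. These are exactly the terms responsible for the divergence in the example $\{\sum_j u_j,\sum_i\bar u_i\}$, and a priori $j$ ranges over all of $\Z$ for them.

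What actually tames these terms --- the content of Lemma \ref{finmo}(2), case B --- is momentum conservation of the \emph{individual factors}, not of the product. Writing $(\baluno,\bbtuno)=(a^{(1)},b^{(1)})+(\be_j,0)$ and $(\baldue,\bbtdue)=(a^{(2)},b^{(2)})+(0,\be_j)$ with $(a^{(1)},b^{(1)})+(a^{(2)},b^{(2)})=(\bal,\bbt)$, the constraint $\pi(\baluno,\bbtuno)=0$ forces $j=-\pi(a^{(1)},b^{(1)})$: the index $j$ is uniquely determined by the splitting, and there are only finitely many splittings of a finitely supported pair. Your write-up only asserts that momentum conservation ``closes the argument'' without exhibiting this, while the explicit mechanism you do offer is false; as written, the finiteness of the coefficient sum defining $\{F,G\}_{\bal,\bbt}$ is not established. (The remaining configuration, where $F$'s monomial carries both a $u_j$ and a $\bar u_j$, is the one where the nonvanishing of the prefactor does confine $j$ to $\mathrm{supp}(a^{(2)}+b^{(2)})$, as in case A of the paper's lemma.)
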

	Before proving our assertion we need a technical lemma.
	Let $\be_j\in \N^\Z_f$ be the $j$th vector of the standard basis.
	\begin{lemma}\label{finmo} 1)\quad  Given $\alpha\in \N^\Z_f$ there  is only a finite number of pairs    $ \alpha^{(1)},\alpha^{(2)}\in \N^\Z_f$ with $\alpha=\alpha^{(1)}+\alpha^{(2)}$.
		2)\quad 	
		Given  $ (\bal,\bbt)\in \cM$ there is only a finite number of pairs    $ (\baluno,\bbtuno),\  (\baldue,\bbtdue)\in \cM$ and indices $j\in\Z$  such that:
		\begin{enumerate} 
			\item[\it i)] $ (\bal,\bbt)= (\baluno,\bbtuno)+(\baldue,\bbtdue)-(\be_j,\be_j)$ 
			\item[\it ii)] one has $\bal_j^{(1)}\bbt_j^{(2)}+ \bal_j^{(2)}\bbt_j^{(1)}\ne 0$.
		\end{enumerate}  
	\end{lemma}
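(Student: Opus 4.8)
The plan is to reduce both assertions to the elementary fact that a finitely supported multi-index has only finitely many additive decompositions; the real content of part 2 is that the momentum conservation property \eqref{mconserv} prevents the auxiliary index $j$ --- which a priori ranges over all of $\Z$ --- from escaping to infinity.

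For part 1, I would simply note that writing $\alpha=\alpha^{(1)}+\alpha^{(2)}$ amounts to choosing, for each coordinate $i$, a splitting $\alpha_i=\alpha_i^{(1)}+\alpha_i^{(2)}$ with $\alpha_i^{(1)},\alpha_i^{(2)}\in\N$: there are $\alpha_i+1$ such splittings, and since $\alpha\in\N^\Z_f$ all but finitely many coordinates force $\alpha_i^{(1)}=\alpha_i^{(2)}=0$, so the number of pairs is $\prod_i(\alpha_i+1)<\infty$. For part 2, the first step is to fix $j$: condition (i) is equivalent to $\baluno+\baldue=\bal+\be_j$ together with $\bbtuno+\bbtdue=\bbt+\be_j$, so part 1, applied to $\bal+\be_j$ and to $\bbt+\be_j$, shows that for each fixed $j$ there are only finitely many admissible quadruples $(\baluno,\bbtuno),(\baldue,\bbtdue)$. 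Hence it suffices to show that only finitely many values of $j$ can occur.

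The second, crucial step uses momentum conservation. Set $S:=\mathrm{supp}(\bal)\cup\mathrm{supp}(\bbt)$, a finite set; since $S$ is finite it is enough to bound those $j\notin S$ that occur. For such $j$ we have $\bal_j=\bbt_j=0$, so (i) forces $\baluno_j+\baldue_j=1$, $\bbtuno_j+\bbtdue_j=1$, and, for every $i\notin S\cup\{j\}$, $\baluno_i=\baldue_i=\bbtuno_i=\bbtdue_i=0$; in particular the restrictions of $\baluno$ and $\bbtuno$ to $S$ are dominated by $\bal$ and $\bbt$, hence range over finite sets. Since $\baluno_j,\bbtuno_j\in\{0,1\}$, condition (ii) is equivalent to $\baluno_j\ne\bbtuno_j$, i.e.\ $\baluno_j-\bbtuno_j=\pm1$. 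Now $(\baluno,\bbtuno)\in\cM$ reads $0=\pi(\baluno,\bbtuno)=(\baluno_j-\bbtuno_j)\,j+\sum_{i\in S}i\,(\baluno_i-\bbtuno_i)$, so $j=\mp\sum_{i\in S}i\,(\baluno_i-\bbtuno_i)$ takes only finitely many values; combined with the first step, this proves part 2.

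I expect the only genuinely delicate point to be this last step. Without the requirement $(\baluno,\bbtuno),(\baldue,\bbtdue)\in\cM$ the statement of part 2 is false --- one can take $j$ arbitrarily large with $\baluno=\be_j$, $\baldue=\bal$, $\bbtuno=0$, $\bbtdue=\bbt+\be_j$ --- so the argument must use \eqref{mconserv} in an essential way, and one has to notice that it is condition (ii), not merely (i), which pins $\baluno_j-\bbtuno_j$ to $\pm1$ and thereby turns $\pi(\baluno,\bbtuno)=0$ into an equation expressing $j$ through bounded data. Everything else is routine bookkeeping.
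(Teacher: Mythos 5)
Your proof is correct and follows essentially the same route as the paper's: part 1 by counting componentwise splittings, and part 2 by using condition \emph{(ii)} to force the extra $\be_j$'s to be split between the two factors (so that $\baluno_j-\bbtuno_j=\pm1$ when $j$ lies outside the support of $(\bal,\bbt)$) and then momentum conservation of $(\baluno,\bbtuno)$ to pin $j$ to finitely many values. The only difference is organizational --- you case-split on whether $j\in\mathrm{supp}(\bal)\cup\mathrm{supp}(\bbt)$, whereas the paper case-splits on how $(\be_j,\be_j)$ is distributed between the two factors --- and your observation that the conclusion fails without the $\cM$-membership hypothesis is a nice sanity check.
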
 
	\begin{proof} 1) is clear since for all $j$ one has $0\le (\alpha_1)_j\leq \alpha_j  $.\smallskip
		\\
		2)\quad By item 1) we may divide $(\bal,\bbt)= (a^{(1)}, b^{(1)})+ (a^{(2)}, b^{(2)})$ in a finite number of ways. Then 
the pairs 	$(\baluno,\bbtuno),(\baldue,\bbtdue)$  can only have one of the following forms (up to exchanging the indices)

A)	$\quad (\baluno,\bbtuno)= (a^{(1)}, b^{(1)})+ (\be_j,\be_j)\,,\quad (\baldue,\bbtdue) = (a^{(2)}, b^{(2)})$

B) $\quad (\baluno,\bbtuno)= (a^{(1)}, b^{(1)})+ (\be_j,0)\,,\quad (\baldue,\bbtdue) = (a^{(2)}, b^{(2)})+(0,\be_j)$,
\\
for some index $j\in \Z$.
\\
If we are in case $A)$ then 	by condition $ii)$  we have $j\in $Supp$(a^{(2)}+b^{(2)})$, which restricts to a finite number of possible $j's$. Otherwise in case $B)$ 
	by momentum conservation e have $j= -\pi( a^{(1)}, b^{(1)})={\pi} (a^{(2)}, b^{(2)})$ and again $j$ is restricted to a finite number of possible choices.
	\end{proof}
	\begin{proof}[Proof of Proposition \ref{degree decompositionbis}]
		The fact that the Poisson bracket is well defined  follows immediately from the previous Lemma. 
		Indeed by construction
			\[
		\{F,G\}= \sum_{\bal,\bbt} P_{\bal,\bbt} \buu \in \cF
		\]
		where  $P_{\bal,\bbt}=0$ if $\pi(\bal,\bbt)\ne 0$ and otherwise
		\begin{equation}
			\label{begon}
			P_{\bal,\bbt}= {\im}\sum_j \sum_{\substack {\bal^{(i)},\bbt^{(i)}\in\N^\Z_f:\; \pi(\bal^{(i)},\bbt^{(i)})=0\\ \bal = \baluno+\baldue - \be_j\,, \bbt = \bbtuno+\bbtdue - \be_j }}
			\bcoeffu{F}\bcoeffd{G} \pa{\baluno_j \bbtdue_j  - \bbtuno_j \baldue_j }\,.
		\end{equation}
	Then item 2 of the previous Lemma  implies that $P_{\al,\bt}$ above is given by a finite sum.
	\\
		The fact that it  endows $\cF$ with a  Poisson algebra structure follows from the fact that   the infinitely many identities defining such a structure   involve  only a finite number of elements $u_i,\bar u_i$  and then we are in the canonical  Poisson algebra.
		\\
The filtered Lie algebra property comes from the fact that in \eqref{begon} we get $|\alpha|+|\beta| = |\baluno|+ |\baldue|+ |\bbtuno|+ |\bbtdue| -2$, this shows that if $F\in \cF^{\ge \td_1}$, and $G\in \cF^{\ge \td_2}$  then 
		\begin{equation}
			\label{a+b}|\alpha|+|\beta| \ge \td_1+2+\td_2+2-2 = \td_1+\td_2+2.
		\end{equation}  so 
	$\{F,G\}\in \cF^{\ge \td_1+\td_2}$.
	\end{proof}

\begin{rmk}\label{gogna}
Let $H_i\in \cF^{\ge \td_i}$ be a sequence of formal Hamiltonians with  $\td_{i+1}\ge \td_i$ for all $i\ge 1$. Then the series
\[
H= \sum_{i=1}^\infty H_i \in \cF^{\ge \td_1}
\]
is well defined since for any $\td\ge \td_0$ the projection \[\Pi^{(\td)} H = \Pi^{(\td)} \sum_{i: \td_i \le \td}  H_i  \] is a finite sum.
\end{rmk}
	We say that  a linear operator $L:\cF\to \cF$ is of order (or increase the order by) $\td$  if for all $h$ 
	\[
	L:\cF^{ \ge h} \to \cF^{ \ge h+\td}\,.
	\]

\begin{lemma}\label{lemniscata}
let $L_n$ be a sequence of linear operators on $\cF$  and let  $\td_n$ be the order of $L_n$. If the sequence $\td_n$ increases to infinity then 
\[
L:= \sum_{n=1}^\infty L_n \,,\qquad T =\prod_{n=1}^\infty (\id + L_n)-\id 
\]
are  linear operators on $\cF$ of order  $\td_1$.
\end{lemma}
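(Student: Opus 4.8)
\textbf{Proof plan for Lemma \ref{lemniscata}.}

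The plan is to verify that both $L$ and $T$ are well-defined as linear operators on $\cF$ and that they have order $\td_1$, working degree by degree as in Remark \ref{gogna}. First I would fix $h\in\N$ and an element $f\in\cF^{\ge h}$, and examine the action of $L$ on $f$: since $L_n$ has order $\td_n$, one has $L_n f\in\cF^{\ge h+\td_n}$, so $\Pi^{(\td)}L_n f = 0$ whenever $\td_n > \td - h$. Because $\td_n\to\infty$, only finitely many indices $n$ satisfy $\td_n\le \td-h$; hence $\Pi^{(\td)}Lf = \Pi^{(\td)}\sum_{n:\,\td_n\le \td-h}L_n f$ is a finite sum and is well defined. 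This defines $Lf\in\cF$ as the formal series whose degree-$\td$ components are these finite sums (using that $\cF=\widehat\oplus_d\cF^d$), and since each $\Pi^{(\td)}Lf$ vanishes for $\td < h+\td_1$ (all the $\td_n\ge\td_1$), we get $Lf\in\cF^{\ge h+\td_1}$, i.e. $L$ has order $\td_1$. Linearity is immediate from linearity of each $L_n$ and of the projections.

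For the infinite product $T$, I would argue similarly but keep track of the combinatorics. For a finite truncation $T_N := \prod_{n=1}^N(\id+L_n) - \id = \sum_{\emptyset\ne S\subseteq\{1,\ldots,N\}}\prod_{n\in S}L_n$ (the product taken in decreasing order of indices, or any fixed order), each summand $\prod_{n\in S}L_n$ has order $\sum_{n\in S}\td_n$, since composition of operators of orders $\td',\td''$ gives an operator of order $\td'+\td''$ (a direct consequence of the definition). Fixing $f\in\cF^{\ge h}$ and a target degree $\td$, the component $\Pi^{(\td)}\big(\prod_{n\in S}L_n\big)f$ vanishes unless $\sum_{n\in S}\td_n\le\td-h$; since $\td_n\to\infty$, there are only finitely many such subsets $S$ of $\N$ (any $S$ contributing must avoid all but finitely many indices, and among those finitely many indices the size of $S$ is bounded because the $\td_n$ are positive once $n$ is large — more precisely one uses $\td_n\ge\td_1$ for all $n$ and $\td_n\to\infty$ to bound $|S|$). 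Hence $\Pi^{(\td)}T_N f$ stabilizes for $N$ large (depending on $\td, h$) to a finite sum, which we take as the definition of $\Pi^{(\td)}Tf$; this makes $T$ a well-defined linear operator, and again $\Pi^{(\td)}Tf=0$ for $\td<h+\td_1$, so $T$ has order $\td_1$.

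The main obstacle is making rigorous the claim that only finitely many subsets $S\subseteq\N$ can contribute to a fixed degree $\td$: one must check that the set $\{S\subseteq\N : \sum_{n\in S}\td_n\le \td-h\}$ is finite. This follows because the $\td_n$ are non-negative integers tending to $+\infty$, so there exists $n_0$ with $\td_n>\td-h$ for all $n\ge n_0$; any contributing $S$ is therefore contained in $\{1,\ldots,n_0-1\}$, and there are only $2^{n_0-1}$ such subsets. Once this finiteness is in place, the interchange of the (now legitimately finite) sums and the consistency of the definitions across different values of $\td$ are routine, and the bound on the order drops out of $\td_n\ge\td_1$. I would also remark that the same reasoning shows $T = \sum_{k\ge 1}\big(\sum_{n_1>\cdots>n_k}L_{n_1}\cdots L_{n_k}\big)$ as an equality of operators on $\cF$, which is the form in which $T$ will be used when applying the Baker--Campbell--Hausdorff formula later.
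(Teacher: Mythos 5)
Your argument is correct and follows essentially the same degree-by-degree stabilization strategy as the paper's proof; the only cosmetic difference is that for the infinite product the paper uses the recursion $\prod_{n=1}^N(\id+L_n)=\prod_{n=1}^{N-1}(\id+L_n)+L_N\prod_{n=1}^{N-1}(\id+L_n)$ to see that the truncations agree modulo $\cF^{>\td}$ once $N$ is large, whereas you expand the product over subsets of indices and verify the finiteness of the contributing subsets directly. Both verifications are valid and give the same conclusion.
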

\begin{proof}
For the first statement, for all $\td\in \N$  let   $N(\td)$  be the largest $N$ such that $\td_N\le \td$. By construction
$\Pi^{(\le \td)} L_n K=0$ for all $n> N(\td)$ and for any $K\in \cF$.
 Then for all $K\in \cF$ and $N>N(\td)$ one has
	\[
\Pi^{(\le \td)}	\sum_{n=1}^N L_n K  = \Pi^{(\le \td)}	\sum_{n=1}^{N(\td)} L_n K\,,
	\] and the claim follows.
	\\
	Regarding the second statement we proceed similarly
	\[
		\prod _{n=1}^N (\id + L_n ) =  \prod _{n=1}^{N-1} (\id + L_n ) +L_N \prod _{n=1}^{N-1} (\id + L_n )\,,
	\] hence, {for all $\td\geq 0$ and all $N> N(\td)$}
	\[
\Pi^{(\le \td)}\prod _{n=1}^N (\id + L_n ) =  \Pi^{(\le \td)}\prod _{n=1}^{N(\td)} (\id + L_n )\,.
\]
\end{proof} As a direct consequence we have the following.
\begin{cor}
	Given $G\in \cF^{\ge  \td}$, with $\td\ge 1$  we define 
	\begin{equation}
		\label{flusso}
	\ad_G:= \{G,\cdot \} \,,\quad \Phi_G:=	\exp(\{G,\cdot\})= \sum_{k\ge 0} \frac{\ad_G^k}{k!}\,,
	\end{equation}
	then $ 	\ad_G$ and  $\Phi_G-\id$  are operators of order $\td$, namely
	\[
	\ad_G,\Phi_G -\id: \cF^{\ge h} \to \cF^{\ge h+\td}\,.
	\]
	Similarly for any sequence $b_k$  one has that 
	\[
	\sum_{k\ge \tn } b_k{\ad_G^k}: \cF^{\ge h} \to \cF^{\ge h+\td\tn }\,.
	\] 
	\end{cor}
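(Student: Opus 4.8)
The plan is to deduce the whole statement from two facts already established: the filtered Lie algebra structure of the Poisson bracket (\rp{degree decompositionbis}) and \rl{lemniscata}, together with the convergence principle of \rrem{gogna}. First I would dispose of the claim about $\ad_G$: by \rp{degree decompositionbis}, if $G\in\cF^{\ge\td}$ and $K\in\cF^{\ge h}$ then $\{G,K\}\in\cF^{\ge h+\td}$, so $\ad_G$ is by definition an operator of order $\td$. Composing it with itself then gives $\ad_G^k:\cF^{\ge h}\to\cF^{\ge h+k\td}$ for all $k\ge1$, so $\ad_G^k$ is an operator of order $k\td$; here is the one place where the hypothesis $\td\ge1$ matters, since it forces these orders to increase to infinity with $k$.

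Next I would treat $\Phi_G$. I would first check it is a well-defined map $\cF\to\cF$: for fixed $K\in\cF^{\ge h}$ the terms $H_k:=\tfrac1{k!}\ad_G^kK$ lie in $\cF^{\ge h+k\td}$ with scaling orders tending to infinity, so \rrem{gogna} guarantees that $\Phi_G K=\sum_{k\ge0}H_k$ converges in $\cF$ (only finitely many $H_k$ contribute to any fixed projection $\Pi^{(\le\td)}$). For the order statement I would apply \rl{lemniscata} to the sequence of linear operators $L_k:=\tfrac1{k!}\ad_G^k$, $k\ge1$: the order of $L_k$ is $k\td$, these orders increase to infinity, hence $L:=\sum_{k\ge1}L_k=\Phi_G-\id$ is a linear operator of order $\td_1=\td$, which is exactly $\Phi_G-\id:\cF^{\ge h}\to\cF^{\ge h+\td}$.

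The last assertion follows by the same mechanism after a reindexing: given any sequence $(b_k)$ and $\tn\ge1$, set $L_m:=b_{m+\tn-1}\,\ad_G^{m+\tn-1}$ for $m\ge1$, with the convention that $L_m$ is the zero operator (which trivially raises the order by any amount) when the coefficient vanishes. Each $L_m$ has order at least $(m+\tn-1)\td$, and these orders increase to infinity since $\td\ge1$, so \rl{lemniscata} gives that $\sum_{m\ge1}L_m=\sum_{k\ge\tn}b_k\ad_G^k$ is an operator of order $\td_1=\tn\td$; its pointwise convergence on $\cF$ is again an instance of \rrem{gogna}. I do not expect a genuine obstacle in any of this: the argument is essentially bookkeeping on scaling degrees. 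The only points needing a little care are verifying the hypothesis of \rl{lemniscata}, namely that the successive orders really tend to infinity — which is precisely where $\td\ge1$ is used and cannot be dropped — and checking that vanishing coefficients $b_k$ cause no problem.
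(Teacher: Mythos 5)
Your argument is correct and is exactly the route the paper intends: the corollary is stated there as a direct consequence of Lemma \ref{lemniscata} applied to $L_k=\tfrac{1}{k!}\ad_G^k$ (resp.\ $b_k\ad_G^k$), with the order $k\td$ of $\ad_G^k$ coming from the filtered Lie algebra property of Proposition \ref{degree decompositionbis} and convergence handled as in Remark \ref{gogna}. Nothing to add.
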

\begin{defn}
	Given $G\in \cF^{\ge 1}$ we call the operator $\Phi_G$ defined in \eqref{flusso} a formal symplectic change of variables on $\cF$.
\end{defn}
The following Lemma ensures the group structure of the formal symplectic changes of variables
\begin{lemma}[Baker-Campbell-Haussdorf]\label{BCH}
Given   $F\in \cF^{\ge \td_1}$ and  $G\in \cF^{\geq \td_2}$, with $\td_i\ge 1$,  then there exists $K\in \cF^{\ge 1}$, such that
	\[
	e^{\{G,\}}e^{\{F,\}}=  e^{\{ K,\}} \,,\qquad  K-F-G \in  \cF^{ \ge \td_1+\td_2}
	\] 
\end{lemma}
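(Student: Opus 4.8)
The statement is the Baker--Campbell--Hausdorff formula in the filtered Lie algebra $\cF$, and the plan is to run the usual BCH construction but to justify convergence using only the filtered structure established in Proposition~\ref{degree decompositionbis} and the operator estimates in Lemma~\ref{lemniscata} and its Corollary. Concretely, I would first reduce everything to an identity between operators on $\cF$: since $e^{\{G,\cdot\}}\circ e^{\{F,\cdot\}}$ is itself of the form $\id + (\text{operator of order } \ge \min(\td_1,\td_2))$ by the Corollary, I want to produce $K$ with $\{K,\cdot\} = \log\big(e^{\{G,\cdot\}}\circ e^{\{F,\cdot\}}\big)$, where the logarithm is defined by its power series $\log(\id + L) = \sum_{m\ge 1}\frac{(-1)^{m+1}}{m}L^m$. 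The key point is that $L := e^{\{G,\cdot\}}\circ e^{\{F,\cdot\}} - \id$ is an operator of order $\ge \td_0 := \min(\td_1,\td_2)\ge 1$, so by the last display of the Corollary each term $\frac{1}{m}L^m$ has order $\ge m\td_0$, hence $\td_m \to \infty$ and Lemma~\ref{lemniscata} applies to show the series $\sum_{m\ge1}\frac{(-1)^{m+1}}{m}L^m$ defines a genuine operator on $\cF$.

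\textbf{Main steps.} (i) Set $A = \ad_F$, $B = \ad_G$; both are order $\ge 1$ operators, and by Remark~\ref{gogna} and Lemma~\ref{lemniscata} all the relevant infinite series of nested commutators in $A,B$ converge as operators on $\cF$. (ii) Apply the classical formal BCH identity for the associative-algebra exponentials: there is a Lie series $\mathrm{bch}(B,A)$ in $A$ and $B$ (iterated commutators of operators, starting with $A+B$, then $\tfrac12[B,A]$, then terms with at least two brackets) such that $e^{B}e^{A} = e^{\mathrm{bch}(B,A)}$ holds as an identity of formal power series in two non-commuting variables; the content I need is purely the combinatorial BCH identity, which is a statement about free Lie algebras and is independent of analytic issues. (iii) Observe that the Lie algebra of operators $\{\ad_H : H\in\cF^{\ge1}\}$ is closed under commutator because $[\ad_F,\ad_G] = \ad_{\{F,G\}}$ (Jacobi identity in the Poisson algebra $\cF$, which is part of Proposition~\ref{degree decompositionbis}), and $\{F,G\}\in\cF^{\ge \td_1+\td_2}\subset\cF^{\ge1}$. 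Hence every homogeneous piece of $\mathrm{bch}(B,A)$ is $\ad_{K_j}$ for some $K_j\in\cF^{\ge1}$, with the orders of the $K_j$ tending to infinity (a piece with $\nu$ brackets lies in $\cF^{\ge (\nu+1)\min(\td_1,\td_2)}$, roughly); by Remark~\ref{gogna} the sum $K := \sum_j K_j$ is a well-defined element of $\cF^{\ge1}$, and $\mathrm{bch}(B,A) = \ad_K$. (iv) Conclude $e^{\{G,\cdot\}}e^{\{F,\cdot\}} = e^{\{K,\cdot\}}$, and since $K_j$ for $j$ the first-order term is exactly $F+G$ and all further $K_j$ carry at least two brackets, hence lie in $\cF^{\ge \td_1+\td_2}$, we get $K - F - G \in \cF^{\ge \td_1+\td_2}$.

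\textbf{Where the difficulty lies.} The combinatorial BCH identity itself is standard and I would cite it rather than reprove it, so it is not the obstacle. The real care is in the bookkeeping between operators and generating Hamiltonians: one must check that $\mathrm{bch}(B,A)$, which a priori is only an operator on $\cF$, is actually of the form $\ad_K$ for a single $K\in\cF^{\ge1}$ obtained by the convergent-in-$\cF$ sum of Remark~\ref{gogna} rather than merely an ill-defined ``formal sum of $\ad$'s''. This requires knowing that the map $H\mapsto\ad_H$ is injective on $\cF$ modulo the obvious kernel (constants and the $|u_0|^2$-type term, which are excluded by conditions (1)--(2) in Definition~\ref{Hr}), so that each homogeneous operator-level identity uniquely determines its Hamiltonian, and then that these Hamiltonians assemble via Remark~\ref{gogna}. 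A clean way to avoid even that injectivity discussion is to construct $K$ directly on the Hamiltonian side: define $K_1 = F+G$ and $K_{n+1} = K_n + (\text{the degree-}(n+1)\text{ BCH correction, expressed via Poisson brackets of } F,G)$, check by induction that $e^{\{K_n,\cdot\}}$ agrees with $e^{\{G,\cdot\}}e^{\{F,\cdot\}}$ up to order $n$ using the filtered estimates, and pass to the limit by Remark~\ref{gogna}; this is the version I would actually write, as it keeps everything inside $\cF$ and never leaves the safe ground of finite truncations.
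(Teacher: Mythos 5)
Your proposal is correct and takes essentially the same route as the paper: the authors cite the explicit Dynkin form of the BCH series from Serre, read the nested commutators directly as iterated Poisson brackets of $F$ and $G$ (hence as elements of $\cF$, which is exactly the ``Hamiltonian-side'' construction you settle on at the end, sidestepping any injectivity discussion for $H\mapsto\ad_H$), and use the filtration to see that each projection $\Pi^{(\le \td)}K$ is a finite sum. The order bookkeeping is also the same: every term with $n\ge 2$ letters involves at least one $F$ and one $G$ and so lies in $\cF^{\ge \td_1+\td_2}$.
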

\begin{proof}
	By the Baker-Campbell-Hausdorff formula (\cite{serre-Lie1}[p.29]) one has
	\[
K:= 
	\sum_{n=1}^\infty \frac{(-1)^{n-1}}{n} \sum_{r_i+s_i>0} \frac{[G^{r_1}F^{s_1}\dots  G^{r_n}F^{s_n}]}{
		\left(\sum_{i=1}^n(r_i+s_i)\right) \prod_{i=1}^n r_i! s_i!}
	\]
	where
\begin{equation}
		\label{hoho}
	[G^{r_1}F^{s_1}\dots G^{r_n}F^{s_n}]:= \begin{cases}
		\ad_G^{r_1}\ad_F^{s_1}\dots \ad_G^{r_n}F  \quad \mbox{ if} \; s_n=1
		\\
		\ad_G^{r_1}\ad_F^{s_1}\dots \ad_F^{s_{n-1}} G  \quad \mbox{ if}\; s_n=0\,,\; \mbox {and} \; r_n=1
		\\
		0  \quad \quad \quad \quad  \mbox {otherwise}
	\end{cases}
\end{equation}
	Recalling that   $F\in \cF^{\ge \td_1}$ and  $G\in \cF^{\geq \td_2}$, each term $\ad_G^{r_1}\ad_F^{s_1}\dots \ad_G^{r_n}F$ (resp. $\ad_G^{r_1}\ad_F^{s_1}\dots \ad_F^{s_{n-1}}G$)
	is of order $\left(\sum_{i=1}^{n}r_i\right)\td_2+(\sum_{i=1}^{n}s_i)\td_1\geq n \min(\td_1,\td_2)$. 
	Hence setting $N(\td)$ to be the largest $N$ such that $ N \min(\td_1,\td_2) \le \td$n 
	\[
	\Pi^{\le \td} K= \Pi^{\le \td} \sum_{n=1}^{N(\td)} \frac{(-1)^{n-1}}{n} \sum_{r_i+s_i>0} \frac{[G^{r_1}F^{s_1}\dots  G^{r_n}F^{s_n}]}{
		\left(\sum_{i=1}^n(r_i+s_i)\right) \prod_{i=1}^n r_i! s_i!}
	\]
	\\
	Moreover if $n\ge 2 $ then  the Hamiltonian in \eqref{hoho} is of order $\ge \td_1+\td_2$, so $K-F-G \in  \cF^{ \ge \td_1+\td_2}$. 
\end{proof}
\begin{lemma} \label{compo} Given a sequence of generating functions $G_i\in \cF^{{\geq} \td_i}$ with $\td_{i+1} > \td_i\ge 1$ then there exists $\cG\in \cF^{\ge d_1}$ such that the composition
	\[
\prod_i	e^{\{G_i,\}} = e^{\{\cG,\}}
	\] 
	\end{lemma}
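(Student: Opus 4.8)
The plan is to construct $\cG$ as a limit of a sequence $\cG_N\in\cF^{\ge\td_1}$ obtained by iterating the Baker--Campbell--Hausdorff Lemma~\ref{BCH} on the partial compositions $e^{\{G_1,\}}\cdots e^{\{G_N,\}}$, and then to check that $e^{\{\cG,\}}$ really is the full infinite product --- which, independently, is already a well-defined operator thanks to Lemma~\ref{lemniscata}.

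First I would set $\cG_1:=G_1\in\cF^{\ge\td_1}$ and argue by induction: assuming $e^{\{G_1,\}}\cdots e^{\{G_{N-1},\}}=e^{\{\cG_{N-1},\}}$ with $\cG_{N-1}\in\cF^{\ge\td_1}$, apply Lemma~\ref{BCH} with $G:=\cG_{N-1}\in\cF^{\ge\td_1}$ and $F:=G_N\in\cF^{\ge\td_N}$ to get $\cG_N\in\cF^{\ge1}$ with $e^{\{G_1,\}}\cdots e^{\{G_N,\}}=e^{\{\cG_N,\}}$ and $\cG_N-\cG_{N-1}-G_N\in\cF^{\ge\td_1+\td_N}$. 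Since $\td_1\ge1$, this forces $\cG_N-\cG_{N-1}\in\cF^{\ge\td_N}\subseteq\cF^{\ge\td_1}$, hence $\cG_N\in\cF^{\ge\td_1}$, and the induction closes. As $\td_N\nearrow\infty$, Remark~\ref{gogna} applies to the telescoping series and
\[
\cG:=G_1+\sum_{N\ge2}(\cG_N-\cG_{N-1})=\lim_{N\to\infty}\cG_N
\]
is a well-defined element of $\cF^{\ge\td_1}$, with $\Pi^{(\le\td)}\cG=\Pi^{(\le\td)}\cG_N$ as soon as $\td_{N+1}>\td$, since all omitted terms then lie in $\cF^{>\td}$.

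It then remains to identify $e^{\{\cG,\}}$ with $\Phi:=\prod_i e^{\{G_i,\}}$. By the Corollary following Lemma~\ref{lemniscata} each $e^{\{G_n,\}}-\id$ has order $\td_n$, and since $\td_n\nearrow\infty$ Lemma~\ref{lemniscata} makes $\Phi$ a well-defined operator of order $\td_1$ with $\Pi^{(\le\td)}\Phi=\Pi^{(\le\td)}\big(e^{\{G_1,\}}\cdots e^{\{G_N,\}}\big)=\Pi^{(\le\td)}e^{\{\cG_N,\}}$ once $\td_{N+1}>\td$. On the other hand, writing $\cG=\cG_N+R_N$ with $R_N:=\cG-\cG_N\in\cF^{\ge\td_{N+1}}$, the expansion $\ad_\cG^k-\ad_{\cG_N}^k=(\ad_{\cG_N}+\ad_{R_N})^k-\ad_{\cG_N}^k$ is a sum of compositions each involving at least one factor $\ad_{R_N}$ (of order $\td_{N+1}$) together with factors $\ad_{\cG_N}$ (of order $\ge\td_1\ge1$); summing over $k$, this shows $e^{\{\cG,\}}-e^{\{\cG_N,\}}$ sends $\cF^{(h)}$ into $\cF^{\ge h+\td_{N+1}}$ for every $h$, hence $\Pi^{(\le\td)}e^{\{\cG,\}}=\Pi^{(\le\td)}e^{\{\cG_N,\}}$ whenever $\td_{N+1}>\td$. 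Comparing the two identities gives $\Pi^{(\le\td)}e^{\{\cG,\}}=\Pi^{(\le\td)}\Phi$ for all $\td$, i.e. $e^{\{\cG,\}}=\Phi=\prod_i e^{\{G_i,\}}$, as desired.

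The order bookkeeping throughout is routine; the one delicate point is the passage to the limit in the last paragraph. It does not suffice to observe that $\cG_N\to\cG$ and $e^{\{G_1,\}}\cdots e^{\{G_N,\}}\to\Phi$ both converge: one needs the quantitative statement that $e^{\{\cG,\}}-e^{\{\cG_N,\}}$ has order $\ge\td_{N+1}$, i.e. the continuity of the exponential map $G\mapsto e^{\{G,\}}$ for the topology in which convergence of a sequence means each projection $\Pi^{(\td)}$ is eventually stationary. I would carry this out through the explicit order estimate above, which is self-contained given Lemma~\ref{lemniscata} and the Corollary following it.
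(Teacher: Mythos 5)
Your proof is correct and follows the same route as the paper's: iterate Lemma~\ref{BCH} on the partial products, use the resulting order estimate $\cG_N-\cG_{N-1}\in\cF^{\ge\td_N}$ to see that each projection $\Pi^{(\le\td)}\cG_N$ eventually stabilizes, and pass to the limit via Remark~\ref{gogna}. Your final paragraph, identifying $e^{\{\cG,\}}$ with the infinite product by showing $e^{\{\cG,\}}-e^{\{\cG_N,\}}$ has order $\ge\td_{N+1}$, makes explicit a continuity step that the paper leaves implicit; it is a sound addition, not a deviation.
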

\begin{proof} 
By Lemma \ref{lemniscata} with $L_n = e^{\{G_n,\}}  -\id$ we know that  $\prod_i	e^{\{G_i,\}}$ is a well defined operator of $\cF$. 
Using Lemma \ref{BCH} we can define $F_k\in \cF^{\ge1}$ iteratively so that
	\[
 e^{\{F_k,\cdot\}}= e^{\{G_k,\cdot\}} e^{\{F_{k-1},\cdot\}}
	\]
	since $e^{\{G_k,\cdot\}} -\id$ is of order $\td_k$  there exists $N(\td)$ such that if $k>N(\td)$ then 
	\[
\Pi^{(\le \td )}	F_k = \Pi^{(\le \td )}	F_{N(\td)}
	\]
Then  $\cG= \lim_{k\to \infty} F_k$ is well defined.
\end{proof}
For any vector $\omega\in \R^\Z$ such that
\[\omega\cdot\ell \ne 0\,,\quad \forall \ell\in \Z^\Z_f\setminus\{0\}\,,
\] 
we define the {\it non-resonant} quadratic Hamiltonian
\[
D_\omega:= \sum_j \omega_j |u_j|^2\,.
\]
\begin{lemma}\label{uff}
	The operator $\ad_{D_\omega}$ is invertible on $\cR^{(\td)}$ for all $\td$.
\end{lemma}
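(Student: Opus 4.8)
The plan is to exhibit $\ad_{D_\omega}$ as a \emph{diagonal} operator in the monomial ``basis'' of $\cF$ and then to read off its inverse from the fact that, on $\cR^{(\td)}$, all of its eigenvalues are non-zero. At this purely formal level no quantitative bound on the small divisors is needed, only the rational independence of $\omega$.

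First I would compute the action of $\ad_{D_\omega}=\{D_\omega,\cdot\}$ on a monomial $u^\bal\bar u^\bbt$ with $(\bal,\bbt)\in\cM$, using the explicit formula $\req{well}$. Since $D_\omega=\sum_{k}\omega_k u_k\bar u_k$ has only the coefficients $(D_\omega)_{\be_k,\be_k}=\omega_k$, in $\req{well}$ one must take $\bal^{(1)}=\bbt^{(1)}=\be_k$, and the inner sum over $j$ collapses to the single term $j=k$ (because $(\be_k)_j=\delta_{jk}$), leaving $u^{\be_k+\bal-\be_k}\bar u^{\be_k+\bbt-\be_k}=u^\bal\bar u^\bbt$. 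Summing over $k$ one obtains
\[
\ad_{D_\omega}\pa{u^\bal\bar u^\bbt}=\im\,\pa{\sum_{k}\omega_k(\bbt_k-\bal_k)}u^\bal\bar u^\bbt=\im\,\pa{\omega\cdot(\bbt-\bal)}\,u^\bal\bar u^\bbt .
\]
In particular $\ad_{D_\omega}$ preserves the scaling degree, kills $\cK$, and sends $\cR^{(\td)}$ into itself, so it does restrict to an operator of $\cR^{(\td)}$ for every $\td$.

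Next I would observe that a monomial $u^\bal\bar u^\bbt$ belongs to $\cR$ exactly when $\bal\ne\bbt$, i.e. when $\ell:=\bal-\bbt$ is a non-zero element of $\Z^\Z_f$. By the standing hypothesis on $\omega$ (namely $\omega\cdot\ell\ne0$ for every $\ell\in\Z^\Z_f\setminus\{0\}$), the scalar $\omega\cdot(\bbt-\bal)=-\omega\cdot\ell$ is non-zero on each such monomial. Hence I may define a linear operator $\cR^{(\td)}\to\cR^{(\td)}$ diagonally by
\[
\ad_{D_\omega}^{-1}\pa{u^\bal\bar u^\bbt}:=\frac{\im}{\omega\cdot(\bal-\bbt)}\,u^\bal\bar u^\bbt\qquad\text{for }(\bal,\bbt)\in\cM,\ \bal\ne\bbt ,
\]
extended by linearity, i.e. $R=\sum_{\bal\ne\bbt}R_{\bal,\bbt}u^\bal\bar u^\bbt\mapsto\sum_{\bal\ne\bbt}\tfrac{\im}{\omega\cdot(\bal-\bbt)}R_{\bal,\bbt}u^\bal\bar u^\bbt$. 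The only point to check is that this is a bona fide element of $\cR^{(\td)}$: it is still homogeneous of the same degree $\td+2$, still momentum-conserving, and still supported on monomials with $\bal\ne\bbt$, which all hold since we merely rescaled each coefficient by a non-zero number. Since $\ad_{D_\omega}$ and $\ad_{D_\omega}^{-1}$ are mutually inverse scalar multiples on every monomial, they are two-sided inverses on $\cR^{(\td)}$, which proves the claim.

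I do not foresee a genuine obstacle here: the whole content is that at the formal level invertibility of $\ad_{D_\omega}$ is automatic once $\omega$ is rationally independent — the small-divisor difficulties (controlling $|\omega\cdot\ell|^{-1}$ against the growth of $|\ell|$) only become an issue later, when one asks the inverse to preserve analyticity on $\th_{s,p,\theta}$. The one thing worth flagging is that $\cR^{(\td)}$ is infinite-dimensional, so ``invertible'' must be read as a statement about a diagonal operator on formal series rather than about a finite matrix; but this is harmless precisely because a diagonal operator with non-vanishing entries has an obvious diagonal inverse defined on all of $\cF$.
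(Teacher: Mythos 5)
Your proposal is correct and follows essentially the same route as the paper: both diagonalize $\ad_{D_\omega}$ on the monomial basis, obtaining the eigenvalue $\im\,\omega\cdot(\bbt-\bal)$, and invert it coefficientwise using the rational independence of $\omega$ to guarantee these eigenvalues are non-zero on $\cR^{(\td)}$. Your additional checks (that the inverse preserves the degree, momentum conservation, and the support condition $\bal\ne\bbt$) are correct and merely make explicit what the paper leaves implicit.
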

\begin{proof}
	Given $F\in \cR^{(\td)}$,  we have
	\begin{equation*}
	\set{D_{\omega},G} = \im\sum_{\substack {\bal^{(2)},\bbt^{(2)}\in\N^\Z\,, \\
			|\bal^{(2)}|+|\bbt^{(2)}|<\infty\,,\, \pi(\bal^{(2)},\bbt^{(2)})=0}}  \bcoeffd{G} \left(\sum_j \omega_j\pa{\bbtdue_j  - \baldue_j }\right)u^{\baldue}\bar{u}^{\bbtdue}=F
	\end{equation*}
	Hence, we have $G:=\ad_{D_\omega}^{-1}(F)$ with for all $\bal^{(2)},\bbt^{(2)}\in\N^\Z$, $\bal^{(2)}\neq\bbt^{(2)}$ with $|\bal^{(2)}|+|\bbt^{(2)}|<\infty$ and $\pi(\bal^{(2)},\bbt^{(2)})=0$,  
	\[
	\bcoeffd{G}:=\bcoeffd{F} \left(\sum_j {\im}\omega_j\pa{\bbtdue_j  - \baldue_j }\right)^{-1},\quad G_{\alpha^2,\alpha^2}=0.
	\]
\end{proof}
\begin{prop}[Birkhoff Normal Form] \label{giova}Given any formal Hamiltonian of the form
\begin{equation}
	\label{Birkhof}
	H= D_\omega + Z + R
\end{equation}
where $Z\in \cK^{\ge 2}$ and $R\in \cF^{\ge \td}$  with $\td\ge 1$, then
	 \begin{enumerate}
		\item {\bf Formal Normal Form:} there exists $S\in  \cF^{\ge \td}$ such that
	\[
	e^{\{S,\cdot\}} H = D_\omega  + \wtZ\,,\quad \wtZ- Z \in \cK^{\ge \td}\,.
	\]
	\item {\bf Uniqueness:} if $G\in \cF^{\ge 1}$ is such that $e^{\{G,\cdot\}} H \in \cK$ then $e^{\{G,\cdot\}} H= e^{\{S,\cdot\}} H$. Hence to each $H$ as above we can associate a unique  $Z_H\in \cK^{\ge 2}$ such that $e^{\{S,\cdot\}} H= D_\omega+ Z_H$.
	\end{enumerate}
\end{prop}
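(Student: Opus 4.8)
The plan is to construct $S$ as an infinite composition of elementary Birkhoff steps, convergent in the scaling filtration, and then to extract the uniqueness statement directly from that filtration.

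\emph{Existence.} I would iterate over the scaling order. Put $H_1:=H$, $Z_1:=Z$, $R_1:=R\in\cF^{\ge\td}$, and suppose inductively that $H_n=D_\omega+Z_n+R_n$ with $Z_n\in\cK^{\ge2}$ and $R_n\in\cF^{\ge\td+n-1}$. Split the lowest homogeneous component $\Pi^{(\td+n-1)}R_n=\Pi^\cK\Pi^{(\td+n-1)}R_n+\Pi^\cR\Pi^{(\td+n-1)}R_n$ and, using \rl{uff} (invertibility of $\ad_{D_\omega}$ on $\cR^{(\td+n-1)}$), set $S_n:=\ad_{D_\omega}^{-1}(\Pi^\cR\Pi^{(\td+n-1)}R_n)\in\cR^{(\td+n-1)}$. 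Since $S_n$ is homogeneous of order $\td+n-1\ge1$, the filtered Lie algebra property (\rp{degree decompositionbis}) together with the corollary to \rl{lemniscata} place $\{S_n,Z_n\}$, $\{S_n,R_n\}$ and all $\ad_{S_n}^kH_n$ with $k\ge2$ into $\cF^{\ge\td+n}$, so that
\[
e^{\{S_n,\cdot\}}H_n \;\equiv\; D_\omega+Z_n+R_n-\ad_{D_\omega}S_n \pmod{\cF^{\ge\td+n}}.
\]
By the choice of $S_n$ the order-$(\td+n-1)$ part of $R_n-\ad_{D_\omega}S_n$ is $\Pi^\cK\Pi^{(\td+n-1)}R_n$, which lies in $\cK^{\ge2}$ because $\cK$ contains no odd-degree monomials. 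Setting $Z_{n+1}:=Z_n+\Pi^\cK\Pi^{(\td+n-1)}R_n\in\cK^{\ge2}$ and $R_{n+1}:=e^{\{S_n,\cdot\}}H_n-D_\omega-Z_{n+1}$ gives $R_{n+1}\in\cF^{\ge\td+n}$, closing the induction. (Along the way one notes that $\ad_{D_\omega}^{-1}$ preserves the reality condition, so the $S_n$, and hence $S$, may be taken real.)

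\emph{Assembling the limit.} The $S_n$ have strictly increasing orders $\td+n-1\ge\td\ge1$, so \rl{compo} yields $S\in\cF^{\ge\td}$ with $\prod_n e^{\{S_n,\cdot\}}=e^{\{S,\cdot\}}$, while \rrem{gogna} shows $\wtZ:=Z+\sum_{n\ge1}\Pi^\cK\Pi^{(\td+n-1)}R_n$ is a well-defined element of $\cK^{\ge2}$ with $\wtZ-Z\in\cK^{\ge\td}$. Finally, for each fixed $m$ the projections $\Pi^{(\le m)}$ of the partial compositions applied to $H$, and of the $Z_n$, stabilize as $n\to\infty$, while $\Pi^{(\le m)}R_{n+1}=0$ for $n$ large; hence $\Pi^{(\le m)}(e^{\{S,\cdot\}}H)=\Pi^{(\le m)}(D_\omega+\wtZ)$ for all $m$, i.e.\ $e^{\{S,\cdot\}}H=D_\omega+\wtZ$.

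\emph{Uniqueness.} Let $N_1:=e^{\{S,\cdot\}}H=D_\omega+\wtZ$ and $N_2:=e^{\{G,\cdot\}}H\in\cK$. Since $e^{\{-S,\cdot\}}$ inverts $e^{\{S,\cdot\}}$, \rl{BCH} gives $N_2=e^{\{G,\cdot\}}e^{\{-S,\cdot\}}N_1=e^{\{K,\cdot\}}N_1$ for some $K\in\cF^{\ge1}$; it therefore suffices to prove that if $K\in\cF^{\ge1}$ and $N_1,\,e^{\{K,\cdot\}}N_1\in D_\omega+\cK^{\ge2}$ then $e^{\{K,\cdot\}}N_1=N_1$. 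If $K\ne0$, let $d$ be its order and $L:=\Pi^{(d)}K$; exactly as above the order-$d$ part of $e^{\{K,\cdot\}}N_1-N_1$ equals $-\ad_{D_\omega}L=-\ad_{D_\omega}\Pi^\cR L\in\cR^{(d)}$, whereas $e^{\{K,\cdot\}}N_1-N_1=N_2-N_1\in\cK^{\ge2}$. Since $\cK\cap\cR=\{0\}$ and $\ad_{D_\omega}$ is invertible on $\cR^{(d)}$, this forces $\Pi^\cR L=0$, i.e.\ $L\in\cK$; then $\{L,N_1\}=\{L,D_\omega\}+\{L,\wtZ\}=0$, because $\ad_{D_\omega}$ annihilates $\cK$ and $\cK$ is abelian for the bracket \eqref{well}, so $e^{\{L,\cdot\}}N_1=N_1$. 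By \rl{BCH} one may replace $K$ by $K'$ with $e^{\{K',\cdot\}}=e^{\{K,\cdot\}}e^{-\{L,\cdot\}}$, which again satisfies $e^{\{K',\cdot\}}N_1=N_2$ and now $K'\in\cF^{\ge d+1}$. Iterating, there are $K^{(m)}\in\cF^{\ge d+m}$ with $e^{\{K^{(m)},\cdot\}}N_1=N_2$ for every $m$, whence $N_2-N_1\in\bigcap_m\cF^{\ge d+m}=\{0\}$, i.e.\ $N_2=N_1$. This gives $e^{\{G,\cdot\}}H=e^{\{S,\cdot\}}H$, and since $e^{\{S,\cdot\}}H=D_\omega+Z_H$ with the quadratic part split off uniquely, the normal form $Z_H:=\wtZ\in\cK^{\ge2}$ is well defined.

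\emph{Main obstacle.} The existence and assembly parts form the classical Lie-series Birkhoff scheme and are essentially routine once the filtration bookkeeping of \rp{degree decompositionbis} and the convergence statements \rl{lemniscata} and \rl{compo} are available. The delicate step is uniqueness: one has to realize that the lowest homogeneous part of an arbitrary conjugating function is automatically normalized (lies in $\cK$) and hence acts trivially on $N_1$, so that the discrepancy $N_2-N_1$ gets pushed into arbitrarily high scaling order. This is precisely where the invertibility of $\ad_{D_\omega}$ on $\cR$, the splitting $\cK\cap\cR=\{0\}$, and the commutativity relations $\{\cK,\cK\}=0$ and $\{D_\omega,\cK\}=0$ all enter in an essential way.
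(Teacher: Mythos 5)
Your proof is correct and follows essentially the same route as the paper: iterative solution of the homological equation using Lemma \ref{uff}, assembly of the limit via Remark \ref{gogna} and Lemma \ref{compo}, and a degree-by-degree uniqueness argument resting on the invertibility of $\ad_{D_\omega}$ on $\cR$, the splitting $\cK\cap\cR=\{0\}$, and the fact that $\cK$ is abelian and commutes with $D_\omega$. The only (harmless) differences are organizational: you remove one homogeneous component per step, which lets you treat $\td=1$ uniformly instead of via the paper's preliminary step, and you phrase uniqueness as peeling off the lowest component of the conjugating generator rather than directly comparing homogeneous terms as the paper does.
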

\begin{proof}
For item $(1)$ Let us first consider the case $\td\ge 2$. we start with a Hamiltonian $H_0\in \cF$ of the form $D_\omega+Z_0+  P_0$ with $P_0\in \cF^{\ge d}$ and we iteratively construct a sequence of generating functions
$S_i \in  \cR^{\ge 2i+d}$ and Hamiltonians $H_i$  by setting 
\[
\{D_\omega,S_i\} =\Pi^{\cR}  H_{i}\,,\quad H_{i+1}= e^{\{S_i,\cdot\} }H_{i}\,.
\]
We now show inductively that  for each $i$
\[
\Pi^{(< 2i +d)} \Pi^{\cR} H_i =0 \,, \quad S_i \in  \cR^{\ge 2i+ d}
\]
so in other words
\[
H_i = D_\omega+ Z_i + P_i\,,\quad Z_i \in \cK\cap \cF^{\le 2i +d-1} \,,\quad P_i\in \cF^{\ge 2i+d} \,.
\]
For $i=0$ we just set $Z_0= \Pi^{< d} Z$ and $P_0= R+ \Pi^{\ge d} Z$.
By induction we assume that $P_i \in \cF^{\ge 2i+d}$. Then by  Lemma \ref{uff}, $S_i \in \cF^{\ge 2i +d}$.
\begin{align*}
	e^{\{S_i,\cdot\} }H_{i} &=D_\omega+ Z_i + P_i + \{S_i, D_\omega\} + \sum_{h=2}^\infty \frac{\ad_{S_i}^{h-1}}{h!} \{S_i,D_\omega \}
	+ \sum_{k=1}^\infty \frac{\ad_{S_i}^k}{k!}(Z_i + P_i) \\
	&= D_\omega+ Z_i + \Pi^\cK P_i  - \sum_{k=1}^\infty \frac{\ad_{S_i}^{k}}{(k+1)!}\Pi^{\cR}  P_{i}  + \sum_{k=1}^\infty \frac{\ad_{S_i}^k}{k!}(Z_i + P_i)\,.
\end{align*}
So we may set
\[
Z_{i+1}:= Z_i+
\Pi^{(< 2i +d+2)}\Pi^\cK P_i \,,\quad P_{i+1}= e^{\{S_i,\cdot\} }H_{i}- D_\omega - Z_{i+1} 
\]
and verify that $P_{i+1}\in \cF^{\ge 2i+d+2}$ by applying Proposition \ref{degree decompositionbis} and noticing that, since $4i +2\td \ge 2i +\td+2$, the term of lowest degree is $\{S_i,Z\}$.
\\
 Then
we set
\[
\widetilde Z= \lim_{i\to \infty} Z_i = Z_0+ \sum_{i=0}^\infty \Pi^{\le 2i +d+1}\Pi^\cK P_i = Z_0+ \Pi^\cK \sum_{i=0}^\infty \Pi^{(< 2i +d+2)} \Pi^{(\ge 2i +d) }P_i\,,
\] which is well defined by Remark \ref{gogna}. Finally by Lemma \ref{compo} we can define $S\in \cF^{\ge \td}$ so that
\[
e^{\{S,\cdot\}}= \prod_{i=0}^\infty e^{\{S_i,\cdot\}}\,.
\]
If $\td=1$ we perform a preliminary step in order to increase the degree by one and then we start the procedure explained above. 
We start with $H = D_\omega+ P$, with $P:= R+ Z$ .  As before we fix $S\in \cR^{\ge 1}$ so that
$\{D_\omega ,S\}=\Pi^\cR H$ we set
\begin{align*}
H_0:=	e^{\{S,\cdot\} }H = D_\omega+ \Pi^\cK P  - \sum_{k=1}^\infty \frac{\ad_{S}^{k}}{(k+1)!}\Pi^{\cR}  P  + \sum_{k=1}^\infty \frac{\ad_{S}^k}{k!}P\,.
\end{align*}
then fixing $Z_0:= \Pi^{\le 2} \Pi^\cK P$ and $P_0:= H_0-D_\omega-Z_0$ we are in the setting of the previous case.


Regarding item $(2)$ we remark that If $e^{\{S_1,\cdot\}}$ transforms a normal form $D_{\om}+K_1$ into a normal form  $D_{\om}+K_2$, then 
$$
e^{\{S_1,\cdot\}}(D_{\omega}+K_1)= D_{\omega}+K_1+\sum_{h=1}^\infty \frac{\ad_{S_1}^{h-1}}{h!} \{S_1,D_\omega+K_1 \}=D_{\omega}+K_2.
$$
Since $\cK=\cK^{\ge 2}$ and $S\in \cH^{\ge 1}$, comparing homogeneous terms of  degree $1$ we get $\{S_1,D_\omega\}=0$ so we should have $S_1^{(1)}\in\cK$ which can only be possible if $S_1^{(1)}=0$.
Comparing homogeneous terms of degree $2$, we obtain $K_1^{(2)}-K_2^{(2)}+\{S_1^{(2)}, D_{\om}\}=0$. Recalling that $\{S_1^{(2)}, D_{\om}\}\in \cR$ we have  $K_1^{(2)}-K_2^{(2)}\in \cK\cap\cR$ is zero and $S_1^{(2)}\in\cK$. Assuming that $K_1^{(j)}=K_2^{(j)}\in \cK$ and $S_1^{(j)}\in\cK$ for $2\leq j\leq m$. Then we have
\begin{eqnarray*}
K_1^{(m+1)}-K_2^{(m+1)}&+&\{S_1^{(m+1)}, D_{\om}\}+\sum_{h=2}^\infty \frac{1}{h!}\left(\sum_{j_1+\cdots+j_l=m+1}\{S_1^{(j_1)},\{S_1^{(j_2)},\cdots \{S_1^{(j_h)},D_\omega\}\}\}\right.\\
&+&\left.\sum_{j_1+\cdots+j_h+j_{h+1}=m+1}\{S_1^{(j_1)},\{S_1^{(j_2)},\cdots \{S_1^{(j_h)},K_1^{(j_{h+1})} \}\}\}\right)=0
\end{eqnarray*}
By induction and since $D_{\om}$ is non resonant, then both sums above are zero. Hence, we the same reasoning as above, we obtain $K_1^{(m+1)}=K_2^{(m+1)}\in \cK$ and $S_1^{(m+1)}\in\cK$. The result follows from \rp{compo}.

\end{proof}
\begin{cor}\label{ofelia}
For any $H$ as in (\ref{Birkhof}), if for $G\in \cF^{\ge 1}$ one has $e^{\{G,\cdot\}} H= D_\omega+ Z +R$ with $R\in \cF^{\ge d_1}$ then $Z- Z_H\in \cK^{\ge d_1}$.
\end{cor}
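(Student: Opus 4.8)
The plan is to deduce this directly from the Birkhoff normal form Proposition \ref{giova}, applied a second time to the partially normalized Hamiltonian $e^{\{G,\cdot\}}H$, together with the group property of formal symplectic changes of variables (Lemma \ref{BCH}) and the uniqueness statement of Proposition \ref{giova}(2). The key remark is that $e^{\{G,\cdot\}}H = D_\omega + Z + R$ is itself of the form \eqref{Birkhof}: its normal form part $Z$ lies in $\cK = \cK^{\ge 2}$ and its remainder $R$ lies in $\cF^{\ge d_1}$, and we may assume $d_1\ge 1$ (otherwise the assertion $Z - Z_H\in\cK^{\ge d_1}=\cK$ is trivial, both $Z$ and $Z_H$ being in $\cK$).

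First I would apply Proposition \ref{giova}(1) to $D_\omega + Z + R$, obtaining $S'\in\cF^{\ge d_1}$ with
\[
e^{\{S',\cdot\}}\bigl(D_\omega + Z + R\bigr) = D_\omega + \wtZ\,,\qquad \wtZ - Z\in\cK^{\ge d_1}\,.
\]
Composing operators, $\bigl(e^{\{S',\cdot\}}e^{\{G,\cdot\}}\bigr)H = D_\omega + \wtZ \in \cK$. Since $S'$ and $G$ both lie in $\cF^{\ge 1}$, Lemma \ref{BCH} produces $G'\in\cF^{\ge 1}$ with $e^{\{S',\cdot\}}e^{\{G,\cdot\}} = e^{\{G',\cdot\}}$, hence $e^{\{G',\cdot\}}H = D_\omega + \wtZ\in\cK$. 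The uniqueness part, Proposition \ref{giova}(2), then forces $e^{\{G',\cdot\}}H = e^{\{S,\cdot\}}H = D_\omega + Z_H$, so that $\wtZ = Z_H$. Therefore $Z - Z_H = Z - \wtZ\in\cK^{\ge d_1}$, which is exactly the assertion.

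I do not expect a genuine obstacle: the real content already sits in Proposition \ref{giova} and Lemma \ref{BCH}. The only points requiring care are checking that $D_\omega + Z + R$ meets the hypotheses of Proposition \ref{giova} — which amounts to recalling $\cK = \cK^{\ge 2}$ and that $R$ has scaling order $\ge d_1\ge 1$ — and observing, via Lemma \ref{BCH}, that the two-step procedure $e^{\{S',\cdot\}}\circ e^{\{G,\cdot\}}$ is again a single formal symplectic change of variables generated by an element of $\cF^{\ge 1}$, so that the uniqueness statement of Proposition \ref{giova}(2) genuinely applies to it.
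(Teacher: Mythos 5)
Your proof is correct and follows essentially the same route as the paper: renormalize $D_\omega+Z+R$ via Proposition \ref{giova}(1), combine the two exponentials into one, and invoke the uniqueness statement of Proposition \ref{giova}(2). The only cosmetic difference is that you cite Lemma \ref{BCH} for the composition where the paper cites Lemma \ref{compo}; both work, and your added checks (that $d_1\ge 1$ may be assumed and that the hypotheses of Proposition \ref{giova} are met) are sensible.
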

\begin{proof}
By Proposition \ref{giova} $(1)$ there exists $S\in \cF^{\ge d_1}$ which normalizes $D_\omega+ Z +R$ to $D_\omega+ \widetilde Z$ with $\widetilde Z-Z\in \cF^{\ge d_1}$. By Lemma \ref{compo} there exists $G_1\in \cF$ such that $e^{\{G_1,\cdot\}}=  e^{\{S,\cdot\}}e^{\{G,\cdot\}}$. Since $G_1$ puts $H$ in normal form, by Proposition \ref{giova} $(2)$, $\widetilde Z= Z_H$ and the result follows.
\end{proof}
\begin{defn}
	 We say that $H$ is formally linearizable if $Z_H=0$.
\end{defn}
\begin{cor}\label{keypoint}
If $H$ is formally linearizable and there exists a formal symplectic change of variables with $e^{\{S,\cdot\}} H= D_\omega+ Z +R$ with $R\in \cF^{\ge d}$ and $Z \in \cK^{< d}$ (this last condition does not imply any loss of generality) then $Z=0$.
\end{cor}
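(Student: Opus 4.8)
The plan is to reduce everything to Corollary \ref{ofelia}, which already carries the real content; what is left is only a short bookkeeping argument about scaling orders. First I would dispose of the parenthetical claim that assuming $Z\in\cK^{<d}$ costs no generality: starting from any normalizing transformation with $e^{\{S,\cdot\}}H = D_\omega + \widetilde Z + \widetilde R$, where $\widetilde Z\in\cK$ and $\widetilde R\in\cF^{\ge d}$, one simply splits $\widetilde Z = \Pi^{(<d)}\widetilde Z + \Pi^{(\ge d)}\widetilde Z$ and sets $Z := \Pi^{(<d)}\widetilde Z\in\cK^{<d}$, $R := \widetilde R + \Pi^{(\ge d)}\widetilde Z\in\cF^{\ge d}$, which puts the same identity in the form demanded by the statement.

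Next I would invoke Corollary \ref{ofelia} with its parameter $d_1$ taken equal to $d$: since $e^{\{S,\cdot\}}H = D_\omega + Z + R$ with $R\in\cF^{\ge d}$, the corollary gives $Z - Z_H\in\cK^{\ge d}$. By the definition of formal linearizability, $Z_H = 0$, hence $Z\in\cK^{\ge d}$. Combining this with $Z\in\cK^{<d}$ and using that the decomposition $\cF = \cF^{<d}\oplus\cF^{\ge d}$ from Definition \ref{scialla} restricts to $\cK$ (so that $\cK^{<d}\cap\cK^{\ge d} = \{0\}$), we conclude $Z = 0$.

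I do not expect any genuine obstacle here: the statement is essentially a one-line consequence of Corollary \ref{ofelia} once the scaling orders are matched up, the only points needing a moment's care being the identification of $d_1$ with $d$ and the triviality of $\cK^{<d}\cap\cK^{\ge d}$.
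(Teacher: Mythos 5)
Your proof is correct and is exactly the paper's argument: the paper proves this corollary by simply citing Corollary \ref{ofelia}, and you have correctly filled in the routine details (taking $d_1=d$, using $Z_H=0$, and noting $\cK^{<d}\cap\cK^{\ge d}=\{0\}$). Nothing more is needed.
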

\begin{proof}
	This follows directly from Corollary \ref{ofelia}.
\end{proof}

If we know a priori that $H$ is formally linearizable then we get a faster growth of the degree of $P_i$.

\begin{lemma} \label{Birlinform}.
	If  $H_0\in \cF$ of the form $D_\omega+ P_0$ with $P_0\in \cF^{\ge 1}$ is formally linearizable then the sequence of generating functions 
\[
\{D_\omega,S_i\} =\Pi^{\cR}  H_{i}\,,\quad H_{i+1}= e^{\{S_i,\cdot\} }H_{i}\,.
\]
satisfies
\[
H_i = D_\omega + P_i \,,\quad P_i\in \cF^{\ge 2^i} \,.
\]
\end{lemma}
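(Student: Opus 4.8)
The statement of Lemma \ref{Birlinform} should be proved by induction on $i$, running the same iterative Birkhoff scheme as in Proposition \ref{giova}, but exploiting the hypothesis of formal linearizability (i.e. $Z_H=0$) to kill, at each step, not only the non-normal part but \emph{also} the would-be new normal part, thereby doubling the order of the remainder instead of merely increasing it by two.

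First I would set up the induction. For $i=0$ there is nothing to prove since $P_0\in\cF^{\ge 1}=\cF^{\ge 2^0}$. Assume $H_i = D_\omega + P_i$ with $P_i\in\cF^{\ge 2^i}$ and with \emph{no} $Z$ term present. Write $d_i:=2^i$. We choose $S_i\in\cR^{\ge d_i}$ solving $\{D_\omega,S_i\}=\Pi^\cR P_i$, which is possible and gives $S_i$ of order exactly $\ge d_i$ by Lemma \ref{uff}. Expanding as in the proof of Proposition \ref{giova},
\[
H_{i+1}=e^{\{S_i,\cdot\}}H_i = D_\omega + \Pi^\cK P_i - \sum_{k\ge 1}\frac{\ad_{S_i}^k}{(k+1)!}\Pi^\cR P_i + \sum_{k\ge 1}\frac{\ad_{S_i}^k}{k!}P_i\,.
\]
The two sums involving $\ad_{S_i}$ start at order $\ge d_i + d_i = 2d_i$ by the filtered Lie algebra property (Proposition \ref{degree decompositionbis}), since each $\ad_{S_i}$ raises the order by $d_i$ and $P_i$ has order $\ge d_i$. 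So $H_{i+1} = D_\omega + \Pi^\cK P_i + (\text{order} \ge 2d_i)$. The point is then to argue that $\Pi^\cK P_i$ is itself of order $\ge 2d_i$, i.e. that its components of order $< 2d_i$ vanish.

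Here is where formal linearizability enters, and this is the main obstacle — or rather the crux. By Lemma \ref{compo} the composition $e^{\{S_{i-1},\cdot\}}\cdots e^{\{S_0,\cdot\}}$ is a single formal symplectic map $e^{\{\cG_i,\cdot\}}$ with $\cG_i\in\cF^{\ge 1}$ conjugating $H_0$ to $H_i = D_\omega + P_i$, where $P_i = \Pi^\cK P_i + \Pi^\cR P_i$ with $\Pi^\cR P_i\in\cF^{\ge d_i}$. Applying Corollary \ref{ofelia} with $Z=\Pi^\cK P_i$, $R=\Pi^\cR P_i$ and $d_1 = d_i$, we get $\Pi^\cK P_i - Z_{H_0}\in\cK^{\ge d_i}$; since $H_0$ is formally linearizable, $Z_{H_0}=0$, hence $\Pi^\cK P_i\in\cK^{\ge d_i}$, i.e. $\Pi^{(<d_i)}\Pi^\cK P_i = 0$. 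This already shows $H_i = D_\omega + P_i$ has genuinely \emph{no} component in $\cK$ below order $d_i$, but to get order $\ge 2d_i$ I must iterate the argument one notch deeper. The clean way: fix any $d$ with $d_i \le d < 2d_i$ and show $\Pi^{(d)}\Pi^\cK P_i = 0$. Consider the partial normalization that continues the scheme from $H_i$ (as in Proposition \ref{giova}): there is $S\in\cR^{\ge d_i}$ with $\{D_\omega,S\} = \Pi^\cR P_i$, and after conjugation by $e^{\{S,\cdot\}}$ one lands on $D_\omega + Z' + R'$ with $R'\in\cF^{\ge 2d_i}$ and $Z' = \Pi^\cK P_i + (\text{order}\ge 2d_i)$, so $\Pi^{(<2d_i)}Z' = \Pi^{(<2d_i)}\Pi^\cK P_i$. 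But composing everything, $H_0$ is conjugate to $D_\omega + Z' + R'$ with $R'\in\cF^{\ge 2d_i}$, and by Corollary \ref{keypoint} (applicable since $H_0$ is formally linearizable) we conclude $\Pi^{(<2d_i)}Z' = 0$ — provided we first truncate $Z'$ so that $Z'\in\cK^{<2d_i}$, which loses no generality exactly as stated in Corollary \ref{keypoint}. Hence $\Pi^{(<2d_i)}\Pi^\cK P_i = 0$, and feeding this back into the expansion of $H_{i+1}$ above gives $H_{i+1} = D_\omega + P_{i+1}$ with $P_{i+1}\in\cF^{\ge 2d_i} = \cF^{\ge 2^{i+1}}$ and with no surviving $\cK$-term, closing the induction.

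I expect the only delicate point to be bookkeeping the orders correctly in the two-level argument — making sure that the extra factor of $\ad_{S_i}$ really does buy a full $d_i$ (not just $1$) in the remainder, which is exactly the content of \eqref{a+b}, and making sure the truncation needed to invoke Corollary \ref{keypoint} is harmless. Everything else is a direct reuse of Proposition \ref{giova}, Corollary \ref{ofelia} and Corollary \ref{keypoint}.
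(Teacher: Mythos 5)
Your proof is correct and follows essentially the same route as the paper: the same inductive expansion of $e^{\{S_i,\cdot\}}H_i$, the observation via Proposition \ref{degree decompositionbis} that the $\ad_{S_i}$-series are of order $\ge 2^{i+1}$, and the appeal to Corollary \ref{keypoint} (through the composition of the symplectic maps, Lemma \ref{compo}) to kill $\Pi^{(<2^{i+1})}\Pi^\cK P_i$. The only difference is cosmetic: your preliminary use of Corollary \ref{ofelia} to get $\Pi^\cK P_i\in\cK^{\ge d_i}$ is redundant (it already follows from the induction hypothesis $P_i\in\cF^{\ge 2^i}$), and the paper compresses your ``two-level'' step into a one-line citation of Corollary \ref{keypoint}.
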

\begin{proof}
	By induction we assume that $P_i \in \cF^{\ge 2^i}$. Then by construction $S_i \in \cF^{\ge 2^i}$.
	\begin{align*}
	e^{\{S_i,\cdot\} }H_{i} &=D_\omega + P_i + \{S_i, D_\omega\} + \sum_{h=2}^\infty \frac{\ad_{S_i}^{h-1}}{h!} \{S_i,D_\omega \}
	+ \sum_{k=1}^\infty \frac{\ad_S^k}{k!} P_i \\
	&= D_\omega+  \Pi^\cK P_i  - \sum_{k=1}^\infty \frac{\ad_{S_i}^{k}}{(k+1)!}\Pi^{\cR}  P_{i}  + \sum_{k=1}^\infty \frac{\ad_{S_i}^k}{k!} P_i\,\\
	&=: D_\omega+ \Pi ^{<2^{i+1}} \Pi^\cK P_i + P_{i+1}\,.
	\end{align*}
	By Proposition \ref{degree decompositionbis} the two series in the formula above are in $\cF^{2^{i+1}}$ so to prove our claim we only need to show $\Pi^\cK \Pi ^{<2^{i+1}} P_i =0$. This is a consequence of Corollary \ref{keypoint}.
\end{proof}
\section{Regular Hamiltonians } 
We now revisit the formal Birkhoff normal form in the case of analytic Hamiltonians. We start by introducing an appropriate functional setting.
\subsection{Spaces of Hamiltonians}
Let us consider the weighted space
\[
\th_s= \th_{s,p,\theta} :=\set{ u\in \ell^2(\Z,\C): \quad |u|^2_s:=\sum_{j\in\Z}\jap{j}^{2p}e^{2s\jap{j}^\theta}|u_j|^2<\infty   }
\]
where $\jap{j}:=\max(|j|,1)$, $p\geq \frac12$ and $0<\theta\leq1$. The spaces $\th_{s,p,\theta}$ are contained in $\ell^2(\C)$, so we endow them with the  standard symplectic structure  coming from the Hermitian product on $\ell^2(\C)$. 
\\
We identify $\ell^2(\C)$ with $\ell^2(\R)\times \ell^2(\R)$ through $u_j= \pa{x_j+ i y_j}/\sqrt{2}$ and induce on $\ell^2(\C)$ the structure of a real symplectic Hilbert space\footnote{We recall that given a  complex Hilbert space $H$ with a Hermitian product $(\cdot,\cdot)$, its realification is a real symplectic Hilbert space with scalar product  and symplectic form given by
	\[
	\langle u,v\rangle = 2{\rm Re}(u,v)\,,\quad  \omega(u,v)= 2{\rm Im}(u,v)\,.
	\] } by setting, for any $(u^{(1)}, u^{(2)}) \in \ell^2(\C)\times \ell^2(\C)$,
\[
\langle u^{(1)},u^{(2)}\rangle = \sum_j \pa{x_j^{(1)}x_j^{(2)}+ y_j^{(1)}y_j^{(2)}} \,,\quad \omega(u^{(1)},u^{(2)})= \sum_j \pa{y_j^{(1)}x_j^{(2)}- x_j^{(1)}y_j^{(2)}},
\] 
which are the standard scalar product and symplectic form $\Omega= \sum_j dy_j\wedge d x_j$. \\

	Given $H\in \cF$, we define its majorant
as
\begin{equation}\label{betta}
\und H(u)  = \sum_{\substack{\bal,\bbt\in\N^\Z\,, \\
		|\bal|+|\bbt|<\infty} }|H_{\bal,\bbt}|u^\bal \bar u^\bbt\,.
\end{equation}
\begin{defn}[M-regular Hamiltonians]\label{Hreg}
	For $ r>0$,  let
	$\cH_{r,s}$
	be the subspace of $\cF$ of formal power series $H$ such that  $\und{H}$ is pointwise  absolutely convergent on 
	$B_r(\th_{s}) $, the ball of radius $r$ centered at the origin of $\th_s$, 
and 
$$
|H|_{ B_{r}(\th_{s})}
\equiv
\|H\|_{r,s}
:=
r^{-1} \pa{\sup_{\abs{u}_{\th_{s}}\leq r} 
	\abs{{X}_{{\underline H}}}_{\th_{s}} } < \infty\,.
$$
\end{defn}

\noindent

{\sl
Note that in $\cF$ one has $H(0)=0$ so this is actually a norm.
}

\smallskip

We shall show in the next subsection that $H\in \cH_{r,s}$  guarantees that   the Hamiltonian flow of $H$ exists at least locally and generates a symplectic transformation on $\th_s$, i.e. $\th_s$ is an invariant subspace for the dynamics. 

\begin{thm}[Main]
	Consider a Hamiltonian of the form
	\[
	\sum_{j\in \Z} \omega_j |u_j|^2 + P_0\,,\quad P_0\in \cH_{\tr,s_0} \cap \cF^{\ge 1}
	\]
	where $\omega\in 	 \dg$. Assume that there exists $G\in \cF^{\ge 1}$ such that 
	\[
	e^{\{G,.\}} H = \sum_{j\in \Z} \omega_j |u_j|^2\,,
	\]
	then there exists $r_1<\tr$,  $s_1> s_0$ and a close to identity change of variables $\Psi$ 
	\[
	\Psi: B_{r_1}(\th_{s_1}) \to \th_{s_1}
	\]
	such that $H\circ \Psi = \sum_{j\in \Z} \omega_j |u_j|^2$.
\end{thm}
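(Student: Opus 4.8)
The plan is to realise, at the analytic level, the quadratically convergent Birkhoff scheme of \rl{Birlinform}, which is available precisely because $H$ is formally linearizable: that lemma tells us that the iteration $\{D_\omega,S_i\}=\Pi^\cR P_i$, $H_{i+1}=e^{\{S_i,\cdot\}}H_i=D_\omega+P_{i+1}$ keeps the remainder in $\cF^{\ge 2^i}$, with $S_i\in\cF^{\ge 2^i}$, $\Pi^\cK P_i\in\cK^{\ge 2^{i+1}}$, and
\[
P_{i+1}=\Pi^\cK P_i+\sum_{k\ge 1}\ad_{S_i}^k\Big(\tfrac1{k!}P_i-\tfrac1{(k+1)!}\Pi^\cR P_i\Big)\,.
\]
Thus every term of $P_{i+1}$ other than $\Pi^\cK P_i$ carries at least one factor $S_i$ (hence is ``quadratically small''), and $\Pi^\cK P_i$ lives in a space of order $\ge 2^{i+1}$. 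The whole point is to turn this into genuine analytic contraction on the Gevrey scale $\th_{s,p,\theta}$.

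First I would normalise the size of the perturbation. Since $P_0\in\cF^{\ge1}$ vanishes to order three, $X_{\und{P_0}}$ vanishes to order two and $\|P_0\|_{\rho,s_0}\le C_{P_0}\rho\to 0$ as $\rho\to0$. The rescaling $u\mapsto\rho u$ leaves $D_\omega$ invariant and, after dividing the Hamiltonian by $\rho^2$, leaves the norm $\|\cdot\|$ invariant; so, shrinking $\tr$, I may assume $P_0\in\cH_{2,s_0}\cap\cF^{\ge1}$ with $\eps_0:=\|P_0\|_{1,s_0}$ as small as I please in terms of $\gamma,p,\theta$ and the fixed sequences chosen below. The rescaling is undone at the very end; this is what produces the final radius $r_1<\tr$, while the Gevrey index will have increased to some $s_1>s_0$.

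Now fix summable positive sequences $\sigma_i,\delta_i\sim i^{-2}$ with $\sum_i\sigma_i=:s_1-s_0$ and $\sum_i\delta_i=\tfrac12$, and set $s_i=s_0+\sum_{j<i}\sigma_j$, $r_i=1-\sum_{j<i}\delta_j$. Running the scheme from $H_0=D_\omega+P_0$, the analytic inputs — all contained in the Appendix — are: (i) \emph{homological equation}: since $\omega\in\dg$, the elementary bound $\prod_n(1+|\ell_n|^2\jap{n}^2)\le C(\sigma)\,e^{\sigma\sum_n|\ell_n|\jap{n}^\theta}$, valid for every $\sigma>0$ (from $\log(1+x)\le C_\theta x^\theta$ together with $\theta<1$, with $C(\sigma)\le e^{C\sigma^{-a(\theta)}}$), turns \eqref{diofantinoBIS} into $\|S_i\|_{r_i,s_{i+1}}\le C\gamma^{-1}C(\sigma_i)\,\eps_i$, the only cost being the small Gevrey shift $\sigma_i$; (ii) \emph{monotonicity and homogeneity}: $\|\cdot\|_{r,s}$ is non-increasing in $s$, and $\|F\|_{r',s}\le(r'/r)^d\|F\|_{r,s}$ for $F\in\cF^{\ge d}$, so that passing from $(r_i,s_i)$ to $(r_{i+1},s_{i+1})$ costs nothing on $S_i,P_i\in\cF^{\ge2^i}$ and \emph{gains} a factor $(r_{i+1}/r_i)^{2^{i+1}}$ on the offending term $\Pi^\cK P_i\in\cF^{\ge2^{i+1}}$; (iii) \emph{flow and Poisson bracket estimates}: for $\eps_i$ small, $e^{\{S_i,\cdot\}}$ is the pullback by a well-defined close-to-identity analytic symplectic map $\Phi_i:B_{r_{i+1}}(\th_{s_{i+1}})\to B_{r_i}(\th_{s_{i+1}})\subset B_{r_i}(\th_{s_i})$, with $\|\Phi_i-\id\|\lesssim r_i^{-1}\|S_i\|_{r_i,s_{i+1}}$, and, with $\eps_i:=\|P_i\|_{r_i,s_i}$,
\[
\eps_{i+1}\ \le\ \mu_i\,\eps_i+L_i\,\eps_i^2\,,\qquad \mu_i:=(r_{i+1}/r_i)^{2^{i+1}}\,,\quad L_i:=C\gamma^{-1}C(\sigma_i)\,r_i^{-2}\,.
\]

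It remains to run this recursion. The contraction rates $\mu_i\approx e^{-2^{i+1}\delta_i/r_i}$ are super-exponentially small, while $\log L_i$ grows only polynomially in $i$; the bookkeeping point is that $\prod_{j<i}\mu_j/\mu_i\to 0$ super-exponentially, which lets one check inductively that, for $\eps_0$ small enough, $\eps_i\le 2\eps_0\prod_{j<i}\mu_j\to0$ faster than any exponential — so the smallness bootstrap closes and $\sum_i\|\Phi_i-\id\|<\infty$. Hence the partial compositions $\Psi_i:=\Phi_0\circ\cdots\circ\Phi_{i-1}$ converge uniformly on $B_{r_1}(\th_{s_1})$ (note $\th_{s_1}\subset\th_{s_i}$ for all $i$, and each $\Phi_j$ preserves $\th_{s_1}$ by the monotonicity in (ii)) to a close-to-identity analytic symplectic map $\Psi:B_{r_1}(\th_{s_1})\to\th_{s_1}$; since $H\circ\Psi_i=H_i=D_\omega+P_i$ with $\|P_i\|_{r_1,s_1}\to0$, passing to the limit gives $H\circ\Psi=D_\omega$ on $B_{r_1}(\th_{s_1})$, and undoing the initial rescaling yields the statement with $r_1<\tr$, $s_1>s_0$. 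The step I expect to be most delicate is (i) — extracting from the Bourgain-type condition \eqref{diofantinoBIS} a loss that is \emph{only} a small, summable shift of the Gevrey index (plus a constant no worse than $e^{C\sigma_i^{-a}}$) in the $M$-regular Gevrey norm of \rd{Hreg} — together with the convergence bookkeeping just described, whose viability crucially rests on the initial rescaling, which makes $\eps_0$ arbitrarily small while keeping all loss constants controlled uniformly in $i$ apart from $C(\sigma_i)$.
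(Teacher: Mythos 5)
Your proposal follows the same route as the paper: the quadratic scheme of \rl{Birlinform} run on a scale of domains $r_i=r_{i-1}-\rho_{i-1}$, $s_i=s_{i-1}+\s_{i-1}$ with $\rho_i,\s_i\sim i^{-2}$, the recursion $\eps_{i+1}\le (r_{i+1}/r_i)^{2^{i+1}}\eps_i + C\g^{-1}e^{C\s_i^{-a}}\eps_i^2$ obtained from \rl{gasteropode}, \rl{ham flow} and \rl{estim-cohom}, and convergence of the composed flows; the initial smallness via rescaling is exactly the paper's use of \rl{gasteropode} to pass from $\tr$ to a small $r_0$, and your double-exponential bound $\eps_i\le 2\eps_0\prod_{j<i}\mu_j$ is a sharper variant of the paper's $\eps_i\le\eps_0 e^{-\chi^i}$. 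The bookkeeping closes for the reason you give: the contraction $\mu_i\approx e^{-2^{i+1}\delta_i}$ is super-exponential while the losses $e^{C\s_i^{-a}}$ grow only like $e^{Ci^{2a}}$.

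The one point that is wrong as stated is your justification of step (i). The elementary bound $\prod_n(1+|\ell_n|^2\jap{n}^2)\le e^{C\s^{-a}}e^{\s\sum_n|\ell_n|\jap{n}^\theta}$ with $\ell=\al-\bt$ is true, but it does not by itself give \rl{estim-cohom}, because the Gevrey gain actually available in the norm of \rd{Hreg} is not $e^{\s\sum_n|\al_n-\bt_n|\jap{n}^\theta}$: from \req{persico} the weight ratio is $e^{-\s(\sum_i\jap{i}^\theta(\al_i+\bt_i)-2\jap{j}^\theta)}$, where $j$ may be the largest frequency in the support. By Bourgain's inequality (\rl{constance generalbis}) what survives after subtracting $2\jap{j}^\theta$ is only $(2-2^\theta)\sum_{l\ge3}\na_l^\theta$, i.e.\ the \emph{two largest} entries of $\na(\al+\bt)$ are missing, whereas the divisor $\prod_n(1+|\ell_n|^2\jap{n}^2)$ can be as large as a power of $\na_1$. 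Closing this gap is the genuine content of the appendix: one splits according to whether $|\sum_i(\al_i-\bt_i)i^2|\ge 2\sum_i|\al_i-\bt_i|$ (in which case $|\omega\cdot(\al-\bt)|\ge1$ by \req{fiandre} and there is no small divisor at all, using $\omega_j=j^2+O(1)$), and otherwise uses momentum conservation to prove $|m_1|\le 7\sum_{l\ge3}\na_l^2$ (\rl{orbo}), whence $\sum_i|\al_i-\bt_i|\jap{i}^{\theta/2}$ --- note the exponent $\theta/2$, not $\theta$ --- is controlled by the available tail (\rl{constance 2 gen}). Since you cite the appendix lemmas as inputs, your proof of the theorem itself stands; but if the parenthetical argument were intended as a proof of \rl{estim-cohom}, it would not work, and the mechanism that actually makes it work (superlinear dispersion plus momentum conservation) deserves to be named rather than replaced by a product estimate.
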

\subsection{Poisson structure and homological equation}
The following Lemmata are proved in \cite{BMP18}  under the extra assumption of mass conservation, we discuss the proof in our slightly more general setting in the apendix.
\begin{lemma}\label{gasteropode}
	If $H\in \cH_{r,s}\cap \cF^{\ge \td}$, then for all $\rs\le r$ one has
	\begin{equation*}
	\norm{H}^{\wc}_{\rs,s} \le \pa{\frac{\rs}{r}}^{\td} \norm{H}^{\wc}_{r,s}\,.
	\end{equation*}
\end{lemma}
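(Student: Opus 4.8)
The plan is to exploit the homogeneity of the homogeneous pieces of $H$ together with the monotonicity built into the Cauchy majorant. First I would write $H=\sum_{d\ge\td}H^{(d)}$ with $H^{(d)}:=\Pi^{(d)}H\in\cF^d$ homogeneous of degree $d+2$, so that $X_{\und H}=\sum_{d\ge\td}X_{\und{H^{(d)}}}$, where each $X_{\und{H^{(d)}}}$ is, componentwise, homogeneous of degree $d+1$: $X_{\und{H^{(d)}}}(\lambda u)=\lambda^{d+1}X_{\und{H^{(d)}}}(u)$ for $\lambda>0$, $u\in\th_s$.

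The crucial preliminary step is to reduce the supremum defining $\norm{\cdot}_{r,s}$ to sequences with non-negative entries. Given $u\in\th_s$ set $\hat u:=(|u_j|)_{j\in\Z}\in\th_s$; then $|\hat u|_s=|u|_s$, and since $\partial_{\bar u_j}\und H$ has non-negative Taylor coefficients one gets $|(X_{\und H})_j(u)|=|\partial_{\bar u_j}\und H(u)|\le(\partial_{\bar u_j}\und H)(\hat u)=(X_{\und H})_j(\hat u)$ for every $j$; as $|\cdot|_s$ is a weighted $\ell^2$ norm, monotone under componentwise domination, this gives $|X_{\und H}(u)|_s\le|X_{\und H}(\hat u)|_s$, and the same estimate holds with $X_{\und H}$ replaced by any partial sum $\sum_{d\le D}X_{\und{H^{(d)}}}$. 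In particular, evaluated at a non-negative $v$ in the ball $B_r(\th_s)$, all the quantities $(X_{\und{H^{(d)}}})_j(v)$ are non-negative numbers with $\sum_{d\ge\td}(X_{\und{H^{(d)}}})_j(v)=(X_{\und H})_j(v)$, so every series manipulated below has non-negative terms and converges.

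With this in hand the scaling is immediate. Fix $\rs\le r$, set $\lambda:=\rs/r\le 1$, and let $u$ have $u_j\ge 0$ and $|u|_s\le\rs$; then $v:=u/\lambda$ has $v_j\ge 0$ and $|v|_s\le r$, and using $\lambda^{d+1}\le\lambda^{\td+1}$ for $d\ge\td$,
\[
0\le(X_{\und H})_j(u)=\sum_{d\ge\td}\lambda^{d+1}(X_{\und{H^{(d)}}})_j(v)\le\lambda^{\td+1}\sum_{d\ge\td}(X_{\und{H^{(d)}}})_j(v)=\lambda^{\td+1}(X_{\und H})_j(v)
\]
for every $j$, hence $|X_{\und H}(u)|_s\le\lambda^{\td+1}|X_{\und H}(v)|_s\le\lambda^{\td+1}r\,\norm{H}_{r,s}$. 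Taking the supremum over admissible $u$ and recalling $\norm{H}_{\rs,s}=\rs^{-1}\sup_{|u|_s\le\rs}|X_{\und H}(u)|_s$ yields $\norm{H}_{\rs,s}\le\rs^{-1}\lambda^{\td+1}r\,\norm{H}_{r,s}=(\rs/r)^{\td}\norm{H}_{r,s}$, since $\rs^{-1}\lambda^{\td+1}r=\lambda^{\td}$.

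The main obstacle --- really the only point requiring care --- is the bookkeeping underlying the reduction to non-negative arguments: one must verify that pointwise absolute convergence of $\und H$ on $B_r(\th_s)$ forces each $\und{H^{(d)}}$, hence each $X_{\und{H^{(d)}}}$, to be defined on $B_r(\th_s)$ with $\sum_{d\ge\td}X_{\und{H^{(d)}}}$ converging to $X_{\und H}$, so that the term-by-term estimates and the interchange of $\sum_d$ with $|\cdot|_s$ are legitimate. After restricting to non-negative arguments this is a routine monotone-convergence argument for series of non-negative numbers, but it is the step that genuinely uses the majorant structure rather than mere analyticity; note that the slicker-looking route $\norm{H}_{\rs,s}\le\sum_d\norm{H^{(d)}}_{\rs,s}=\sum_d(\rs/r)^d\norm{H^{(d)}}_{r,s}$ is a dead end, since the triangle inequality provides no bound of $\sum_d\norm{H^{(d)}}_{r,s}$ in terms of $\norm{H}_{r,s}$.
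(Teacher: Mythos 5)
Your proof is correct and rests on the same mechanism as the paper's: each monomial of $H$ of degree $|\bal|+|\bbt|\ge\td+2$ contributes a factor $(\rs/r)^{|\bal|+|\bbt|-2}\le(\rs/r)^{\td}$ to the norm, and the positivity of the majorant coefficients lets you compare termwise. The paper packages exactly this into the weights $c^{(j)}_{r,s}$ of \eqref{persico} and the comparison criterion of Lemma \ref{stantuffo}, whereas you rederive it directly from Definition \ref{Hreg} via homogeneity and the reduction to non-negative arguments --- a presentational difference only.
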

\begin{lemma}\label{genoveffa}
	If $H\in \cH_{r,s}$, then for all $s_1\ge s$ one has
	\begin{equation*}
	\norm{H}^{\wc}_{r,s_1} \le \norm{H}^{\wc}_{r,s}\,.
	\end{equation*}
\end{lemma}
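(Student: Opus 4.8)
The plan is the standard ``majorant plus rescaling'' argument: bound $X_{\und H}$ on the smaller ball $B_r(\th_{s_1})$ by comparing it, monomial by monomial, with its value at a suitably rescaled point of $B_r(\th_s)$, using crucially that $\und H$ has non‑negative coefficients. As a first step I would reduce to non‑negative real sequences: replacing $u=(u_j)_{j\in\Z}$ by $\hat u:=(|u_j|)_{j\in\Z}$ does not change $|u|_{s_1}$, and since $\und H$ has non‑negative coefficients one has, for every $j$, $|(X_{\und H})_j(u)|=|\partial_{\bar u_j}\und H(u)|\le\partial_{\bar u_j}\und H(\hat u)=|(X_{\und H})_j(\hat u)|$, so $|X_{\und H}(u)|_{s_1}\le|X_{\und H}(\hat u)|_{s_1}$. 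Hence it is enough to prove $|X_{\und H}(u)|_{s_1}\le r\,\norm{H}^{\wc}_{r,s}$ for every $u$ with non‑negative entries and $|u|_{s_1}\le r$.

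Given such a $u$, set $v_j:=e^{(s_1-s)\jap{j}^{\theta}}u_j\ge0$; then
\[
|v|_s^2=\sum_j\jap{j}^{2p}e^{2s\jap{j}^{\theta}}e^{2(s_1-s)\jap{j}^{\theta}}u_j^2=\sum_j\jap{j}^{2p}e^{2s_1\jap{j}^{\theta}}u_j^2=|u|_{s_1}^2\le r^2\,,
\]
so $v\in B_r(\th_s)$. Writing $\Sigma(\bal,\bbt):=\sum_k\jap{k}^{\theta}(\bal_k+\bbt_k)$, a monomial $u^{\bal}\bar u^{\bbt}$ of $H$ with $\bbt_j\ge1$ contributes to $\partial_{\bar u_j}\und H(v)$ the non‑negative term $|H_{\bal,\bbt}|\,\bbt_j\,e^{(s_1-s)(\Sigma(\bal,\bbt)-\jap{j}^{\theta})}\,u^{\bal}\bar u^{\bbt-\be_j}$, the corresponding term of $\partial_{\bar u_j}\und H(u)$ being $|H_{\bal,\bbt}|\,\bbt_j\,u^{\bal}\bar u^{\bbt-\be_j}$. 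The key elementary inequality I would establish is
\[
\Sigma(\bal,\bbt)\ge2\jap{j}^{\theta}\qquad\text{whenever } H_{\bal,\bbt}\ne0 \text{ and } \bbt_j\ge1\,.
\]
Granting it, each contributing term of $\partial_{\bar u_j}\und H(v)$ carries a factor $e^{(s_1-s)(\Sigma-\jap{j}^{\theta})}\ge e^{(s_1-s)\jap{j}^{\theta}}$, hence $e^{s\jap{j}^{\theta}}\partial_{\bar u_j}\und H(v)\ge e^{s_1\jap{j}^{\theta}}\partial_{\bar u_j}\und H(u)$ for every $j$; squaring, multiplying by $\jap{j}^{2p}$ and summing over $j$ gives $|X_{\und H}(u)|_{s_1}\le|X_{\und H}(v)|_s\le r\,\norm{H}^{\wc}_{r,s}$, and taking the supremum over $u$ and dividing by $r$ yields $\norm{H}^{\wc}_{r,s_1}\le\norm{H}^{\wc}_{r,s}$.

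It remains to prove $\Sigma(\bal,\bbt)\ge2\jap{j}^{\theta}$, which is the only non‑bookkeeping point and is exactly where momentum conservation is used (the monomial $u_0\bar u_j$ would violate it, but it is forbidden since $\pi(\be_0,\be_j)=-j\ne0$ for $j\ne0$). Write $\bbt=\bbt'+\be_j$ with $\bbt'\ge0$, so $\Sigma(\bal,\bbt)=\sum_k\jap{k}^{\theta}(\bal_k+\bbt'_k)+\jap{j}^{\theta}$, and it suffices to show $\sum_k\jap{k}^{\theta}(\bal_k+\bbt'_k)\ge\jap{j}^{\theta}$. If $|j|\ge1$, momentum conservation gives $j=\sum_k k(\bal_k-\bbt'_k)$, hence $\jap{j}=|j|\le\sum_k|k|(\bal_k+\bbt'_k)$; viewing the right‑hand side as a sum over the multiset in which $k$ appears $\bal_k+\bbt'_k$ times and using subadditivity of $t\mapsto t^{\theta}$ on $[0,\infty)$ ($0<\theta\le1$) together with $|k|^{\theta}\le\jap{k}^{\theta}$ gives $\jap{j}^{\theta}\le\sum_k\jap{k}^{\theta}(\bal_k+\bbt'_k)$. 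If $j=0$, then $\jap{j}^{\theta}=1$, and since $H_{0,0}=H_{\be_0,0}=H_{0,\be_0}=0$ and momentum conservation rules out all linear monomials one has $|\bal|+|\bbt|\ge2$, so $\bal+\bbt'\ne0$, and as $\jap{k}^{\theta}\ge1$ for the index $k$ with $\bal_k+\bbt'_k\ge1$ we get $\sum_k\jap{k}^{\theta}(\bal_k+\bbt'_k)\ge1$. The main obstacle is therefore precisely this combinatorial inequality — more exactly, noticing that it fails in general and is restored by momentum conservation; once it is in place, the substitution $v_j=e^{(s_1-s)\jap{j}^\theta}u_j$ makes everything else routine.
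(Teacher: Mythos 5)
Your proof is correct, and its skeleton is the same as the paper's: both arguments reduce the monotonicity in $s$ to the pointwise inequality $\sum_i\jap{i}^\theta(\bal_i+\bbt_i)-2\jap{j}^\theta\ge 0$ for momentum-conserving monomials with $\bal_j+\bbt_j\ne 0$ (your $\Sigma(\bal,\bbt)\ge 2\jap{j}^\theta$ is exactly the paper's \eqref{stima1}), and your explicit rescaling $v_j=e^{(s_1-s)\jap{j}^\theta}u_j$ is just a hands-on version of the coefficient comparison $c^{(j)}_{r,s_1}/c^{(j)}_{r,s}\le 1$ that the paper packages into Lemma \ref{stantuffo}. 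Where you genuinely diverge is in the proof of the combinatorial inequality itself. The paper obtains it as a corollary of Lemma \ref{constance generalbis}, the Bourgain--Yuan-type estimate $\sum_l\na_l^\theta\ge 2\na_1^\theta+(2-2^\theta)\sum_{l\ge 3}\na_l^\theta$, whose proof requires introducing the decreasing rearrangement $\na$ and a multivariable minimization of the functions $f_n$. You instead prove only the weaker statement $\Sigma\ge 2\jap{j}^\theta$, directly from momentum conservation plus subadditivity of $t\mapsto t^\theta$, which is shorter and entirely elementary; your case analysis ($|j|\ge 1$ via $j=\sum_k k(\bal_k-\bbt'_k)$, and $j=0$ via the absence of constant and linear terms) is complete, and your reduction to nonnegative sequences via the majorant is sound. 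What the paper's heavier lemma buys is the extra gain $(2-2^\theta)\sum_{l\ge 3}\na_l^\theta$, which is not needed here but is essential later in the proof of the homological-equation estimate (Lemma \ref{estim-cohom}, via \eqref{adele}); the paper proves the strong form once and reuses it, whereas your argument is self-contained but would not suffice for that later step.
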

\begin{lemma}[Poisson brakets and Hamiltonian flow]\label{ham flow}
		Let $0<\rho< r $,  and $F,G\in\cH_{r+\rho,\eta}(\th_{s})$, then 
		\begin{equation}\label{commXHK}
		\norm{\{F,G\}}_{r,s}
		\le 
		4\pa{1+\frac{r}{\rho}}
		\norm{F}_{r+\rho,s}
		\norm{G}_{r+\rho,s}\,.
		\end{equation}
For  $S\in\cH_{r+\rho,\eta}(\th_{s})$ with 
	\begin{equation}\label{stima generatrice}
	\norm{S}_{r+\rho,s} \leq\delta:= \frac{\rho}{8 e\pa{r+\rho}}. 
	\end{equation} 
	Then the time $1$-Hamiltonian flow 
	$\Psi^1_S: B_r(\th_{s})\to
	B_{r + \rho}(\th_{s})$  is well defined, analytic, symplectic with
	\begin{equation}
	\label{pollon}
	\sup_{u\in  B_r(\th_{s})} 	\norm{\Psi^1_S(u)-u}_{\th_{s}}
	\le
	(r+\rho)  \norm{S}_{r+\rho,s}
	\leq
	\frac{\rho}{8 e}.
	\end{equation}
	For any $H\in \cH_{r+\rho,s}$
	we have that
	$H\circ\Psi^1_S= e^{\set{S,\cdot}} H\in\cH_{r,s}$ and
	\begin{align}
	\label{tizio}
	\norm{\es H}_{r,s} & \le 2 \norm{H}_{r+\rho,s}\,,
	\\
	\label{caio}
	\norm{\pa{\es - \id}H}_{r,s}
	&\le  \delta^{-1}
	\norm{S}_{r+\rho,s}
	\norm{H}_{r+\rho,s}\,,
	\\
	\label{sempronio}
	\norm{\pa{\es - \id - \set{S,\cdot}}H}_{r,s} &\le 
	\frac12 \delta^{-2}
	\norm{S}_{r+\rho,s}^2
	\norm{H}_{r+\rho,s}	\end{align}
	More generally for any $h\in\N$ and any sequence  $(c_k)_{k\in\N}$ with $| c_k|\leq 1/k!$, we have 
	\begin{equation}\label{brubeck}
	\norm{\sum_{k\geq h} c_k \ad^k_S\pa{H}}_{r,s} \le 
	2 \|H\|_{r+\rho,s} \big(\|S\|_{r+\rho,s}/2\delta\big)^h
	\,,
	\end{equation}
	where  $\ad_S\pa{\cdot}:= \set{S,\cdot}$.
\end{lemma}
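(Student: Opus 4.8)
The content of the lemma is the Poisson bracket estimate \eqref{commXHK}; granting it, the flow statement \eqref{pollon} and the Lie-series bounds \eqref{tizio}--\eqref{brubeck} follow by a telescoping argument together with the standard identification of $\es H$ with the composition $H\circ\Psi^1_S$, the $M$-regular structure (\rd{Hreg}) entering only through the use of majorants. To prove \eqref{commXHK} I would first pass to majorants. From the explicit formula \eqref{well}--\eqref{begon}, replacing every coefficient by its modulus and tracking the shifts $\be_j$ exactly as in \rl{finmo}, one obtains the coefficientwise domination
\[
\underline{\{F,G\}}\ \preceq\ \sum_{j\in\Z}\pa{\frac{\partial\underline F}{\partial u_j}\frac{\partial\underline G}{\partial\bar u_j}+\frac{\partial\underline F}{\partial\bar u_j}\frac{\partial\underline G}{\partial u_j}},
\]
a series with non-negative coefficients. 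Differentiating once more and evaluating at $\und u:=(|u_j|)_j$ shows that for $|u|_{\th_s}\le r$ the field $X_{\underline{\{F,G\}}}$ at $u$ is dominated, componentwise, by a sum of terms of the form (a second derivative of $\underline F$, resp.\ of $\underline G$) times (a first derivative of $\underline G$, resp.\ of $\underline F$), all evaluated at $\und u$; taking the $\th_s$-norm, this bounds $|X_{\underline{\{F,G\}}}(u)|_{\th_s}$ by $\norm{DX_{\underline F}(\und u)}_{\cL(\th_s)}\,|X_{\underline G}(\und u)|_{\th_s}+\norm{DX_{\underline G}(\und u)}_{\cL(\th_s)}\,|X_{\underline F}(\und u)|_{\th_s}$. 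A Cauchy estimate on the ball of radius $\rho$ gives $\norm{DX_{\underline F}(\und u)}_{\cL(\th_s)}\le\rho^{-1}(r+\rho)\norm{F}_{r+\rho,s}$, while $|X_{\underline G}(\und u)|_{\th_s}\le r\norm{G}_{r,s}\le r\norm{G}_{r+\rho,s}$ by \rl{gasteropode}; adding the symmetric term and dividing by $r$ yields $\norm{\{F,G\}}_{r,s}\le 2(1+r/\rho)\norm{F}_{r+\rho,s}\norm{G}_{r+\rho,s}$, which is \eqref{commXHK} --- in fact with constant $2$, and it is this sharper form that I use below.

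\textbf{The flow.} The field $X_S$ is analytic from $B_{r+\rho}(\th_s)$ into $\th_s$ and, being dominated by $X_{\underline S}$, obeys $\sup_{B_{r+\rho}(\th_s)}|X_S|_{\th_s}\le(r+\rho)\norm{S}_{r+\rho,s}\le(r+\rho)\delta=\rho/8e$. By the Cauchy--Lipschitz theorem its flow $\Psi^t_S$ exists and depends analytically on the initial datum; as long as $\Psi^t_S(u)$ stays in $B_{r+\rho}(\th_s)$ one has $|\Psi^t_S(u)-u|_{\th_s}\le t(r+\rho)\norm{S}_{r+\rho,s}\le\rho/8e<\rho$, so by a continuity argument, for $u\in B_r(\th_s)$ the flow remains in $B_{r+\rho}(\th_s)$ for all $t\in[0,1]$ and at $t=1$ satisfies \eqref{pollon}. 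That $\Psi^1_S$ is symplectic is the usual fact that the flow of a Hamiltonian vector field preserves the (weak) symplectic form of $\ell^2(\C)$.

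\textbf{Lie series and the remaining bounds.} I would then iterate \eqref{commXHK} (sharp form) over $k$ slices of width $\rho/k$, using the monotonicity of $\norm{\cdot}_{r,s}$ in the radius (\rl{gasteropode}), the hypothesis $\rho<r$, the inequality $k^k\le e^kk!$, and the value $\delta=\rho/(8e(r+\rho))$, to obtain for every $k\ge0$
\[
\frac1{k!}\norm{\ad_S^kH}_{r,s}\ \le\ \pa{\frac{\norm{S}_{r+\rho,s}}{2\delta}}^{k}\norm{H}_{r+\rho,s}.
\]
Since $\norm{S}_{r+\rho,s}\le\delta$, the ratio $\mu:=\norm{S}_{r+\rho,s}/2\delta$ is $\le\tfrac12$, so the partial sums of $\es H=\sum_{k\ge0}\frac1{k!}\ad_S^kH$ are Cauchy and the series converges in $\cH_{r,s}$ to an analytic function whose Taylor series is the formal object $\es H$. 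It equals $H\circ\Psi^1_S$: the latter is analytic on $B_r(\th_s)$ because, by the previous step, $\Psi^1_S$ maps $B_r(\th_s)$ into the domain $B_{r+\rho}(\th_s)$ of $H$, and the identity $\tfrac{d}{dt}(H\circ\Psi^t_S)=\pm\ad_S(H)\circ\Psi^t_S$ shows that the two have the same Taylor expansion at the origin; hence $H\circ\Psi^1_S=\es H\in\cH_{r,s}$, in agreement with the formal calculus developed earlier. Summing geometric series of ratio $\mu\le\tfrac12$ gives at once $\norm{\es H}_{r,s}\le(1-\mu)^{-1}\norm{H}_{r+\rho,s}\le2\norm{H}_{r+\rho,s}$, the bound $\norm{(\es-\id)H}_{r,s}\le\tfrac{\mu}{1-\mu}\norm{H}_{r+\rho,s}\le2\mu\,\norm{H}_{r+\rho,s}=\delta^{-1}\norm{S}_{r+\rho,s}\norm{H}_{r+\rho,s}$, and $\norm{(\es-\id-\ad_S)H}_{r,s}\le\tfrac{\mu^2}{1-\mu}\norm{H}_{r+\rho,s}\le\tfrac12\delta^{-2}\norm{S}_{r+\rho,s}^2\norm{H}_{r+\rho,s}$ --- that is, \eqref{tizio}, \eqref{caio}, \eqref{sempronio} --- while \eqref{brubeck} follows the same way from $\norm{\sum_{k\ge h}c_k\ad_S^kH}_{r,s}\le\sum_{k\ge h}\tfrac1{k!}\norm{\ad_S^kH}_{r,s}\le\tfrac{\mu^h}{1-\mu}\norm{H}_{r+\rho,s}\le2\norm{H}_{r+\rho,s}(\norm{S}_{r+\rho,s}/2\delta)^h$.

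\textbf{Main obstacle.} The delicate step is the majorant analysis of the Poisson bracket: making the domination displayed above precise and bounding $X_{\underline{\{F,G\}}}$ by products of a second derivative of one majorant and a first derivative of the other, which is where the finiteness statement \rl{finmo} and the sign structure of \eqref{well} are genuinely used, and is exactly the point at which \cite{BMP18} imposes mass conservation. A secondary subtlety is the identification, in the last step, of the analytically convergent Lie series with the true composition $H\circ\Psi^1_S$ and with the formal object $\es H$; the telescoping-radii computation itself is routine.
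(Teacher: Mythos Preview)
Your proposal is correct and matches the paper's proof almost line for line: the paper also cites/uses \eqref{commXHK} (referring to \cite{BBiP1} rather than proving it), invokes the abstract flow Lemma~\ref{palis} for \eqref{pollon}, and then derives \eqref{brubeck} by exactly the telescoping you describe---slicing $[r,r+\rho]$ into $k$ pieces, iterating \eqref{commXHK}, using $1+kr_i/\rho\le k(1+r/\rho)$ and $k^k\le e^kk!$, and recognizing $4e(1+r/\rho)=1/(2\delta)$; estimates \eqref{tizio}--\eqref{sempronio} are then read off from \eqref{brubeck} with $h=0,1,2$. The only difference is that you supply a majorant/Cauchy-estimate sketch of \eqref{commXHK} (with constant $2$) where the paper simply quotes it, and you are more explicit about the identification $\es H=H\circ\Psi^1_S$.
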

\begin{lemma}\label{estim-cohom}
	Fix $s\geq 0$ and $\s>0$ and $\omega\in \dg$.	For any $R\in\cH_{r,s}^{d}$ with $d\ge 1$ and such that $\Pi_{\cK} R=0$,  the Homological equation $L_\omega S = R$ has a unique solution $S=L_\omega^{-1}R\in\cH_{r,s+\s}^d$ such that $\Pi_{\cK} S=0$ and moreover
	\begin{equation}\label{cavolfiore gevrey}
	\norm{L_\omega ^{-1} R}_{r,s+\s}\le \g^{-1} e^{\Cuno\s^{-\frac{3}{\theta}}}\norm{R}_{r,s}	
	\end{equation}
\end{lemma}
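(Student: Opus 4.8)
The plan is to diagonalize the homological operator $L_\omega=\ad_{D_\omega}$ on $\cR^d$, read off the small divisors, control them by the Diophantine condition, and then absorb the resulting loss into the shift of the Gevrey index $s\to s+\sigma$. Recall from Lemma~\ref{uff} that $L_\omega$ is diagonal on monomials, $L_\omega(u^\alpha\bar u^\beta)=\im\,(\omega\cdot(\beta-\alpha))\,u^\alpha\bar u^\beta$ (up to an overall sign, immaterial here). Since $R\in\cH_{r,s}^d$ has $\Pi_{\cK}R=0$, only monomials with $\alpha\neq\beta$ occur, and for each of them $\ell:=\beta-\alpha\in\Z^\Z_f\setminus\{0\}$ satisfies $\sum_n n\ell_n=0$ by momentum conservation \eqref{mconserv}. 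Setting
\[
S:=\sum_{\alpha\neq\beta}\frac{R_{\alpha,\beta}}{\im\,\omega\cdot(\beta-\alpha)}\,u^\alpha\bar u^\beta
\]
one gets $L_\omega S=R$ and $\Pi_{\cK}S=0$; the reality condition \eqref{real} and momentum conservation pass to $S$, and $L_\omega$ preserves the scaling degree, so $S\in\cR^d$. Any other solution differs from $S$ by an element of $\ker L_\omega\subseteq\cK$, so $S$ is the unique solution with vanishing $\cK$-component, i.e., $S=L_\omega^{-1}R$. That $S$ actually lies in $\cH_{r,s+\sigma}^d$ is the content of the estimate below.

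Since $\omega\in\dg$, the Diophantine bound \eqref{diofantinoBIS} gives, for every $\ell$ as above,
\[
\frac{1}{|\omega\cdot(\beta-\alpha)|}<\frac1\gamma\prod_{n\in\Z}\bigl(1+|\beta_n-\alpha_n|^{2}\jap{n}^{2}\bigr)\,,
\]
so, comparing Taylor coefficients, $\und S\preceq\gamma^{-1}\und{\widetilde R}$ with $\widetilde R:=\sum_{\alpha,\beta}\bigl(\prod_n(1+|\beta_n-\alpha_n|^{2}\jap{n}^{2})\bigr)|R_{\alpha,\beta}|\,u^\alpha\bar u^\beta$. The majorant norm $\norm{\cdot}_{r,s+\sigma}$ being monotone with respect to the coefficientwise order, it suffices to prove $\norm{\widetilde R}_{r,s+\sigma}\le e^{\Cuno\sigma^{-3/\theta}}\norm{R}_{r,s}$. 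I would derive this from the modewise statement, proved in the appendix along the lines of \cite{BMP18}, that raising the Gevrey index from $s$ to $s+\sigma$ compensates multiplying the coefficients by $\prod_n e^{\sigma\jap{n}^{\theta}(\alpha_n+\beta_n)}$, namely
\[
\norm{\sum_{\alpha,\beta}\bigl(\prod_n e^{\sigma\jap{n}^{\theta}(\alpha_n+\beta_n)}\bigr)H_{\alpha,\beta}\,u^\alpha\bar u^\beta}_{r,s+\sigma}\le\norm{H}_{r,s}\,.
\]

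It then remains to dominate the divisor loss $\prod_n(1+|\beta_n-\alpha_n|^{2}\jap{n}^{2})$ by $e^{\Cuno\sigma^{-3/\theta}}\prod_n e^{\sigma\jap{n}^{\theta}(\alpha_n+\beta_n)}$. Writing $\nu_n:=\alpha_n+\beta_n\ge|\beta_n-\alpha_n|$ and using $1+\nu^{2}\jap{n}^{2}\le4\nu^{2}\jap{n}^{2}$ for $\nu\ge1$ together with the tangent-line inequality $\log\nu\le a\nu+\log(1/a)-1$, one checks that for every $n$
\[
1+|\beta_n-\alpha_n|^{2}\jap{n}^{2}\le e^{c_n}\,e^{\sigma\jap{n}^{\theta}\nu_n}\,,\qquad c_n\ge0\,,
\]
where $c_n=0$ whenever $\nu_n=0$ and, crucially, $c_n=0$ once $\jap{n}\ge N_0$ with $N_0\sim(\sigma^{-1}\log\sigma^{-1})^{1/\theta}$ (because $\jap{n}^{\theta}$ eventually dominates the logarithmic loss), while $c_n\lesssim\log\sigma^{-1}$ for $\jap{n}<N_0$. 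Hence $\sum_n c_n\lesssim N_0\,\log\sigma^{-1}\le\Cuno\sigma^{-3/\theta}$ for $\sigma$ small, the exponent $3/\theta$ being comfortably enough to swallow the logarithmic factors (the complementary regime $\sigma$ bounded below is trivial). Therefore $\und{\widetilde R}\preceq e^{\Cuno\sigma^{-3/\theta}}\sum_{\alpha,\beta}\bigl(\prod_n e^{\sigma\jap{n}^{\theta}(\alpha_n+\beta_n)}\bigr)|R_{\alpha,\beta}|\,u^\alpha\bar u^\beta$, and chaining this with monotonicity of the majorant norm and the compensation estimate yields the Lemma.

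The main obstacle is this last step: pinning down the threshold $N_0$ and the bookkeeping so that $\sum_n c_n$ is genuinely bounded by $\Cuno\sigma^{-3/\theta}$ with the stated exponent, and making fully rigorous the compensation between coefficient reweighting and the index shift $s\to s+\sigma$ in the $M$-regular norm — which is where the majorant/tame machinery recalled from \cite{BMP18} does the work.
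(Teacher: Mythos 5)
Your setup (diagonalizing $L_\omega$ on monomials, uniqueness modulo $\ker L_\omega\subseteq\cK$, invoking \eqref{diofantinoBIS} coefficientwise, and monotonicity of the majorant norm) matches the paper's. But the pivot of your argument — the ``compensation estimate''
$\norm{\sum_{\alpha,\beta}\bigl(\prod_n e^{\sigma\jap{n}^{\theta}(\alpha_n+\beta_n)}\bigr)H_{\alpha,\beta}u^\alpha\bar u^\beta}_{r,s+\sigma}\le\norm{H}_{r,s}$ — is false for the norm of Definition~\ref{Hreg}. That norm is a sup over the majorant \emph{vector field}, and by Lemma~\ref{stantuffo} the relevant weight is $c^{(j)}_{r,s}(\alpha,\beta)$, which carries the factor $e^{-s(\sum_i\jap{i}^\theta(\alpha_i+\beta_i)-2\jap{j}^\theta)}$ with $j$ ranging over the support of $\alpha+\beta$. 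Hence the gain from $s\to s+\sigma$ is only $e^{-\sigma(\sum_i\jap{i}^\theta(\alpha_i+\beta_i)-2\jap{j}^\theta)}$, and taking $\jap{j}=\na_1$ (the largest entry of $\na(\alpha+\beta)$), Lemma~\ref{constance generalbis} shows the available decay is merely $e^{-\sigma(2-2^\theta)\sum_{l\ge3}\na_l^\theta}$: the two largest modes contribute essentially nothing. Your mode-by-mode absorption $1+|\beta_n-\alpha_n|^2\jap{n}^2\le e^{c_n}e^{\sigma\jap{n}^\theta\nu_n}$ therefore fails exactly at the largest mode $m_1$ of $\alpha-\beta$, which may coincide with $\na_1$ and whose loss $\sim|m_1|^2$ has no matching exponential gain to absorb it. This is not a bookkeeping issue about $N_0$; it is the central difficulty of the lemma.

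The missing idea is the dichotomy \eqref{divisor}--\eqref{fiandre}: if $|\sum_i(\alpha_i-\beta_i)i^2|\ge2\sum_i|\alpha_i-\beta_i|$ then $|\omega\cdot(\alpha-\beta)|\ge1$ and the bound is trivial by \eqref{stima1}; otherwise the divisor is genuinely small, and then momentum conservation together with the quadratic dispersion forces $|m_1|\le7\sum_{l\ge3}\na_l^2$ (Lemma~\ref{orbo}). This converts the total loss into $\sum_i|\alpha_i-\beta_i|\jap{i}^{\theta/2}\le C_*(\sum_i(\alpha_i+\beta_i)\jap{i}^\theta-2\jap{j}^\theta)$ (Lemma~\ref{constance 2 gen}), i.e.\ a quantity commensurate with the \emph{actually available} gain, after which the one-dimensional optimization of $f_i(x)=-\frac{\sigma}{C_*}x\jap{i}^{\theta/2}+\ln(1+x^2\jap{i}^2)$ (Lemma~\ref{pajata}) produces the $e^{\Cuno\sigma^{-3/\theta}}$. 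Your threshold/summation heuristics would be the right tool for that last step, but only after the reduction to the tail $\sum_{l\ge3}\na_l^\theta$, which your proposal skips.
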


\subsection{Poof of the main Theorem}

The theorem follows by the following holomorphic version of Lemma \ref{Birlinform}.
	If  $H_0\in \cF\cap \cH_{\tr,s_0}$ of the form $D_\omega+ P_0$ with $P_0\in \cF^{\ge 1}$ is formally linearizable.

Fix $0< r_0<\tr$ and $s_0>0$ so that
\[
\e_0:=\g^{-1}	\|P_0\|_{r_0,s_0} \le \g^{-1}\frac{r_0}{\tr}\|P_0\|_{\tr,s_0}
\]
is appropriately small.
More precisely, fix $\tC = 1+ \pi^2/6$ and  assume
\begin{equation}
\label{piccolo1}
\e_0^{-1} \ge  \tK\sup_n  e^{\cC_2(s_0) n^{\frac6\theta}} n^2 \max(e^{n-\chi^n}, e^{-(2-\chi)\chi^n}) .
\end{equation}
where $\tK$ is an appropriately large absolute constant while $\cC_2(s_0)= \cC_1 {\tC}^{\frac{3}{\theta}}{s_0}^{-\frac{3}{\theta}}$.
\\
Let
\[
r_i = r_{i-1}-  \rho_{i-1}\,,\quad s_{i} = s_{i-1} + \s_{i-1} \,,\quad  \td_i = 2^{i}\,, \rho_i =\frac{ r_0}{2\tC \jap{i}^2}\,,\quad \s_i = \frac{ s_0}{\tC \jap{i}^2}
\]
so that $r_i \to r_0/2$ and $s_i\to 2s_0$.

Fix $1<\chi<2$ such that\footnote{for example if $\chi= 15/14$  the sup on the left hand side is smaller than $-0,2$.}
\begin{equation}
\label{chi}
\sup_{n\ge 0}2^{n+1}\ln(1- \frac{1}{2\tC n^2} )+\chi^n(\chi-1) \le -0.1 \,
\end{equation}

\begin{lemma}\label{Birlinholo}

	 The sequence of generating functions and Hamiltonians of Lemma \ref{Birlinform}.
	\[
	\{D_\omega,S_i\} =\Pi^{\cR}  H_{i}\,,\quad H_{i}= e^{\{S_{i-1},\cdot\} }H_{i-1}\,.
	\]
	satisfies
	\[
	H_i = D_\omega + P_i \,,\quad P_i\in \cF^{\ge \td_i} \cap \cH_{r_i,s_i}\,.
	\]
	with the bounds
	\[
\|S_{i-1}\|_{r_{i-1},s_i} \le  \g^{-1}e^{\cC_1 \s_{i-1}^{-\frac3\theta}}\|P_{i-1}\|_{r_{i-1},s_{i-1}} \,,\quad	\| P_i\|_{r_i,s_i} \le \|P_0\|_{r_0,s_0} e^{-\chi^i} \,.
	\]
	Moreover each $S_{i-1}$ defines a symplextic analytic change of variables $\Psi_{i-1}: B_{r_{i}}(\th_s) \to B_{r_{i-1}}(\th_s)$ for all $s\ge s_i$ satisfying
	\begin{equation}
	\label{super}
	\sup_{|u|_s\leq r_i}|\Psi_i(u) -u|_s \le 2^{-i} r_0
	\end{equation}
	Finally setting
	\[
	\Phi_i= \Psi_1\circ\Psi_2\circ\dots \Psi_i
	\]
	we have that
	$\Phi_i\to \Phi_\infty$ where $\Phi_\infty$ is an invertible  symplectic map $B_{r_0/2}(\th_{2s_0}) \to B_{r_0}(\th_{2s_0})$
	such that
	\[
	H_0\circ \Phi_\infty = D_\omega
	\]
\end{lemma}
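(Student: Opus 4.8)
The plan is to run the formal iteration of Lemma~\ref{Birlinform} inside the analytic class, propagating along it the bound $\e_i:=\g^{-1}\|P_i\|_{r_i,s_i}\le\e_0 e^{-\chi^i}$ together with $P_i\in\cH_{r_i,s_i}\cap\cF^{\ge\td_i}$, $\td_i=2^i$. No new formal computation is required: Lemma~\ref{Birlinform} (via Corollary~\ref{keypoint}, i.e. the hypothesis that $H_0$ is formally linearizable) already yields $H_i=D_\omega+P_i$ with $P_i\in\cF^{\ge\td_i}$, the generator $S_i$ solving $\{D_\omega,S_i\}=\Pi^\cR P_i$, the relation $H_{i+1}=e^{\{S_i,\cdot\}}H_i$, and the vanishing of the Birkhoff correction $\Pi^{<\td_{i+1}}\Pi^\cK P_i$, so that formally
\[
P_{i+1}=\Pi^{\ge\td_{i+1}}\Pi^\cK P_i+\sum_{k\ge1}\tfrac{\ad_{S_i}^k}{k!}P_i-\sum_{k\ge1}\tfrac{\ad_{S_i}^k}{(k+1)!}\Pi^\cR P_i\ \in\ \cF^{\ge\td_{i+1}} .
\]
The content of the lemma is therefore only the propagation of analyticity and of the size bound, which I would do by induction on $i$.

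For the inductive step I would first solve the homological equation. Since $\Pi^\cR P_i\in\cH_{r_i,s_i}$ lies in $\cF^{\ge\td_i}$ with $\td_i\ge1$ and has $\Pi_\cK\Pi^\cR P_i=0$, Lemma~\ref{estim-cohom} (applied degree by degree, the small-divisor factor being uniform in the degree) gives $S_i=L_\omega^{-1}\Pi^\cR P_i\in\cH_{r_i,s_{i+1}}\cap\cF^{\ge\td_i}$ with $\|S_i\|_{r_i,s_{i+1}}\le\g^{-1}e^{\cC_1\s_i^{-3/\theta}}\|P_i\|_{r_i,s_i}=e^{\cC_2(s_0)\jap{i}^{6/\theta}}\,\e_i$, using $\s_i=s_0/(\tC\jap{i}^2)$ and $\cC_2(s_0)=\cC_1\tC^{3/\theta}s_0^{-3/\theta}$. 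The smallness hypothesis \eqref{piccolo1} keeps $\|S_i\|_{r_i,s_{i+1}}$ below the threshold $\delta_i=\rho_i/(8e\,r_i)$ of \eqref{stima generatrice}, so Lemma~\ref{ham flow} (applied with $r=r_{i+1}$, $\rho=\rho_i$) gives that $\Psi_i:=\Psi^1_{S_i}$ is analytic and symplectic from $B_{r_{i+1}}(\th_s)$ into $B_{r_i}(\th_s)$ for every $s\ge s_{i+1}$, that $H_{i+1}=H_i\circ\Psi_i$, and that $\sup_{|u|_s\le r_{i+1}}|\Psi_i(u)-u|_s\le r_i\|S_i\|_{r_i,s_{i+1}}\le 2^{-i}r_0$, the last inequality again via \eqref{piccolo1}; this is \eqref{super}.

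Then I would close the size bound. Estimating $\Pi^{\ge\td_{i+1}}\Pi^\cK P_i$ by Lemma~\ref{gasteropode} (it lies in $\cF^{\ge\td_{i+1}}$, so shrinking the radius from $r_i$ to $r_{i+1}$ costs $(r_{i+1}/r_i)^{\td_{i+1}}$) and the two operator series by \eqref{brubeck} with $h=1$ (each $\le 2\|P_i\|_{r_i,s_{i+1}}\,\|S_i\|_{r_i,s_{i+1}}/2\delta_i$), and matching the $s$-indices with Lemma~\ref{genoveffa}, one arrives at a Newton-type recursion
\[
\e_{i+1}\ \le\ \Big(\tfrac{r_{i+1}}{r_i}\Big)^{\td_{i+1}}\e_i\ +\ C\,\jap{i}^2\,e^{\cC_2(s_0)\jap{i}^{6/\theta}}\,\e_i^2 .
\]
Here $(r_{i+1}/r_i)^{\td_{i+1}}=\exp\!\big(2^{i+1}\ln(1-\rho_i/r_i)\big)\le\exp\!\big(2^{i+1}\ln(1-\tfrac1{2\tC\jap{i}^2})\big)\le e^{-0.1}e^{-(\chi-1)\chi^i}$ by the choice \eqref{chi} of $\chi$, which carries $\e_0 e^{-\chi^i}$ into $e^{-0.1}\e_0 e^{-\chi^{i+1}}$; and since $e^{-2\chi^i}=e^{-\chi^{i+1}}e^{-(2-\chi)\chi^i}$ with $\chi<2$, the bound \eqref{piccolo1} on $\e_0$ absorbs the quadratic term into the leftover $(1-e^{-0.1})\e_0 e^{-\chi^{i+1}}$. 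Hence $\e_{i+1}\le\e_0 e^{-\chi^{i+1}}$ and the induction closes.

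Finally, writing $\Phi_i$ for the composition of the first $i$ of the maps $\Psi_j$, one has $H_0\circ\Phi_i=H_i=D_\omega+P_i$. Since $r_i\downarrow r_0/2>0$ and $s_i\uparrow 2s_0<\infty$, every $\Psi_j$ is defined on $\th_{2s_0}$ and, by \eqref{super}, the total displacement $\sum_j 2^{-j}r_0$ is finite and small, so the standard argument for infinite compositions of close-to-identity symplectomorphisms shows that $\Phi_i$ converges uniformly on $B_{r_0/2}(\th_{2s_0})$ to an analytic symplectic map $\Phi_\infty:B_{r_0/2}(\th_{2s_0})\to B_{r_0}(\th_{2s_0})$, a diffeomorphism onto its image; letting $i\to\infty$ in $H_0\circ\Phi_i=D_\omega+P_i$ and using $\|P_i\|_{r_i,s_i}=\g\e_0 e^{-\chi^i}\to0$ yields $H_0\circ\Phi_\infty=D_\omega$. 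The main obstacle is the competition in the recursion between the Gevrey small-divisor loss $e^{\cC_2(s_0)\jap{i}^{6/\theta}}$ coming out of Lemma~\ref{estim-cohom} and the gain $(r_{i+1}/r_i)^{2^{i+1}}$ coming from the doubling $\td_i=2^i$ of the scaling degree: the degree gain is exponential in $i$ while the small-divisor loss is only sub-exponential, so the scheme super-converges, and the two free parameters $1<\chi<2$ and the smallness of $\e_0$ are tuned precisely (in \eqref{chi} and \eqref{piccolo1}) so that the balance holds uniformly in $i$ --- the strict inequality $\chi<2$ being what leaves room for the polynomial-in-$i$ overheads and for the finitely many initial steps; a secondary point is keeping the domains non-degenerate, which forces the summable choices $\rho_i,\s_i\sim\jap{i}^{-2}$ so that $r_i,s_i$ stay inside a fixed Gevrey scale.
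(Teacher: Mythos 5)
Your proposal is correct and follows essentially the same route as the paper's proof: the same induction propagating $\gamma^{-1}\|P_i\|_{r_i,s_i}\le \e_0 e^{-\chi^i}$, with Lemma \ref{estim-cohom} for the generator, Lemma \ref{ham flow} for the flow and the operator series, Lemma \ref{gasteropode} for the $\Pi^{\ge \td_{i+1}}\Pi^\cK P_i$ term, and conditions \eqref{chi} and \eqref{piccolo1} to close the recursion. Your explicit remark that Lemma \ref{estim-cohom} must be applied degree by degree with a degree-uniform constant is a detail the paper leaves implicit, and your closing discussion of the balance between the Gevrey loss and the degree-doubling gain accurately reflects the mechanism.
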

\begin{proof}
By induction. Let us denote $\g^{-1}\|P_0\|_{r_0,s_0}:= \e_0$. Fix $k\ge 0$ and assume that for all $i\le k$ the Lemma holds. 
By definition
\[
S_k = \ad_{D_\omega}^{-1}\Pi_\cR P_k\,.
\]
\\
For all $s\ge s_k+\s_k\equiv  s_{k+1}$, by Lemma \ref{estim-cohom} and \eqref{piccolo1}
\[
\|S_k\|_{r_k,s} \le \|S_k\|_{r_k,s_{k+1}} \le \g^{-1} e^{\cC_1 \s_{k}^{-\frac3\theta}}\|P_{k}\|_{r_{k},s_{k}}\le \e_0 e^{\cC_2(s_0) k^{\frac6\theta}} e^{-\chi^k}\le \frac{1}{16 e 2\tC k^2} \le \frac{\rho_k}{8 e r_k} 
\]

so, by Lemma \ref{ham flow} the time one flow $\Psi^1_{S_k}: B_{r_{k+1}}(\th_s)\to  B_{r_k}(\th_s)$ is well defined analytic, symplectic 
and, by \eqref{pollon} satisfies
\begin{equation}
\sup_{u\in  B_{r_{k+1}}(\th_{s})} 	\abs{\Phi^1_{S_k}(u)-u}_{\th_{s}}
\le
r_k \norm{S_k}_{r_k,s} \le C \e_0 r_0 k^{-2}e^{\cC_2(s_0) k^{\frac6\theta}} e^{-\chi^k}\stackrel{\eqref{piccolo1}}{\le}  2^{-k}r_0\,.\label{pollonk}
\end{equation}
Recalling that 
\begin{align*}
H_{k+1}:=e^{\{S_k,\cdot\} }H_{k} &=D_\omega + P_k + \{S_k, D_\omega\} + \sum_{h=2}^\infty \frac{\ad_{S_k}^{h-1}}{h!} \{S_k,D_\omega \}
+ \sum_{h=1}^\infty \frac{\ad_S^h}{h!} P_k \\
&= D_\omega+  \Pi^\cK P_k  - \sum_{h=1}^\infty \frac{\ad_{S_k}^{h}}{(h+1)!}\Pi^{\cR}  P_{k}  + \sum_{h=1}^\infty \frac{\ad_{S_k}^h}{h!} P_k\,\\
&=: D_\omega+ \Pi ^{<2^{k+1}} \Pi^\cK P_k + P_{k+1}\,.
\end{align*}
and that in Lemma \ref{Birlinform} we have proved that $\Pi ^{<2^{k+1}}\Pi^\cK P_k=0$, we get
\[
P_{k+1}= \Pi ^{\ge 2^{k+1}}\Pi^\cK P_k  - \sum_{h=1}^\infty \frac{\ad_{S_k}^{h}}{(h+1)!}\Pi^{\cR}  P_{k}  + \sum_{h=1}^\infty \frac{\ad_{S_k}^h}{h!} P_k
\] 
Now
\begin{align*}
\|\Pi ^{\ge 2^{k+1}}\Pi^\cK P_k\|_{r_{k+1},s_{k+1}} \le \pa{\frac{ r_{k+1}}{r_k}}^{\td_{k+1}}\|P_k\|_{r_k,s_k} &\le \e_0(1- \frac{1}{2\tC k^2})^{2^{k+1}} e^{-\chi^k}
\\
\|\sum_{h=1}^\infty \frac{\ad_{S_k}^{h}}{(h+1)!}\Pi^{\cR}  P_{k}  + \sum_{h=1}^\infty \frac{\ad_{S_k}^h}{h!} P_k\|_{r_{k+1},s_{k+1}}
&\le \frac{16 e r_k}{ \rho_k} \|P_k\|_{r_k,s_k} \|S_k\|_{r_{k+1},s_{k+1}} \\ & \le C \e^2_0 e^{\cC_2(s_0) k^{\frac6\theta}} e^{-2\chi^k} k^2
\end{align*}
The bound on $P_{k+1}$ follows from \eqref{piccolo1} and \eqref{chi} which imply 
\[
\e_0(1- \frac{1}{2\tC k^2})^{2^{k+1}} e^{-\chi^k} + C \e^2_0 e^{\cC_2(s_0) k^{\frac6\theta}} e^{-2\chi^k} k^2 \le \e_0 e^{-\chi^{k+1}}
\]
In order to prove the convergence we remark that all the $\Psi_i$ map $ B_{r_{i}}(\th_{2s_0}) \to B_{r_{i-1}}(\th_{2s_0})$, consequently $\Phi_i$ maps 
$ B_{r_{i}}(\th_{2s_0}) \to B_{r_0}(\th_{2s_0})$ and, by \eqref{super}, it is a Cauchy sequence.
\end{proof}

\appendix
\section{Technical Lemmata}
In the following, we adapt material from \cite{BMP18} to non mass conservation situation.
\subsection{Proof of Lemmata \ref{gasteropode} and \ref{genoveffa}}
We follow here \cite{BMP18}[Appendix B. Proof of lemma 3.1]. For any $H\in \cH_{r,s}$ (we recall that this space depends on two extra parameters $p\geq \frac{1}{2}$ and $0<\theta\leq 1$)  we define  a map
\[
B_1(\ell^2)\to \ell^2 \,,\quad y=\pa{y_j}_{j\in \Z }\mapsto 
\pa{Y^{(j)}_{H}(y;r,s)}_{j\in \Z}
\]
by setting
\begin{equation}\label{giggina}
Y^{(j)}_{H}(y;r,s) := \sum_\ast |H_{\bal,\bbt}| \frac{(\bal_j+\bbt_j)}{2}c^{(j)}_{r,s}(\bal,\bbt) y^{\bal+\bbt-e_j}
\end{equation}
where  $e_j$ is the $j$-th basis vector in $\N^\Z$, while the coefficient
\begin{equation}
\label{persico}
c^{(j)}_{r,s}(\al,\bt)=  r^{|\al|+|\bt|-2} \pa{\frac{\jap{j}^2}{\prod_i\jap{i}^{\bal_i+\bbt_i}}}^{p} e^{-s (\sum_i \jap{i}^\theta (\bal_i+\bbt_i) -2\jap{j}^\theta)}
\end{equation}

For brevity, we set
$$
\sum_\ast:=\sum_{\bal,\bbt: \;\pi(\al,\bt)=0}\,.
$$

The vector field $Y_H$ is a majorant  analytic function on $\ell^2$ which has the {\it same norm as $H$}. Since the majorant  analytic functions on a given space have a natural ordering this gives us a natural criterion for immersions, as formalized in the following Lemma.
\begin{lemma}\label{stantuffo}	 Let
$\ri,\rs>0,\,s,s'\geq 0.$ The following properties hold.
\begin{enumerate}
	\item   The norm of $H$ can be expressed as
	\begin{equation}\label{ypsilon}
	\norm{H}_{r,s}= 
	\sup_{|y|_{\ell^2}\le 1}\abs{Y_H(y;r,s)}_{\ell^2}
	\end{equation}
	\item  Given 
	$
	H^{(1)}\in \cH_{\rs,s'}$ 
	and $H^{(2)}\in \cH_{\ri,s}\,,
	$
	\\	
	such that for all $\bal,\bbt\in \N^\Z_f$ and  $j\in \Z$ with $\bal_j+\bbt_j\neq 0$ 
	one has
	\[
	|H^{(1)}_{\bal,\bbt}| c^{(j)}_{\rs,s'}(\bal,\bbt)  
	\le 
	c
	|H^{(2)}_{\bal,\bbt}| c^{(j)}_{\ri,s}(\bal,\bbt),
	\]
	for some $c>0,$
	then
	\[
	\norm{H^{(1)}}_{\rs,s'}
	\le 
	c
	\norm{H^{(2)}}_{\ri,s}\,.
	\]
\end{enumerate}
\end{lemma}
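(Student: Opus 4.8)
The plan is to deduce both items from a single change of variables, combined with the elementary fact that a power series with nonnegative coefficients attains its supremum on a ball at the ``positive'' corner.

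First I would unwind the definition of the norm, $\norm{H}_{r,s}=r^{-1}\sup_{|u|_{\th_s}\le r}|X_{\und H}(u)|_{\th_s}$, and recall that (up to the irrelevant unimodular constant) the $j$-th component of the Hamiltonian vector field is $(X_{\und H}(u))_j=\partial_{\bar u_j}\und H(u)=\sum_\ast|H_{\bal,\bbt}|\,\bbt_j\,u^{\bal}\bar u^{\bbt-e_j}$, so that $|(X_{\und H}(u))_j|\le\sum_\ast|H_{\bal,\bbt}|\,\bbt_j\,|u|^{\bal+\bbt-e_j}$ with equality when all $u_i\ge0$, while the $\th_s$-norm of $u$ is unchanged under $u_i\mapsto|u_i|$. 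Hence the supremum defining $\norm{H}_{r,s}$ may be restricted to the positive cone $\{u_i\ge0\}$; on it, using the reality condition $|H_{\bal,\bbt}|=|H_{\bbt,\bal}|$ to symmetrise the sum under $\bal\leftrightarrow\bbt$, the $j$-th component becomes exactly $\sum_\ast|H_{\bal,\bbt}|\frac{\bal_j+\bbt_j}{2}u^{\bal+\bbt-e_j}$, the expression appearing in \eqref{giggina}.

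Next, on the positive cone I would perform the substitution $y_i:=\jap{i}^{p}e^{s\jap{i}^\theta}u_i/r$, a bijection of $\{u_i\ge0:\ |u|_{\th_s}\le r\}$ onto $\{y_i\ge0:\ |y|_{\ell^2}\le 1\}$ with $|y|_{\ell^2}=|u|_{\th_s}/r$. A one-line bookkeeping of the powers of $r$, of $\jap{i}$ and of $e^{s\jap{i}^\theta}$ — which is precisely what the definition \eqref{persico} of $c^{(j)}_{r,s}$ encodes — gives $c^{(j)}_{r,s}(\bal,\bbt)\,y^{\bal+\bbt-e_j}=r^{-1}\jap{j}^{p}e^{s\jap{j}^\theta}\,u^{\bal+\bbt-e_j}$, hence
\[
Y^{(j)}_H(y;r,s)=r^{-1}\jap{j}^{p}e^{s\jap{j}^\theta}\,\big|(X_{\und H}(u))_j\big|\,,\qquad\text{so that}\qquad |Y_H(y;r,s)|_{\ell^2}=r^{-1}\,|X_{\und H}(u)|_{\th_s}\,.
\]
Taking the supremum over the positive cone, and noting that $Y_H(\cdot;r,s)$ itself has nonnegative coefficients so that $\sup_{|y|_{\ell^2}\le1}|Y_H(y;r,s)|_{\ell^2}$ is attained for $y_i\ge0$, yields \eqref{ypsilon}. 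The assumption $H\in\cH_{r,s}$ — i.e. absolute convergence of $\und H$ on $B_r(\th_s)$ — is exactly what legitimises these termwise rearrangements and makes the series defining $Y_H$ converge on the closed unit ball.

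For item (2) I would argue by termwise comparison: the hypothesis $|H^{(1)}_{\bal,\bbt}|\,c^{(j)}_{\rs,s'}(\bal,\bbt)\le c\,|H^{(2)}_{\bal,\bbt}|\,c^{(j)}_{\ri,s}(\bal,\bbt)$, assumed exactly for the indices $(\bal,\bbt,j)$ with $\bal_j+\bbt_j\ne0$ that actually contribute to $Y^{(j)}$, implies $0\le Y^{(j)}_{H^{(1)}}(y;\rs,s')\le c\,Y^{(j)}_{H^{(2)}}(y;\ri,s)$ for every $j$ and every $y$ with $y_i\ge0$; passing to $\ell^2$-norms and then to the supremum over the positive part of the unit ball, item (1) gives $\norm{H^{(1)}}_{\rs,s'}\le c\,\norm{H^{(2)}}_{\ri,s}$ (finiteness of the right-hand side is part of the hypothesis, so in particular $H^{(1)}\in\cH_{\rs,s'}$). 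I do not expect a genuine obstacle here: the only steps requiring care are the two reductions of the supremum to the positive cone, and the remark that the combinatorial weight $\bbt_j$ produced by $\partial_{\bar u_j}$ may be replaced by the symmetric weight $\frac{\bal_j+\bbt_j}{2}$ thanks to the reality condition; everything else is the bookkeeping already built into \eqref{persico}.
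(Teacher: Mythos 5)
Your proof is correct and follows exactly the route the paper has in mind (and which it only sketches, deferring to \cite{BMP18}): restrict the supremum defining $\norm{H}_{r,s}$ to the positive cone using the nonnegativity of the majorant's coefficients, symmetrise the weight $\bbt_j$ into $\tfrac{\bal_j+\bbt_j}{2}$ via the reality condition, and observe that the rescaling $y_i=\jap{i}^{p}e^{s\jap{i}^\theta}u_i/r$ turns the weighted vector field into $Y_H$ with the coefficients $c^{(j)}_{r,s}$ of \eqref{persico}, after which item (2) is termwise monotonicity. No gaps.
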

\begin{proof}[Proof of Lemma \ref{gasteropode}]
	Recalling \eqref{persico}, we have
	\[
	\frac{c^{(j)}_{\rs,s}(\bal,\bbt)}{c^{(j)}_{r,s}(\bal,\bbt)}= \pa{\frac{\rs}{r}}^{|\bal|+|\bbt|-2}\,.
	\]
	Since $|\bal|+|\bbt|-2\ge \td$, the inequality
	follows by Lemma \ref{stantuffo} with $H^{(1)}=H^{(2)}$ and $s=s'$.
\end{proof}
In order to prove Lemma \ref{genoveffa} we need some notations and results proven in \cite{Bourgain:2005} and \cite{Yuan_et_al:2017}.
\begin{defn}\label{n star}
	Given a vector $v=\pa{v_i}_{i\in \Z}\in \N^\Z_f$ with $|v|\ge 2$  we denote by $\na=\na(v)$ the vector $\pa{\na_l}_{l\in I}$ (where $I\subset \N$ is finite)  which is the decreasing rearrangement 
	of
	$$
	\{\N\ni h> 1\;\; \mbox{ repeated}\; v_h + v_{-h}\; \mbox{times} \} \cup \set{ 1\;\; \mbox{ repeated}\; v_1 + v_{-1} + v_0\; \mbox{times}  }
	$$
\end{defn}
\begin{rmk}
	A good way of envisioning this list is as follows. Given  an infinite set of variables $\pa{x_i}_{i\in\Z}$ and a vector $v=\pa{v_i}_{i\in \Z}\in \N^\Z_f$ consider the monomial $x^v:= \prod_i x_i^{v_i}$. We can write 
	\[
	x^v= \prod_i x_i^{v_i} = x_{j_1} x_{j_2}\cdots x_{j_{|v|}}\,,\quad \mbox{ with}\quad j_k\in \Z
	\] 
	then $\na(v)$ is the decreasing rearrangement of the list $\pa{\jap{j_1},\dots,\jap{j_{|v|}}}$.
	%
\end{rmk}
\begin{ex}
Let us set 
		$$
		v_{-1}=2, v_{0}=3, v_1= 1, v_{3}=1, v_{4}= 2.
		$$
		Hence, $1$ is repeated 6 times, $3$ is repeated 1 time, and $4$ is repeated 2 times~:
		$$
		\hat n_1=4,\hat n_2= 4, \hat n_3 = 3, \hat n_4=\dots=\hat n_9=1
		$$
\end{ex}

Given $\bal,\bbt\in\N^\Z_f$ 
with $ |\bal|+|\bbt|\ge 2$
from now on we define
$$
\na=\na(\bal+\bbt)\,
\qquad \mbox{and set}\quad
N:=|\bal|+|\bbt| 
$$
which is the cardinality of $\na.$ 
We  observe that, $N\ge 2$ and since
\begin{equation}
\label{moment}
0= \sum_{i\in \Z} i\pa{\bal_i - \bbt_i}= \sum_{h> 0} h \pa{\bal_h - \bbt_h - \bal_{-h} + \bbt_{-h}} \,,
\end{equation}
there exists a choice of $\s_i = \pm1, 0$ such that\footnote{A given $h>1$ appears  ${\bal_h + \bbt_h + \bal_{-h} + \bbt_{-h}}$ times in the list $\na$. Thus in order to get the summand $ h \pa{\bal_h - \bbt_h - \bal_{-h} + \bbt_{-h}}$ we assign to the $\na_l$ with $\na_l=h$ the sign $\s_l=+ $, $\al_h+\bt_{-h}$ times and the sign  $\s_l=- $, $\al_{-h}+\bt_{h}$ times. Let us now consider the case $h=1$. By construction, $1$ appears ${\baluno + \bbtuno + \bal_{-1} + \bbt_{-1}+ \al_0 +\bt_0}$ times in $\na$. Thus in order to obtain the summand $ \pa{\baluno - \bbtuno - \bal_{-1} + \bbt_{-1}}$ we assign to the $\na_l$ with $\na_l=1$ the sign $\s_l=+ $, $\al_1+\bt_{-1}$ times, the sign  $\s_l=- $, $\al_{-1}+\bt_{1}$ times and $\s_l=0$ the remaining $\al_0 +\bt_0$ times. }
\begin{equation}\label{pi e cappucci}
\sum_l \sigma_l\na_l=0.
\end{equation}
with $\sigma_l \neq 0$  if $\na_l \neq 1$.
Hence, 
\begin{equation}\label{eleganza}
\na_1\le\sum_{l\ge 2}\na_l.
\end{equation}
Indeed, if $\sigma_1 = \pm 1$, the inequality follows directly from \eqref{pi e cappucci}; if $\sigma_1 = 0$, then $\na_1=1$ and consequently $\na_l = 1\, \forall l$. Since
$|\al|+|\bt|\ge 2$, the list $\na$ has at least two elements, so the inequality is achieved.
\begin{lemma}\label{constance generalbis}
	Given $\bal,\bbt$ such that $\sum_i i (\bal_i-\bbt_i)=0$, and $|\al|+|\bt|\ge 2$, we have that setting $\na=\na(\bal+\bbt)$
	\begin{equation}\label{yuan 2bis}
	\sum_i \jap{i}^\theta(\bal_i+\bbt_i) =\sum_{l\ge 1} \na_l^\theta  \ge 2 \na^\theta_1+ (2-2^\teta) {\sum_{l\ge 3} \na_l^\theta} .
	\end{equation}
\end{lemma}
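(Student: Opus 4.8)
The plan is to deduce \eqref{yuan 2bis} from the single combinatorial constraint \eqref{eleganza}, the monotonicity $\na_1\ge\na_2\ge\cdots\ge1$ of the rearranged list $\na=\na(\bal+\bbt)$, and the concavity of $t\mapsto t^\theta$ for $0<\theta\le 1$. First I would record the identity
\begin{equation}\label{rearr}
\sum_{i}\jap{i}^\theta(\bal_i+\bbt_i)=\sum_{l\ge1}\na_l^\theta,
\end{equation}
which is immediate from Definition \ref{n star} upon grouping the monomial exponents according to the value of $\jap{\cdot}$, together with $|\na|=|\bal|+|\bbt|=:N\ge2$. Writing $\sum_{l\ge1}\na_l^\theta=\na_1^\theta+\na_2^\theta+\sum_{l\ge3}\na_l^\theta$, the asserted inequality \eqref{yuan 2bis} becomes equivalent to
\begin{equation}\label{reduced}
\na_1^\theta-\na_2^\theta\le(2^\theta-1)\sum_{l\ge3}\na_l^\theta\,;
\end{equation}
when $N=2$ the right-hand side is an empty sum while \eqref{eleganza} forces $\na_1=\na_2$, so \eqref{reduced} holds trivially in that case.

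The core of the argument is the following elementary chain estimate, which I would isolate as an auxiliary step: if $a\ge b\ge c_1\ge c_2\ge\cdots\ge c_m>0$ and $a\le b+c_1+\cdots+c_m$, then
\begin{equation}\label{chain}
a^\theta-b^\theta\le(2^\theta-1)\sum_{i=1}^{m}c_i^\theta.
\end{equation}
To prove \eqref{chain}, set $b_0:=b$ and $b_j:=\min(b+c_1+\cdots+c_j,\,a)$ for $1\le j\le m$. Then $b_0=b$, $b_m=a$ (here the hypothesis $a\le b+\sum c_i$ is used), the sequence $(b_j)$ is non-decreasing, and one checks directly that each increment satisfies $\delta_j:=b_j-b_{j-1}\in[0,c_j]$ while $b_{j-1}\ge b\ge c_j$. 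Now one uses concavity of $t\mapsto t^\theta$ twice: since $\delta\mapsto(b_{j-1}+\delta)^\theta-b_{j-1}^\theta$ is non-decreasing and $\delta_j\le c_j$,
\[
b_j^\theta-b_{j-1}^\theta\le(b_{j-1}+c_j)^\theta-b_{j-1}^\theta\,,
\]
and since $t\mapsto(t+c_j)^\theta-t^\theta$ is non-increasing on $t>0$ and $b_{j-1}\ge c_j$, the right-hand side is $\le(2c_j)^\theta-c_j^\theta=(2^\theta-1)c_j^\theta$. Summing over $j=1,\dots,m$ telescopes the left-hand side to $a^\theta-b^\theta$ and yields \eqref{chain}.

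Finally I would apply \eqref{chain} with $a=\na_1$, $b=\na_2$, and $c_j=\na_{j+2}$ for $j=1,\dots,N-2$: the monotonicity of $\na$ together with $\na_l\ge1$ gives the ordering hypotheses, and the condition $a\le b+\sum_j c_j$ is precisely \eqref{eleganza}. This produces \eqref{reduced}, hence \eqref{yuan 2bis} via \eqref{rearr}. I expect the only point requiring a little care to be the bookkeeping inside \eqref{chain}, namely verifying $\delta_j\le c_j$ and $b_m=a$ by distinguishing whether the partial sums $b+c_1+\cdots+c_j$ have already reached $a$; everything else is routine. As a sanity check, for $\theta=1$ one has $2^\theta-1=1$ and \eqref{reduced} is merely a rearrangement of \eqref{eleganza}.
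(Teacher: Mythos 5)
Your proof is correct, but it reaches the key inequality by a genuinely different mechanism than the paper. Both arguments reduce \eqref{yuan 2bis} to showing $\na_1^\theta-\na_2^\theta\le(2^\theta-1)\sum_{l\ge 3}\na_l^\theta$ and both feed in the same combinatorial input, namely the momentum bound \eqref{eleganza} $\na_1\le\sum_{l\ge2}\na_l$ together with concavity of $t\mapsto t^\theta$; they diverge after that. The paper first replaces $\na_1^\theta$ by the cruder bound $\bigl(\sum_{l\ge2}\na_l\bigr)^\theta$ and then proves the universal inequality $x_2^\theta+(2^\theta-1)\sum_{l\ge3}x_l^\theta\ge\bigl(x_2+\sum_{l\ge3}x_l\bigr)^\theta$ by a multivariable minimization: successive sign checks on partial derivatives push the minimum to the diagonal, where it reduces to $1+(2^\theta-1)k-(k+1)^\theta\ge0$. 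You instead keep $\na_1$ itself and interpolate from $\na_2$ up to $\na_1$ in increments of at most $c_j=\na_{j+2}$, using twice the elementary concavity facts that $(t+c)^\theta-t^\theta$ is monotone in each argument, so each step costs at most $(2c_j)^\theta-c_j^\theta=(2^\theta-1)c_j^\theta$, and the sum telescopes. Your route is more elementary (no calculus of several variables), isolates a clean reusable chain lemma, and the truncation $b_j=\min(b+c_1+\cdots+c_j,a)$ correctly handles the case where the partial sums overshoot $\na_1$; the bookkeeping there ($\delta_j\le c_j$, $b_{j-1}\ge c_j$, $b_m=a$) is exactly right. The paper's approach, while heavier, proves a slightly more general pointwise inequality independent of the constraint \eqref{eleganza}, which it only invokes at the very first step. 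Both handle the degenerate case $|\bal|+|\bbt|=2$ identically, via $\na_1=\na_2$.
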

\begin{proof}
The lemma above was proved in  \cite{Bourgain:2005} for $\theta=\frac12$  and for general {$0<\theta<1$} in \cite{Yuan_et_al:2017}[Lemma 2.1], in the case of zero mass and momentum. For completeness we give below a  proof , using only momentum conservation.
\\
We start by noticing that if $|\al|+|\bt|= 2$ then  $\na$ has cardinality equal to two and \eqref{yuan 2bis} becomes $\na_1+\na_2 \ge 2\na_1$.  Now,  by \eqref{eleganza}, momentum conservation implies that
$\na_1=\na_2$ and hence \eqref{yuan 2bis}.
\\
 If  $|\al|+|\bt|\ge 3$ we write
\[
\sum_i \jap{i}^\theta(\bal_i+\bbt_i) -2\na_1^\theta=\sum_{l\ge 2} \na_l^\theta  - \na_1^\theta \ge  \sum_{l\ge 2} \na_l^\theta  - (\sum_{l\ge 2} \na_l)^\theta
\]
since the cardinality of $\na$ is at least three we may write
\[
\sum_{l\ge 2} \na_l^\theta  - (\sum_{l\ge 2} \na_l)^\theta = \na_2^\theta + \sum_{l\ge 3} \na_l^\theta - (\na_2+\sum_{l\ge 3} \na_l)^\theta 
\]
Now setting, {for $x_i\geq 1$, $i=2,\ldots, N$,}
\[
f(x_2,\dots,x_N):= x_2^\theta +(2^\theta - 1) \sum_{l\ge 3} x_l^\theta - (x_2+\sum_{l\ge 3} x_l)^\theta. 
\]
{Hence, we have $\partial_{x_2} f\ge 0$  for  $x_2\ge x_3\ge 1$.} 
Then 
$$
f(x_2,\dots,x_N)\ge  f(x_3,x_3,x_4,\dots,x_N)=: f_3(x_3,\dots,x_N)\,.
$$ 
Now we set \[f_n(x_n,\dots ,x_N):= f(\underbrace{x_n,\dots,x_n}_{n-1},x_{n+1},\dots,x_N)= (1+ (2^\theta -1)(n-2))x_n^\theta  +\sum_{\ell\ge  n+1}x_\ell- ((n-1)x_n+ \sum_{\ell\ge  n+1}x_\ell)^\theta
\]
so that $f(x_2,\dots,x_N)\ge  f_3(x_3,\dots,x_N)$. Assume inductively that for some $3\le n<N$, one has $f(x_2,\dots,x_N)\ge f_3(x_3,\dots,x_N) \ge \dots \ge f_{n}(x_n,\dots ,x_N)$. By direct computation \footnote{recalling  that  the $x_\ell>0$ and that  $ 1+(2^\theta-1)k - (k+1)^\theta \ge 0$, with $k= n+2>1$}
\begin{align*}
\partial_{x_n} f_n&=\theta\Big[ \frac{(1+ (2^\theta -1)(n-2))}{x_n^{1-\theta }} -\frac{n-1}{((n-1)x_n+ \sum_{\ell\ge  n+1}x_\ell)^{1-\theta} }\Big]\\
&\ge \theta x_n^{\theta-1 }\Big[ {(1+ (2^\theta -1)(n+2))} -{(n-1)^{\theta} }\Big]\ge 0\,,
\end{align*}
		so that the minimum is attained in $x_n= x_{n+1}$ and $f(x_2,\dots,x_N) \ge  f_{n+1}(x_{n+1},\dots ,x_N)$. In conclusion
\[
f(x_2,\dots,x_N)\ge f(x_N,\dots,x_N)\ge 0
\]
where the last inequality follows by recalling  that $ 1+(2^\theta-1)k - (k+1)^\theta \ge 0$
 for $k\geq 1$. 
\end{proof}
The Lemma proved above, is fundamental in discussing the properties of $\cH_{r}(\th_{p,s,a})$ with $s>0$, indeed it implies
\begin{equation}\label{stima1}
\sum_i \jap{i}^\theta (\bal_i+\bbt_i) -2\jap{j}^\theta\ge(2-2^\theta) \pa{\sum_{l\ge 3} \na_l^\theta } 
\ge  0
\end{equation}
for all $\bal,\bbt$ such that $\bal_j+\bbt_j\ne 0$. Indeed, this follows from the fact that $\jap{j} \leq \na_1$. 

\begin{proof}[Proof of Lemma \ref{genoveffa}]
	In all that follows we shall use systematically the fact that our Hamiltonians {are momentum preserving},  are zero at the origin and have no linear term so that $\abs{\bal} +\abs{\bbt} \ge 2$.
	\\
  We need to show that
	\begin{equation}\label{gigina}
	\frac{	c^{(j)}_{r,s+\s}(\bal,\bbt)}{c^{(j)}_{r,s }(\bal,\bbt)} = 	\exp(-\s (\sum_i \jap{i}^\theta (\bal_i+\bbt_i) -2\jap{j}^\theta) \le 1\,.
	\end{equation}
	The first identity comes form \eqref{persico}, while the last inequality  follows by \eqref{stima1} of Lemma \ref{constance generalbis} 
	\end{proof}
\subsection{Proof of Lemma \ref{ham flow}}
We recall the following classical result.
\begin{lemma}\label{palis}
	Let $0<r_1<r.$ Let $E$ be a Banach space endowed with the norm $|\cdot|_E$.
	Let $X:B_r \to E$ a vector field satisfying
	$$\sup_{B_r}|X|_E\leq \delta_0\,.$$
	Then the flow $\Phi(u,t)$ of the vector field\footnote{Namely the solution 
		of the equation $\partial_t \Phi(u,t)=X(\Phi(u,t))$ with initial datum
		$\Phi(u,0)=u.$} is well defined for every 
	$$|t|\leq T:=\frac{r-r_1}{\delta_0}$$
	and $u\in B_{r_1}$
	with estimate
	$$
	|\Phi(u,t)-u|_E\leq \delta_0 |t|\,,\qquad
	\forall\, |t|\leq T
	\,.
	$$
\end{lemma}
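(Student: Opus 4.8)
The plan is to deduce this from the classical Picard--Lindel\"of theorem, combined with an a priori estimate on the displacement and a continuity (bootstrap) argument keeping the orbit inside the domain $B_r$ of $X$. In the situations where this Lemma is invoked the vector field $X$ is real-analytic on $B_r$, hence locally Lipschitz, so for each $u\in B_{r_1}$ there is a unique maximal solution $t\mapsto\Phi(u,t)$ of $\partial_t\Phi(u,t)=X(\Phi(u,t))$, $\Phi(u,0)=u$, defined on a maximal open interval $(t_-(u),t_+(u))\ni0$ and taking values in $B_r$ throughout; this is the standard local existence statement and requires no new idea.

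The quantitative content I would extract as follows. As long as $\Phi(u,t)\in B_r$ one has $|\partial_t\Phi(u,t)|_E=|X(\Phi(u,t))|_E\le\delta_0$, so integrating the ODE gives
\[
|\Phi(u,t)-u|_E\le\int_0^{|t|}|X(\Phi(u,\tau))|_E\,d\tau\le\delta_0|t|,\qquad\text{hence}\qquad|\Phi(u,t)|_E\le r_1+\delta_0|t|.
\]
The one real step is then the bootstrap: with $T=(r-r_1)/\delta_0$ I would set $t^*$ to be the supremum of the times $\tau\in[0,t_+(u))\cap[0,T]$ for which $\Phi(u,s)\in B_r$ for all $s\in[0,\tau]$, note that the displacement bound forces $|\Phi(u,t)|_E<r$ strictly on $[0,t^*)$, and rule out $t^*<T$. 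Indeed the orbit cannot exit $B_r$ at $t^*$ --- by continuity it still lies in $B_r$ there, hence remains inside for slightly longer, contradicting the definition of $t^*$ --- and it cannot cease to exist at $t^*$ --- on $[0,t^*)$ it stays in a fixed closed ball of radius $<r$ on which $X$ is bounded, so $\Phi(u,\cdot)$ is uniformly continuous up to $t^*$ and the solution can be prolonged past it. Therefore $t^*=T$, the negative-time half being identical, and the displacement estimate holds for all $|t|\le T$.

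This is a classical lemma quoted for convenience, so there is no genuine obstacle; the only point deserving attention is this bootstrap, which is what upgrades the pointwise bound $|X|_E\le\delta_0$ into a statement valid on the whole interval $[-T,T]$, namely that the flow neither leaves $B_r$ nor blows up before time $T$.
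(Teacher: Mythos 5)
Your proof is correct. The paper itself offers no argument for this lemma — it is quoted as a classical result and immediately applied with $E=\th_s$, $X=X_S$ — so there is nothing to compare against; your combination of local Picard--Lindel\"of existence, the a priori displacement bound $|\Phi(u,t)-u|_E\le\delta_0|t|$, and the continuation/bootstrap showing the orbit neither exits $B_r$ nor ceases to exist before time $T$ is exactly the standard argument one would supply. You are also right to flag that the statement implicitly requires $X$ to be locally Lipschitz (for uniqueness and local existence, which can fail for merely continuous fields in infinite dimensions); in the paper's application $X_S$ is majorant-analytic on $B_{r+\rho}(\th_s)$, so this is harmless.
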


\begin{proof}[Proof of Lemma \ref{ham flow}]
	The estimate for the Poisson bracket is proven in \cite{BBiP1}. In order to prove the other estimates we use Lemma \ref{palis}, with $E\to \th_{s}$, $X\to X_S$,
	$\delta_0\to (r+\rho) |S|_{r+\rho},$ $r\to r+\rho,$ $r_1\to r,$	
	$T\to 8e.$  finally we do not write the dependence on $s$ which is fixed.
	
	Then the fact that the time $1$-Hamiltonian flow 
	$\Phi^1_S: B_r(\th_{s})
	\to B_{r + \rho}(\th_{s})$  is well defined, analytic, symplectic  follows,	 since 
	\[
	\sup_{u\in  B_{r+\rho}(\th_{s})}
	|X_S|_{\th_{s}}
	\le (r+\rho) |S|_{r+\rho}<\frac{\rho}{8 e}\,.
	\]
	Regarding the estimate \eqref{pollon}, again by 
	Lemma \ref{palis}  (choosing $t=1$), we get
	\[
	\sup_{u\in  B_{r}(\th_{s})}
	\abs{\Phi^1_S(u)-u} _{\th_{s}}
	\le
	(r+\rho) |S|_{r+\rho}
	<\frac{\rho}{8 e}
	\,.
	\]

	Estimates \eqref{tizio},\eqref{caio},\eqref{sempronio} directly follow by \eqref{brubeck} with $h=0,1,2,$
	respectively and $c_k=1/k!$, 
	recalling that by Lie series 
	$$
	H \circ \Phi^1_S = e^{\rm ad_S} H = \sum_{k=0}^\infty \frac { {\rm ad}_S^k H}{k!} =
	\sum_{k=0}^\infty \frac {  H^{(k)}}{k!}\,,
	$$
	where
	$ H^{(i)} := {\rm ad}_S^i (H)= {\rm ad}_S ( H^{(i-1)}) $,  $ H^{(0)}:=H $.\\
	Let us prove \eqref{brubeck}.
	Fix $k\in\N,$ $k>0$ and set
	$$
	r_i := r +\rho(1 - \frac{i}{k}) \,
	\, ,  \qquad  i = 0,\ldots,k \, .
	$$
	Note that, by the immersion properties of the norm in Lemma \ref{gasteropode},
	\begin{equation}\label{morello}
	\|S\|_{r_i}\leq \|S\|_{r+\rho}\,,\qquad
	\forall\,  i = 0,\ldots,k\,.
	\end{equation}
	Noting that
	\begin{equation}\label{dave}
	1+\frac{k r_i}{\rho} \,
	\leq
	k \pa{1+\frac{r}{\rho }}\,,
	\qquad \forall\, i=0,\ldots,k\,,
	\end{equation}
	by using $k$ times \eqref{commXHK}  we have
	\begin{eqnarray*}
		\| {H^{(k)}}\|_r
		&=& 
		\|   \{S, {H^{(k-1)}}\} \|_r
		\leq  
		4 (1+\frac{ k r}{\rho})
		\|{H^{(k-1)}}\|_{r_{k-1}}\|
		S\|_{r_{k-1}}
		\\
		&\stackrel{\eqref{morello}}\leq&
		\|H\|_{r+\rho}
		\|S\|_{r+\rho}^k
		4^k
		\prod_{i=1}^k
		(1+\frac{ k r_i}{\rho})
		\stackrel{\eqref{dave}}\leq
		\|H\|_{r+\rho}
		\left(
		4k \pa{1+\frac{r}{\rho }}\|S\|_{r+\rho}
		\right)^k
		\,.
	\end{eqnarray*} 
	Then, using $ k^k\leq e^k k!, $
	we  get
	\begin{eqnarray*}
		\left\|\sum_{k\geq h} c_k {H^{(k)}}\right\|_{r} &\leq&
		\sum_{k\geq h} |c_k| \|{H^{(k)}}\|_{r}
		\leq
		\|H\|_{r+\rho} \sum_{k\geq h} \left(
		4e \pa{1+\frac{r}{\rho }}\|S\|_{r+\rho}
		\right)^k
		\\
		&= & \| H\|_{r+\rho} \sum_{k\geq h} (\|S\|_{r+\rho}/2\delta)^k
		\stackrel{\eqref{stima generatrice}}\leq 2 \|H\|_{r+\rho} (\|S\|_{r+\rho}/2\delta)^h\,.
	\end{eqnarray*}	
	Finally, if $ S $ and $ H $ satisfy momentum conservation so does  each
	$ {\rm ad}_S^k H $, $ k \geq 1 $, hence $ H \circ \Phi^1_S $ too.
\end{proof}
\subsection{Proof of lemma \ref{estim-cohom}}
 Here we strongly use the fact that we are working with a dispersive PDE on the circle with superlinear dispersion law.
 
 	By Lemma \ref{stantuffo} (2), 
 we have
 \[
 \norm{L_\omega^{-1} R}_{r,s+\s}\le \g^{-1}K \norm{ R}_{r,s}
 \] where
 \begin{equation*}
 	K=\g\sup_{\substack{j: \bal_j+\bbt_j\neq 0\\  \pi(\al,\bt)=0}}
 	\frac{e^{-\s\pa{\sum_i\jap{i}^\theta (\bal_i+\bbt_i) -2\jap{j}^\theta}}}{\abs{\omega\cdot{\pa{\bal - \bbt}}}}
 	\,.
 \end{equation*}
 Therefore proving \eqref{cavolfiore gevrey} amounts  to showing that
 \begin{equation}
 	\label{pinne}
 	K \le e^{\cC_1 \s^{-\frac 3\theta}}\,.
 \end{equation}
We divide in  two cases  regarding whether the inequality 
\begin{equation}\label{divisor}
	\abs{\sum_i{\pa{\bal_i-\bbt_i}i^2}}\le 2 \sum_i\abs{\bal_i-\bbt_i}\,,
\end{equation}
holds or not.
We remark that 
\begin{equation}\label{fiandre}
	\abs{\sum_i{\pa{\bal_i-\bbt_i}i^2}}\ge 2 \sum_i\abs{\bal_i-\bbt_i}
	\qquad
	\Longrightarrow
	\qquad
	\abs{\omega\cdot\pa{\bal-\bbt}}\ge 1\,,
\end{equation}
indeed denoting $\omega_j = j^2 + \xi_j $ with $\abs{\xi_j}\le \frac{1}{2}$, \[\abs{\omega\cdot\pa{\bal-\bbt}} \ge 2\sum_j\abs{\bal_j - \bbt_j} - \frac{1}{2}\sum_j\abs{\bal_j - \bbt_j}\ge 1.\]
Of course if 
$\abs{\omega\cdot\pa{\bal-\bbt}}\ge 1$, by 
\eqref{stima1} and \eqref{gigina}
we get 
$$
\g
\frac{e^{-\s\pa{\sum_i\jap{i}^\theta (\bal_i+\bbt_i) -2\jap{j}^\theta}}}{\abs{\omega\cdot{\pa{\bal - \bbt}}}}
\leq 1\,
$$
and the bound \eqref{pinne} is trivially achieved.\\
Otherwise, to  deal with  the case in which \eqref{divisor} holds, we need some notation.
Given $u\in \Z^\Z_f$,  consider the set
\[
M(u):=\set{j\neq 0 \,,\quad \mbox{repeated}\quad  \abs{u_j} \;\mbox{times}}\,,
\]
where $D(u)<\infty$ is its cardinality.
Define the vector $m=m(u)$ as the reordering of the elements of the set above 
such that
$|m_1|\ge |m_2|\ge \dots\geq |m_D|\ge 1.$ 	
\\
Given $\bal\neq\bbt\in\N^\Z_f$ with $|\bal|+|\bbt|\ge 3$
we consider $m=m(\bal-\bbt)$ and $\na=\na(\bal+\bbt).$	
If we denote by $D$ the cardinality of $m$ and $N$ the one of $\na$ we have 
\begin{equation}\label{cappella}
D+\bal_0+\bbt_0\le N
\end{equation}
and
\begin{equation}\label{abbacchio}
(|m_1|,\dots,|m_D|,\underbrace{1,\;\dots \;,1}_{N-D\;\rm{times}} )\, \preceq\,
\pa{\na_1,\dots \na_N}\,.
\end{equation}
\begin{ex}
	Let set $v=\al+\beta$ and $u=\al-\beta$ with
		\begin{eqnarray*}
			\al_{-5}=1, \al_{-2}=2, \alpha_0= 2,\alpha_4=1 &&\\
			\beta_{-5}=1, \beta_{-3}=2, \beta_0=3,\beta_6=1 &&\\
			\pi(\al,\beta)=(-5)(1-1)+(-3)(-2)+(-2)(2)+4(1)+6(-1)=0&&\\
			v_{-5}=2,v_{-3}=2, v_{-2}=2, v_{0}=5, v_{4}=1, v_{6}=1&&\\
			u_{-5}=0,u_{-3}=-2, u_{-2}=2, u_{0}=-1, u_{4}=1, u_{6}=-1&&\\
			\na (v)=(6, 5,5,4,3,3,2,2,1,1,1,1,1), N= 13(=\text{Card}(\hat n))&&\\
			M(u)=\{-3, -3, -2, -2,4,6\}, m(u)=\{6,4, -3, -3, -2, -2\}, D(u)=6. 
		\end{eqnarray*}
		Therefore, we have $D(u)+\alpha_0+\beta_0=8\leq 13=N(\hat n(v))$. Hence, (\ref{cappella}) holds. 
		\\
		Futhermore,
		$(6,4, 3, 3, 2, 2, 1,1,1,1,1,1,1)\leq \hat n(v)$, that is  (\ref{abbacchio}).
\end{ex}	

\begin{lemma}\label{mizza}
	Assume that $g$ defined on $\Z$ is non negative,  even
	and not decreasing on $\N.$ 
	Then, if $\bal\neq\bbt$,
	\begin{equation}\label{pula}
	\sum_{i\in\Z} g(i) |\bal_i-\bbt_i|
	\leq
	2g(m_1)+
	\sum_{l\geq 3} g(\na_l)\,.
	\end{equation}
\end{lemma}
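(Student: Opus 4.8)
The plan is to rewrite the left-hand side of \eqref{pula} in terms of the rearranged list $m=m(\bal-\bbt)$ together with the single index $0$, and then to pack this data into the list $\na=\na(\bal+\bbt)$ using the two combinatorial facts \eqref{cappella} and \eqref{abbacchio} just established: the term $2g(m_1)$ will pay for the two largest entries of the rearrangement, while the slack in \eqref{cappella} will pay for the index-$0$ contribution. Concretely, set $N:=|\bal|+|\bbt|$ and let $D$ be the length of $m$ (assuming, as we may, that $m\neq\emptyset$; the case $\bal-\bbt\in\Z\be_0$ is treated directly). Since $g$ is even, $g(i)=g(|i|)$, and the list $(m_1,\dots,m_D)$ enumerates the multiset $\{\,i\neq 0\ \text{repeated}\ |\bal_i-\bbt_i|\ \text{times}\,\}$; hence
\[
\sum_{i\in\Z}g(i)\,|\bal_i-\bbt_i|\;=\;g(0)\,|\bal_0-\bbt_0|\;+\;\sum_{l=1}^{D}g(m_l)\,.
\]

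Next I would use that $g$ is non-decreasing on $\N$, so $g(0)\le g(1)$, and that \eqref{cappella} together with $|\bal_0-\bbt_0|\le\bal_0+\bbt_0$ gives $|\bal_0-\bbt_0|\le N-D$. Introduce the non-increasing $N$-tuple $\wt m:=(|m_1|,\dots,|m_D|,\underbrace{1,\dots,1}_{N-D})$ of \eqref{abbacchio}; entrywise $\wt m_l\le\na_l$, and since $g$ is even and non-decreasing on $\N$ this gives $g(\wt m_l)\le g(\na_l)$ for every $l$. Because $\wt m_l=1$ for $l>D$ and $D+|\bal_0-\bbt_0|\le N$, the index-$0$ term rewrites as $g(1)\,|\bal_0-\bbt_0|=\sum_{l=D+1}^{D+|\bal_0-\bbt_0|}g(\wt m_l)$, so that
\[
\sum_{i\in\Z}g(i)\,|\bal_i-\bbt_i|\;\le\;\sum_{l=1}^{D}g(|m_l|)+\sum_{l=D+1}^{D+|\bal_0-\bbt_0|}g(\wt m_l)\;=\;\sum_{l=1}^{D+|\bal_0-\bbt_0|}g(\wt m_l)\,.
\]
To finish, split off the two leading slots: as $\wt m$ is non-increasing, each of the (at most two) terms with $l\le 2$ is $\le g(\wt m_1)=g(|m_1|)=g(m_1)$, while for $l\ge 3$ one has $g(\wt m_l)\le g(\na_l)$; adjoining the remaining non-negative terms $g(\na_l)$ with $D+|\bal_0-\bbt_0|<l\le N$ then yields $\sum_{l=1}^{D+|\bal_0-\bbt_0|}g(\wt m_l)\le 2g(m_1)+\sum_{l\ge 3}g(\na_l)$, which is \eqref{pula}.

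The one point that needs care is precisely this final bookkeeping. One must resist bounding $g(|m_1|)+g(|m_2|)$ by $g(\na_1)+g(\na_2)$ — this is wasteful, since only $|m_1|\le\na_1$ is available from \eqref{abbacchio} and $|m_1|$ may be strictly smaller than $\na_2$ — and instead keep the two leading slots expressed through $|m_1|$; this is exactly why \eqref{pula} carries the factor $2$ in front of $g(m_1)$ rather than $g(\na_1)+g(\na_2)$. Beyond that, the argument is routine and rests entirely on the two inputs from the preceding page: \eqref{cappella}, which provides the room in $\na$ past position $D$ needed to absorb the index-$0$ contribution, and the entrywise domination \eqref{abbacchio}.
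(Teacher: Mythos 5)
Your proof is correct and is essentially the paper's own argument: both decompose the left-hand side via the rearrangement $m(\bal-\bbt)$ plus the index-$0$ term, bound $g(0)|\bal_0-\bbt_0|$ by $g(1)(\bal_0+\bbt_0)$, absorb the two leading slots into $2g(m_1)$ using monotonicity, and finish with \eqref{cappella} and \eqref{abbacchio}. The only difference is that you spell out the final bookkeeping (which the paper leaves as ``follows by \eqref{cappella} and \eqref{abbacchio}''), which is a welcome clarification rather than a divergence.
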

\begin{proof}
	By definition of $m(\al-\bt)$ and setting 
	$
	\s_l= {\rm sign}(\bal_{m_l}-\bbt_{m_l})\,,
	$
	we have
	\begin{eqnarray}\label{pula2}
	\sum_{i\in\Z} g(i) (\bal_i-\bbt_i)
	&=&
	g(0)(\bal_0-\bbt_0)+
	\sum_{l\geq 1} \s_l g(m_l)\,.
	\end{eqnarray}
	Hence
	\begin{eqnarray*}
		\sum_{i\in\Z} g(i) |\bal_i-\bbt_i|
		&=&
		g(0)|\bal_0-\bbt_0|+
		\sum_{l\geq 1} g(m_l)
		\\
		&\leq& 
		g(1)(\bal_0+\bbt_0)+
		2g(m_1)+
		\sum_{l\geq 3} g(m_l)
	\end{eqnarray*}
	and \eqref{pula} follows by
	\eqref{cappella} and \eqref{abbacchio}.
\end{proof}

By  \eqref{pula2}
\begin{equation}\label{somma sigma p}
\mic{0=} \sum_{i\in \Z}\pa{\bal_i-\bbt_i}i =  \sum_l \s_l m_l 
\end{equation}
and
\begin{equation} \label{somma sigma quadro}
{\sum_i{\pa{\bal_i-\bbt_i}i^2}}=
\sum_l \s_l m^2_l\,.
\end{equation}
Analogously 
\begin{equation}
\label{enne}
{\sum_i{\abs{\bal_i-\bbt_i}}}
= D+|\bal_0-\bbt_0|
\stackrel{\eqref{cappella}}\leq N\,.
\end{equation}
Finally note that 
\begin{equation}\label{gina}
\sigma_l\sigma_{l'} =-1\qquad \Longrightarrow\qquad m_l \neq m_{l'}\,.
\end{equation}

\begin{lemma}\label{orbo}
	Given $\bal\neq\bbt\in\N^\Z_f,$ such that $\pi(\bal-\bbt)=0$, 
 $N\ge 3,D\ge 1 $	and satisfying  \eqref{divisor}, we 
	have
	\begin{equation}
	\label{chiappechiare}
	\abs{m_1}\le  7\sum_{l\ge 3}\na_l^2\,. 
	\end{equation}
\end{lemma}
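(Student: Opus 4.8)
The plan is to convert the statement into a short arithmetic inequality about the signed exponents and then close it by a finite case check. Collect the available inputs: momentum conservation in the form \eqref{somma sigma p}, i.e. $\sum_l\sigma_l m_l=0$; the second--moment identity \eqref{somma sigma quadro}, $T:=\sum_i(\bal_i-\bbt_i)i^2=\sum_l\sigma_l m_l^2$; the hypothesis \eqref{divisor} combined with \eqref{enne}, giving $|T|\le 2N$; the trivial bound $N-2\le\sum_{l\ge 3}\na_l^2=:S$ (the list $\na$ has $N$ entries, each $\ge 1$), so in particular $S\ge 1$ because $N\ge 3$; and $|m_l|\le\na_l$ for every $l$, which is the entrywise domination \eqref{abbacchio} and yields $\sum_{l\ge 3}|m_l|^2\le S$. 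The goal thus becomes the bound $|m_1|\le 7S$.

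I would then pass to the distinct absolute values $h_1>h_2>\dots>h_k\ge 1$ occurring among the $m_l$ and set $a_j:=\bal_{h_j}-\bbt_{h_j}$, $b_j:=\bal_{-h_j}-\bbt_{-h_j}$, so that momentum reads $\sum_j(a_j-b_j)h_j=0$ and $T=\sum_j(a_j+b_j)h_j^2$. A first reduction: if the top value has multiplicity $|a_1|+|b_1|\ge 3$ then $h_1\le h_1^2\le\sum_{l\ge 3}|m_l|^2\le S$ and we are done, so we may assume $|a_1|+|b_1|\le 2$. The argument then splits according to whether the top index \emph{survives} or \emph{cancels} in the momentum identity. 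If it survives --- e.g. $|a_1-b_1|=2$, or (comparing $h_1$ against $h_2$ when both have multiplicity one) the net contributions of $h_1$ and $h_2$ to $\sum_l\sigma_l m_l$ have the same sign --- then, using only momentum together with $h_j\le h_j^2$, one bounds $h_1$ (respectively $h_1+h_2$) by $\sum_{j\ge 2}(|a_j|+|b_j|)h_j^2$, which equals $\sum_{l\ge 3}|m_l|^2$ up to at most one extra term $h_2^2$ that is itself absorbed by a sub-case, hence by $O(S)$. If instead the top index --- or the pair $h_1,h_2$ --- cancels in the momentum sum, I pass to the identity for $T$: either $h_1^2$ and $h_2^2$ enter $T$ with the same sign, so that $h_1\le h_1^2\le|T|+S\le 3S+4$; or they cancel there too, and then $(h_1-h_2)(h_1+h_2)=|h_1^2-h_2^2|\le|T|+S\le 3S+4$, which combined with $h_1-h_2\ge 1$ (the $h_j$ are \emph{distinct} integers) gives $h_1\le h_1+h_2\le 3S+4$. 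Every branch ends in a bound of the shape $h_1\le cS+c'$ with $cS+c'\le 7S$ for $S\ge 1$, which finishes the proof.

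The only delicate point is bookkeeping: enumerating the finitely many shapes $(a_1,b_1)$ and $(a_2,b_2)$ can take, using the symmetry $i\mapsto -i$ (which swaps $a_j\leftrightarrow b_j$) to normalise, and checking in each shape that the tail sum $\sum_{j\ge 2}(|a_j|+|b_j|)h_j^2$ (or $\sum_{j\ge 3}(\cdots)$) is exactly $\sum_{l\ge 3}|m_l|^2$ minus what has already been extracted at the top. There is no analytic difficulty; the genuinely essential use of the hypotheses occurs in the ``$h_1,h_2$ cancel in both the first and second moment'' branch, where one must exploit that $h_1$ and $h_2$ are distinct \emph{integers} --- so $h_1-h_2\ge 1$ --- in order to turn the bound on $h_1^2-h_2^2$ into a bound on $h_1+h_2$. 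This is the step I expect to be the crux.
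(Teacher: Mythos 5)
Your sketch is correct and uses essentially the same ingredients and case structure as the paper's proof: the first-moment (momentum) identity, the second-moment bound coming from \eqref{divisor} together with \eqref{enne}, the entrywise domination \eqref{abbacchio}, and the integrality of the exponents to pass from a bound on $m_1^2-m_2^2$ to one on $|m_1|+|m_2|$ in the double-cancellation case. The only difference is organizational — you parametrize by distinct absolute values and their multiplicities, while the paper splits on $D$ and on the sign $\s_1\s_2$ — and all of your branches close with constants $\le 7S$.
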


\begin{proof} 
The case $D=1$ is not compatible with momentum conservation.
	Let us now consider the case $D=2$,
	i.e.
	$$
	\bal-\bbt=\s_1 \be_{m_1}+\s_2 \be_{m_2}
	+(\bal_0-\bbt_0)\be_0\,.
	$$ 
If 
	$\s_1\s_2=-1$, momentum conservation imposes $m_1=m_2$  but this contradicts \eqref{gina}.
In the case 
	$\s_1\s_2=1,$ by momentum conservation we have $m_1= -m_2$.
	Then  conditions \eqref{divisor} and \eqref{enne} imply that 
	\[ 
	m_1^2 + m_2^2 \le2 (D +|\bal_0-\bbt_0|) \stackrel{\eqref{enne}}{\le } 2 N \le 6(N-2) \le 6 \sum_{l=3}^N \na_l^2
	\]
since $\na_l\ge 1$.

	Let us now consider the case $D \ge 3$. 
	By \eqref{divisor},\eqref{somma sigma quadro}
	and \eqref{enne}
	\begin{eqnarray*}
		m_1^2 +\s_1\s_2 m_2^2 
		&\le& 
		2(D+|\bal_0-\bbt_0|) + \sum_{l=3}^Dm_l^2
		\le
		2 N + \sum_{l=3}^Dm_l^2 
\le 
		2 N + \sum_{l=3}^N\na_l^2 
		{\le} 7\sum_{l=3}^N
		\na_l^2\,.
	\end{eqnarray*}
since (recall $N\ge 3$)
$ 2 N\le  6(N-2) \le  6\sum_{l=3}^N
\na_l^2$.

	If $\sigma_1\sigma_2 = 1$ then
	\[ 
	\abs{m_1}, \abs{m_2} \le \sqrt{7\sum_{l\ge 3} \na_l^2}.
	\]
	If  $\s_1\s_2 = -1$
	\[
	(|m_1|+|m_2|)(|m_1|-|m_2|)=
	m_1^2 - m_2^2 \le 
	7\sum_{l\ge 3} \na_l^2.
	\]
	Now, if $\abs{m_1}\ne \abs{m_2}$ then 
	\[
	\abs{m_1} + \abs{m_2} 
	\le 	7\sum_{l\ge 3} \na_l^2.
	\]
	Conversely, if $\abs{m_1} = \abs{m_2}$, by \eqref{gina}, $m_1\ne m_2$, hence $m_1 = - m_2$. By substituting this relation into \eqref{somma sigma p}, we have 
	\[
	2\abs{m_1} \le\sum_{l\ge 3}\abs{m_l} \le  \sum_{l\ge 3}\na_l^2\,,
	\]
	concluding the proof.
\end{proof}

Now the key to proving Lemma \ref{constance 2 gen} is the following.
\begin{lemma}\label{constance 2 gen}
	Consider $\bal,\bbt\in\cM$ with
	$\al\ne \bt$ and $ |\al|+|\bt| \ge 3$.
	If
	\eqref{divisor} holds 
	then
	for all $j$ such that $\bal_j+\bbt_j\neq 0$ one has
	\begin{equation}\label{adele}
		\sum_i\abs{\bal_i-\bbt_i}\jap{i}^{\theta/2} 
		\le 
		C_*
		\pa{\sum_i \pa{\bal_i+\bbt_i}\jap{i}^\teta- 2\jap{j}^\teta  }\,, \qquad C_*= \frac{7}{2-2^\theta}
	\end{equation} 
\end{lemma}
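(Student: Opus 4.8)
The plan is to sandwich both sides of \eqref{adele} between multiples of one and the same quantity, namely $\sum_{l\ge 3}\na_l^\theta$ with $\na=\na(\bal+\bbt)$, so that the constant $C_*=7/(2-2^\theta)$ emerges as the ratio of the two constants. The lower bound for the right-hand side is already available: it is exactly \eqref{stima1}, which states that $\sum_i\jap{i}^\theta(\bal_i+\bbt_i)-2\jap{j}^\theta\ge(2-2^\theta)\sum_{l\ge 3}\na_l^\theta$ for every $j$ with $\bal_j+\bbt_j\ne 0$, and whose proof rests only on Lemma \ref{constance generalbis} (hence on momentum conservation) together with $\jap{j}\le\na_1$. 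It therefore remains to show that $\sum_i\jap{i}^{\theta/2}|\bal_i-\bbt_i|\le 7\sum_{l\ge 3}\na_l^\theta$.

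For this upper bound I would apply Lemma \ref{mizza} with the even, non-negative weight $g(i):=\jap{i}^{\theta/2}$, which is non-decreasing on $\N$; since $\bal\ne\bbt$ this gives $\sum_i\jap{i}^{\theta/2}|\bal_i-\bbt_i|\le 2|m_1|^{\theta/2}+\sum_{l\ge 3}\na_l^{\theta/2}$, where $m=m(\bal-\bbt)$ and we used $m_1\ne 0$ together with $\na_l\ge 1$. Because $\na_l\ge 1$ and $\theta/2\le\theta$, the tail sum is bounded by $\sum_{l\ge 3}\na_l^\theta$. The remaining term $2|m_1|^{\theta/2}$ is precisely what Lemma \ref{orbo} is designed to control: under the standing hypothesis \eqref{divisor}, with $N=|\al|+|\bt|\ge 3$, it yields $|m_1|\le 7\sum_{l\ge 3}\na_l^2$; raising to the power $\theta/2<1$ and using subadditivity of $t\mapsto t^{\theta/2}$ gives $|m_1|^{\theta/2}\le 7^{\theta/2}\sum_{l\ge 3}(\na_l^2)^{\theta/2}=7^{\theta/2}\sum_{l\ge 3}\na_l^\theta$. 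Adding up and using $7^{\theta/2}\le 7^{1/2}<3$ for $0<\theta<1$, one obtains $\sum_i\jap{i}^{\theta/2}|\bal_i-\bbt_i|\le(2\cdot 7^{\theta/2}+1)\sum_{l\ge 3}\na_l^\theta\le 7\sum_{l\ge 3}\na_l^\theta$, and combining with \eqref{stima1} yields \eqref{adele}.

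The real content sits in Lemma \ref{orbo}, the estimate $|m_1|\le 7\sum_{l\ge 3}\na_l^2$, which is where the superlinear dispersion $\omega_j\sim j^2$ and the hypothesis \eqref{divisor} genuinely enter (its proof splits on the cardinality $D$ of $m$ and on the signs $\sigma_1,\sigma_2$, and uses momentum conservation to force $m_1=\pm m_2$ in the borderline small-$D$ cases); once that is granted, the present lemma reduces to the exponent bookkeeping above. The only subtlety in that bookkeeping is the degenerate case $D=0$, in which $\bal-\bbt$ is supported only at the index $0$ and $m_1$ is undefined: there $\sum_i\jap{i}^{\theta/2}|\bal_i-\bbt_i|=|\bal_0-\bbt_0|\le\bal_0+\bbt_0\le|\al|+|\bt|$ by \eqref{cappella}, while $\sum_{l\ge 3}\na_l^\theta\ge|\al|+|\bt|-2$ since each $\na_l\ge 1$, so \eqref{adele} follows from \eqref{stima1} and the elementary inequality $|\al|+|\bt|\le 7(|\al|+|\bt|-2)$, valid because $|\al|+|\bt|\ge 3$.
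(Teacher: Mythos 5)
Your proof is correct and follows the paper's argument essentially verbatim: the right-hand side is bounded below by $(2-2^\theta)\sum_{l\ge 3}\na_l^\theta$ via Lemma \ref{constance generalbis} (i.e.\ \eqref{stima1}), and the left-hand side is bounded above by $(2\cdot 7^{\theta/2}+1)\sum_{l\ge 3}\na_l^\theta<7\sum_{l\ge 3}\na_l^\theta$ via Lemma \ref{mizza}, Lemma \ref{orbo} and subadditivity of $t\mapsto t^{\theta/2}$. Your treatment of the degenerate case $D=0$ (bounding $|\bal_0-\bbt_0|\le N\le 7(N-2)\le 7\sum_{l\ge 3}\na_l^\theta$) is a slightly more compact but equally valid alternative to the paper's case split on $|\bal_0-\bbt_0|\le 7$ and on $j=0$ versus $j\ne 0$.
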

\begin{proof}
Let us first consider the case $D=0$, this means that $\al-\bt= (\al_0-\bt_0)\be_0$ and the left hand side of \eqref{adele} reads
$ |\al_0-\bt_0|$.  By \eqref{stima1} and $N\ge 3$ the right hand side of \eqref{adele} is at least $2-2^\theta$, 
so if  $|\al_0-\bt_0| \le 7 $ the result is trivial. Otherwise we have two cases, if $j=0$ 
\[
|\al_0-\bt_0| \le  2(|\al_0-\bt_0| -2\jap{j}^\theta) \le 2 \pa{\sum_i \pa{\bal_i+\bbt_i}\jap{i}^\teta- 2\jap{j}^\teta  }\,,
\]
Otherwise we remark that if $j\ne 0$, $\al_j+\bt_j\ne 0$ and $\al_j-\bt_j=0$, then $\al_j+\bt_j \ge 2$, then
\[
|\al_0-\bt_0| \le (\al_0+\bt_0) + (\al_j+\bt_j -2) \jap{j}^\theta \le \sum_i \pa{\bal_i+\bbt_i}\jap{i}^\teta- 2\jap{j}^\teta \,.
\]
Now we  consider indices $\al,\bt$ such that $N\ge 3,D\ge 1 $. Here we apply Lemma \ref{orbo}
	Given $\bal,\bbt\in\N^\Z_f,$ as above
	we consider $m=m(\bal-\bbt)$ and $\na=\na(\bal+\bbt).$	

	We have\footnote{Using that for $x,y\geq 0$ and $0\leq c\leq 1$
		we get $(x+y)^c\leq x^c+y^c.$} 
	\begin{eqnarray}\label{cosette}
	\sum_i\abs{\bal_i-\bbt_i}\jap{i}^{\theta/2} 
	&\stackrel{\eqref{pula}}\leq&
	2\abs{m_1}^{\frac{\teta}{2}} +  
	\sum_{l\ge 3} \na_l^{\frac{\teta}{2}}  
	\nonumber
	\\
	& \stackrel{\eqref{chiappechiare}}\le &
	2\pa{7 \sum_{l\ge 3} \na_l^2 }^{\frac{\teta}{2}} +
	\sum_{l\ge 3} \na_l^{\frac{\teta}{2}}   
	\nonumber
	\\
	& \le&
	+ 2(7)^{\frac{\teta}{2}}\sum_{l\ge 3}\na_l^\teta + \sum_{l\ge 3} \na_l^{\frac{\teta}{2}} 
	\nonumber
	\\
	& \le  & 
	\frac{2\sqrt{7}+1}{2-2^\theta}\pa{  (2-2^\teta)\pa{\sum_{l\ge 3}\na_l^\teta }}\,,
	\end{eqnarray}
	Then by Lemma \ref{constance generalbis}  and \eqref{cosette} we get
	\begin{eqnarray*}
		\sum_i\abs{\bal_i-\bbt_i}\jap{i}^{\theta/2}
		&\le &
		\frac{7 }{2-2^\teta}\pa{  \sum_i \jap{i}^\teta\pa{\bal_i +\bbt_i}  - 2\na_1^\teta}\\
		& \le &
		\frac{7 }{2-2^\teta} \sq{\sum_i \jap{i}^\teta\pa{\bal_i +\bbt_i} - 2\jap{j}^\teta}\,,
	\end{eqnarray*}
	proving \eqref{adele}.
\end{proof}
\begin{proof}[Conclusion of the proof of Lemma \ref{estim-cohom}]
 By applying Lemma \ref{constance 2 gen}, since $\omega\in \dg$ we get:
\begin{eqnarray}
&&\g\frac{e^{-\s\pa{\sum_i\jap{i}^\theta (\bal_i+\bbt_i) -2\jap{j}^\theta}}}{\abs{\omega\cdot{\pa{\bal - \bbt}}}}  
\nonumber \stackrel{\eqref{diofantinoBIS}}{\le} e^{-\s\pa{\sum_i\jap{i}^\theta (\bal_i+\bbt_i) -2\jap{j}^\theta}} \prod_i\pa{1+(\bal_i-\bbt_i)^{2}\jap{i}^{{2}}}
\\
&&
\stackrel{\eqref{adele}}{\le}
e^{-\frac{\s}{C_* }\sum_i\abs{\bal_i-\bbt_i}\jap{i}^{\frac{\teta}{2}}}\prod_i\pa{1+(\bal_i-\bbt_i)^{2}\jap{i}^{{2}}}
\nonumber
\\
&& \le  
\exp{\sum_i\sq{-\frac{\s}{C_*} \abs{\bal_i - \bbt_i}\jap{i}^{\frac{\teta}{2}} + \ln{\pa{1 + \pa{\bal_i - \bbt_i}^{2}\jap{i}^{{2}}}}}} 
\nonumber
\\
&& 
=  \exp{\sum_i f_i(\abs{\bal_i-\bbt_i})} 
\label{spa}
\end{eqnarray}
where, for $0<\s\leq 1$, $i\in \Z$ and $x\geq 0$, we defined
$$
f_i(x) := -\frac{\s}{C_*} x\jap{i}^{\frac{\teta}{2}} + \ln{\pa{1 + x^{2}\jap{i}^{{2}}}}\,.
$$
Finally, we have 
\begin{lemma}[\cite{BMP18}Lemma 7.2]\label{pajata}
	Setting
	$$
	i_\sharp:=
	\left(
	\frac{24C_*}{\s\theta}
	\ln
	\frac{12C_*}{\s\theta}
	\right)^{\frac2\theta}\,,
	$$
	we get 
	\begin{equation}\label{gricia}
	\sum_i f_i(|\ell_i|)\leq 
	18
	i_\sharp \ln i_\sharp
		\end{equation}
	for every $\ell\in\Z^\Z_f$.
\end{lemma}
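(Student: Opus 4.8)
Set, for a finitely supported $\ell\in\Z^\Z_f$, the quantity to estimate is $\sum_{i\in\Z}f_i(|\ell_i|)$ with $f_i(x)=-\tfrac{\s}{C_*}x\jap i^{\theta/2}+\ln(1+x^2\jap i^2)$. The plan is to bound each summand separately after splitting $\Z$ at the cutoff $\jap i=i_\sharp$: for $\jap i>i_\sharp$ I will show $f_i(|\ell_i|)\le0$, and for $\jap i\le i_\sharp$ I will bound $\sup_{x\ge0}f_i(x)$ by $\lesssim\ln i_\sharp$ and note there are at most $2i_\sharp+1$ such indices. Since $f_i(0)=0$, only the second regime contributes, and the sum is finite because $\ell$ has finite support.

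The one-variable input is the derivative estimate, valid for $x>0$ since $1+x^2\jap i^2\ge x^2\jap i^2$:
\[
f_i'(x)=-\frac{\s}{C_*}\jap i^{\theta/2}+\frac{2x\jap i^2}{1+x^2\jap i^2}\le-\frac{\s}{C_*}\jap i^{\theta/2}+\frac2x\,.
\]
Hence $f_i$ is strictly decreasing on $[x_i,\infty)$ with $x_i:=\tfrac{2C_*}{\s}\jap i^{-\theta/2}$, so, dropping the nonpositive linear term of $f_i$ at $x=x_i$,
\[
\sup_{x\ge0}f_i(x)\le\max\bigl(0,f_i(x_i)\bigr)\le\ln\!\bigl(1+x_i^2\jap i^2\bigr)=\ln\!\Bigl(1+\tfrac{4C_*^2}{\s^2}\jap i^{\,2-\theta}\Bigr).
\]
The other elementary ingredient is the standard implication: if $a\ge1$ and $v\ge2a\ln(2a)$, then $v\ge a\ln v$ (monotonicity of $v\mapsto v-a\ln v$ for $v\ge a$, plus checking the endpoint via $\ln w\le w$).

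For $\jap i>i_\sharp$: the explicit form of $i_\sharp$ gives $\jap i^{\theta/2}>i_\sharp^{\theta/2}=\tfrac{24C_*}{\s\theta}\ln\tfrac{12C_*}{\s\theta}\ge 24\,\tfrac{C_*}{\s}$, so $x_i<\tfrac1{12}<1$ and $f_i$ is strictly decreasing on $[1,\infty)$; with $f_i(0)=0$ this yields $f_i(|\ell_i|)\le\max(0,f_i(1))$. It then suffices to check $f_i(1)=-\tfrac{\s}{C_*}\jap i^{\theta/2}+\ln(1+\jap i^2)\le0$. With $v=\jap i^{\theta/2}$, bounding $\ln(1+\jap i^2)\le\ln2+\tfrac4\theta\ln v\le\tfrac8\theta\ln v$ (legitimate since $v$ is large), this reduces to $v\ge\tfrac{8C_*}{\s\theta}\ln v$, which follows from the implication above since $v>i_\sharp^{\theta/2}\ge\tfrac{16C_*}{\s\theta}\ln\tfrac{16C_*}{\s\theta}$ — this comparison is precisely what the constants $24$ and $12$ in the definition of $i_\sharp$ are tuned for. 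For $\jap i\le i_\sharp$: one has $\tfrac{C_*}{\s}\le\tfrac{24C_*}{\s\theta}\le i_\sharp^{\theta/2}$ (again from the definition), so in the displayed bound $\tfrac{4C_*^2}{\s^2}\jap i^{\,2-\theta}\le 4\,i_\sharp^{\theta}\,i_\sharp^{\,2-\theta}=4i_\sharp^2$, whence $\sup_{x\ge0}f_i(x)\le\ln(1+4i_\sharp^2)\le6\ln i_\sharp$. Summing over the at most $2i_\sharp+1\le 3i_\sharp$ indices with $\jap i\le i_\sharp$, and using $f_i(|\ell_i|)\le0$ otherwise, gives $\sum_i f_i(|\ell_i|)\le 3i_\sharp\cdot6\ln i_\sharp=18\,i_\sharp\ln i_\sharp$.

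I expect the only real work to be bookkeeping with constants: verifying that the precise value of $i_\sharp$ makes $f_i(1)\le0$ hold throughout the far range (matching $\tfrac{24C_*}{\s\theta}\ln\tfrac{12C_*}{\s\theta}$ against $2a\ln(2a)$ with $a=\tfrac{8C_*}{\s\theta}$, using $C_*=\tfrac{7}{2-2^\theta}\ge\tfrac72$ and $\s,\theta\le1$ to control all the auxiliary logarithms), and that the near-range count $3i_\sharp$ times the per-index bound $6\ln i_\sharp$ lands at exactly $18\,i_\sharp\ln i_\sharp$. Everything else is routine one-variable calculus.
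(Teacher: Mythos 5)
Your proof is correct and follows essentially the same route as the paper's: split the indices at $\jap{i}\sim i_\sharp$, show the far terms are nonpositive because the linear term $-\frac{\s}{C_*}x\jap{i}^{\theta/2}$ dominates the logarithm there, and bound the at most $3i_\sharp$ remaining terms by a uniform per-index maximum of order $\ln i_\sharp$ (the paper does the same with slightly different bookkeeping, first linearizing the logarithm and using two auxiliary thresholds $i_0,i_*$ below $i_\sharp$). The only imprecision is the intermediate inequality $\sup_{x\ge0}f_i(x)\le\max(0,f_i(x_i))$, which is not justified as stated since the interior maximum of $f_i$ need not lie at $x_i$; however, the bound $\sup_{x\ge0}f_i(x)\le\ln(1+x_i^2\jap{i}^2)$ that you actually use does follow from the two facts you invoke --- monotone decrease on $[x_i,\infty)$ and dropping the nonpositive linear term on $[0,x_i]$ --- so nothing downstream is affected.
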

\begin{proof}
		First of all we note that
	$$
	\sum_i f_i(|\ell_i|)=
	\sum_{i\ \text{s.t.} \ \ell_i\neq 0} f_i(|\ell_i|)
	$$
	since $f_i(0)=0.$
	We have that\footnote{Using that
		$\ln(1+y)\leq 1+\ln y$ for every $y\geq 1.$}
	$$
	f_i(x) \leq
	-\frac{\s}{C_* }\jap{i}^{\frac\theta 2}x + {2} \ln(x)+  2\ln \jap{i} +1\,,\qquad
	\forall\, x\geq 1\,.
	$$
Now,
	\begin{equation*}
		\max_{x\geq 1} \left( -\frac{\s}{C_* }\jap{i}^{\frac\theta 2}x + 2 \ln(x)\right)=
		\left\{
		\begin{array}{l} 
			\displaystyle -\frac{\s}{C_* }\jap{i}^{\frac\theta 2} 
			\qquad\qquad\qquad\ \,\qquad\text{if}\quad \jap{i}\geq i_0\,,
			\\
			\empty
			\\  
			\displaystyle
			-{2}+{2}\ln \frac{2C_* }{\s}-\teta \ln \jap{i}
			\qquad\text{if}\quad \jap{i}< i_0\,,
		\end{array}
		\right.
	\end{equation*}
	where 
	$$
	i_0:=\left(\frac{2C_* }{\s}\right)^{\frac 2 \theta}\,,
	$$
	since the maximum is achieved for 
	$x=1$ if $\jap{i}\geq i_0$ and 
	$x=\frac{2C_* }{\s \jap{i}^{\theta/2}}$
	if $\jap{i}< i_0$.
	Note that $i_0\geq e.$
	Then we get 
	\begin{eqnarray*}
		&& \sum_i f_i(|\ell_i|)
		=
		\sum_{i\ \text{s.t.} \ \ell_i\neq 0} f_i(|\ell_i|)
		\leq
		\\
		&& \sum_{\jap{i}\geq i_0\ \text{s.t.} \ \ell_i\neq 0}
		\left(
		2\ln \jap{i} +1
		-\frac{\s}{C_* }\jap{i}^{\frac\theta 2}
		\right)
		+
		\sum_{\jap{i}< i_0}
		\left(
		2\ln \frac{2C_* }{\s}+\Big(2
		-\teta\Big) \ln \jap{i}
		\right)\,.
	\end{eqnarray*}
	We immediately have that
	\begin{eqnarray*}
		&& \sum_{\jap{i}< i_0}
		\left(
		2\ln \frac{2C_* }{\s}+\Big({2}
		-\teta\Big) \ln \jap{i}
		\right)
		\leq 6 i_0 
		\left(
		\ln \frac{2C_* }{\s}+
		 \ln i_0
		\right)
		\\
		&&
		=
		6\left(1+ \frac{2}{\theta}\right)\left(\frac{2C_* }{\s}\right)^{\frac 2 \theta}
		\ln \frac{2C_* }{\s}\,.
	\end{eqnarray*}
	Moreover, in the case $\jap{i}\geq i_0\geq e,$
	$$
	2\ln \jap{i} +1
	-\frac{\s}{C_* }\jap{i}^{\frac\theta 2}
	\leq 
3\ln \jap{i}
	-\frac{\s}{C_* }\jap{i}^{\frac\theta 2} 
	=\frac{6}{\theta}\Big(
	\ln \jap{i}^{\frac\theta 2}-{2\frak C } \jap{i}^{\frac\theta 2}
	\Big)
	$$
	where 
	$$
	{\frak C }:=\frac{\theta\s(2-2^\theta)}{84 }<1
	\,.
	$$
	We have that\footnote{Using that, for every fixed
		$0<{\frak C} \leq 1,$ we have
		${\frak C} x\geq \ln x$ for every $x\geq
		\frac{2}{{\frak C}}\ln
		\frac{1}{{\frak C}} .$}
	$$
	\ln \jap{i}^{\frac\theta 2}-2{\frak C} \jap{i}^{\frac\theta 2}
	\leq -{\frak C} \jap{i}^{\frac\theta 2}\,,
	\qquad
	\text{when}\qquad
	\jap{i}\geq 
	i_*:=
	\left(\frac{2}{{\frak C}}\ln
	\frac{1}{{\frak C}}\right)^{\frac2\theta}
	\,.
	$$
	Note that
	$$
	i_\sharp
	\geq \max\{ i_0, i_*\}\,.
	$$
	Therefore
	\begin{eqnarray*} & &\sum_{\jap{i}\geq i_0\ \text{s.t.} \ \ell_i\neq 0}\pa{2\ln \jap{i} +1
			-\frac{\s}{C_* }\jap{i}^{\frac\theta 2}}
		\leq
		\sum_{\jap{i}\geq i_0\ \text{s.t.} \ \ell_i\neq 0}
		\frac{6}{\theta}\pa{
			\ln \jap{i}^{\frac\theta 2}-{2\frak C } \jap{i}^{\frac\theta 2}} \\
		&\leq&
		\frac{6}{\theta}
		\left(
		\sum_{\jap{i}<i_\sharp}
		\ln \jap{i}^{\frac\theta 2}
		-
		\sum_{\jap{i}\geq i_\sharp\ \text{s.t.} \ \ell_i\neq 0}
		\Big(
		{\frak C} \jap{i}^{\frac\theta 2}
		\Big)
		\right)
	\le  9  i_\sharp \ln i_\sharp
		\,.
	\end{eqnarray*}
	In conclusion we get
	\begin{eqnarray*}
		\sum_i f_i(|\ell_i|)
		&\leq&
		6\frac{2+\theta}\theta\left(\frac{2C_* }{\s}\right)^{\frac 2 \theta}
		\ln \frac{2C_* }{\s}
		+
	9
		 i_\sharp \ln i_\sharp
		\\
		&\leq&		
		9 \left(\frac{2C_* }{\s\theta }\right)^{\frac 2 \theta}
		\ln \pa{\frac{2C_* }{\s}}^{\frac{\theta}{2}}
	+ 9 	i_\sharp \ln i_\sharp  \le 18 	i_\sharp \ln i_\sharp
	\end{eqnarray*}
\end{proof}

The inequality \eqref{cavolfiore gevrey} follows from plugging \eqref{gricia}
into \eqref{spa} and evaluating the constant.
\end{proof}
\newcommand{\etalchar}[1]{$^{#1}$}
\def\cprime{$'$}

\end{document}